\newcommand{\Q}{\mathbb{Q}}
\newcommand{\R}{\mathbb{R}}
\newcommand{\C}{\mathbb{C}}
\newcommand{\Z}{\mathbb{Z}}
\newcommand{\HH}{\mathcal{H}}
\newcommand{\cl}{\operatorname{Cl}}
\newcommand{\FA}{F_\mathbf{A}}
\newcommand{\PGL}{\operatorname{PGL}}
\newcommand{\SL}{\operatorname{SL}}
\newcommand{\Bil}{\operatorname{Bil}}
\newcommand{\val}{\operatorname{val}}
\def\ll#1{{\left\langle{#1}\right\rangle}}
\def\id#1{{\mathfrak{#1}}}
\newcommand{\idn}[2][]{\norm(\mathfrak{#2}_{#1})}
\def\hatx#1{{{\widehat{#1}}^\times}}
\newcommand{\p}{_{v}}
\newcommand{\pp}{_{\id p}}
\newcommand{\q}{_{v}}
\newcommand{\pv}{{v}}
\newcommand{\bbu}{\mathbbm{1}}
\newcommand{\ssg}{_{\gamma}}
\newcommand{\sg}{_{\gamma}}
\newcommand{\ssl}{^{l,\chi}}
\newcommand{\tlS}{\vartheta^{l,\chi}}
\newcommand{\tlu}{\tlS}
\newcommand{\etalD}{\eta\ssl_{D,\id a}}
\newcommand{\psilD}{\psi^l_D}
\newcommand{\mba}{\mathbf{a}}
\newcommand{\mbA}{\mathbf{A}}
\newcommand{\mbf}{\mathbf{f}}
\newcommand{\mbk}{\mathbf{k}}
\newcommand{\mcE}{\mathcal{E}}
\newcommand{\piv}{\varpi_{v}}
\newcommand{\pip}{\piv}
\newcommand{\piq}{\piv}
\newcommand{\set}[2]{\left\{{#1} \, : \, {#2}\right\}}
\def\tlv#1{\widetilde{\mathcal{L}}_v^{#1}}
\newcommand{\tpmod}[1]{{\@displayfalse\pmod{#1}}} 
\newcommand{\normD}[1][D]{%
	\def\ArgI{\tlv{{#1}}}
    \normDRelay
}
\newcommand\normDRelay[1][\tphi_v]{
	\Abs{{#1}}_{\ArgI}
}
\newcommand{\tphi}{\widetilde{\varphi}}
\newcommand{\tpi}{{\widetilde{\pi}}}
\newcommand{\tpie}{\widetilde{\pi}^\epsilon}
\newcommand{\ut}[1]{\underline{\underline {#1}}}
\newcommand{\utv}[1]{\underline{\underline {{#1}_v}}}
\newcommand{\vii}{v \mid \cond{\chi}}
\def\smat#1{\left(\begin{smallmatrix} #1 \end{smallmatrix} \right)}
\def\pmat#1{\left(\begin{matrix} #1 \end{matrix} \right)}
\def\abs#1{\left\vert{#1}\right\vert}
\def\Abs#1{\left\Vert{#1}\right\Vert}
\def\num#1{\left\vert{#1}\right\vert}
\def\omegaN{\omega(\id N)}
\newcommand{\kro}[2]{\genfrac(){}{}{#1}{#2}}
\newcommand{\tkro}[2]{\genfrac(){}{1}{#1}{#2}}
\newcommand{\subp}[1][]{\ifthenelse{\equal{#1}{}}{_{\id p}}{_{\id{#1}}}}
\newcommand{\Mhku}{\Manti{4 \id{N}_\chi}}
\newcommand{\MkR}{\mathcal{M}_\mathbf{k}(R)}
\newcommand{\ckN}{c_{\mbk,\id N}}
\newcommand{\AL}[2][]{%
  \ifthenelse
    {\isempty{#1}}%
    {\varepsilon_{#2}}%
    {\varepsilon_{#2}(#1)}}
\DeclareMathOperator{\lcm}{lcm}
\DeclareMathOperator{\vardot}{\,\cdot\,}
\DeclareMathOperator{\norm}{\mathcal{N}}
\DeclareMathOperator{\trace}{\mathcal{T}}
\DeclareMathOperator{\disc}{\Delta}
\newcommand{\sgn}{\operatorname{sgn}}
\newcommand{\chic}[2][]{\chi_{#1}^{#2}}
\newcommand{\cond}[1]{\mathfrak{f}_{#1}}
\newcommand{\OO}[1][]{\mathcal{O}_{\mkern-2mu #1}}
\newcommand{\OOx}[1][]{\mathcal{O}_{\mkern-3mu #1}^{\times}}
\newcommand{\What}{\widehat{W}}
\newcommand{\hatOO}[1][]{\widehat{\mathcal{O}}_{\mkern-2mu #1}}
\newcommand{\hatOOx}[1][]{\hatx{\mathcal{O}}_{\mkern-3mu #1}}
\def\setminus{\smallsetminus}
\def\Fnc{F^\times \setminus (F^\times)^2}
\def\Fc{(F^\times)^2}
\def\dF{d_{\mkern-2mu F}}
\def\IF{\mathcal{J}_{\mkern-2mu F}}
\def\hF{h_{\mkern-2mu F}}
\def\tK{t_{\mkern-2mu K}}
\def\Tm{\mathop{T_{\mkern-1mu \id{m}}}}
\def\Tg{\mathop{T_{\mkern-2mu g}}}
\def\SN{\Sigma_{\id N}}
\numberwithin{equation}{section}
\theoremstyle{plain}
\newtheorem{prop}[equation]{Proposition}
\newtheorem{thm}[equation]{Theorem}
\newtheorem{thmA}{Theorem}
\newtheorem{thmB}{Theorem}
\newtheorem{lemma}[equation]{Lemma}
\theoremstyle{remark}
\newtheorem{rmk}[equation]{Remark}
\begin{document}

\title
[Construction of Hilbert modular forms of half-integral weight]
{Effective construction of Hilbert modular forms of half-integral
weight}

\author{Nicol\'as Sirolli}
\address{Universidad de Buenos Aires and IMAS-CONICET, Buenos Aires, Argentina}
\email{nsirolli@dm.uba.ar}

\author{Gonzalo Tornar\'ia}
\address{Universidad de la República, Montevideo, Uruguay}
\email{tornaria@cmat.edu.uy}

\begin{abstract}
	Given a Hilbert cuspidal newform $g$
    we construct a family of modular
    forms of half-integral weight whose Fourier coefficients give the
    central values of the twisted $L$-series of $g$ by
    fundamental discriminants.

    The family is parametrized by quadratic conditions on the primes
    dividing the level of $g$,
    where each form has
    coefficients supported on the discriminants satisfying the
    conditions.
	These modular forms are given as generalized theta series and thus their
	coefficients can be effectively computed.

	By considering skew-holomorphic forms of half-integral weight
	our construction works over arbitrary totally real number fields, except
	that in the case of odd degree the square levels are excluded.
	It includes all discriminants except those divisible by primes whose
	square divides the level.
\end{abstract}

\maketitle

\section{Introduction}

Let $g \in S_{2+2k}(N)$ be a normalized Hecke newform.
Answering to a question posed by Shimura in \cite{shim-halfint},
Waldspurger
showed that the
Fourier coefficients of modular forms of half-integral weight
in Shimura correspondence with $g$ are related to
the central values of the $L$-series of $g$ twisted
by quadratic characters \cite{waldspu-coeffs}.

\medskip

This result was made explicit by Kohnen and Zagier
\cite{kohnen-zagier, kohnen_fourier} 
for odd and square-free $N$.
More precisely, there exists a nonzero modular form
$f = \sum_n \lambda(n;f) q^n$
of weight $3/2+k$ and level $4N$
satisfying
\begin{equation*}
	L\left(1/2,g\otimes \chic{D}\right) = 
	\ll{g,g} \,
	\frac{\pi^{k+1}}{k!\,2^{\omega(N)}}
	\frac{1}{\abs{D}^{k+1/2}}
	\frac{\abs{\lambda(\abs{D};f)}^2}{\ll{f,f}}
\end{equation*}
for every fundamental discriminant $D$ such that
$\kro{D}{v}=\AL[v]{g}$ for all places $v$ in
$\Sigma_N=\set{v}{v\mid N}\cup\{\infty\}$.
Here $\chic{D}$ is the quadratic character induced by $D$ and
$\AL[v]{g}$ are the eigenvalues of the Atkin-Lehner involutions
acting on $g$, with
$\AL[\infty]{g}=(-1)^{k+1}$ and
$\kro{D}{\infty}=\sgn(D)$.
The $L$-series is
normalized so that it has its center of symmetry at $s=1/2$,
$\ll{g,g}$ and $\ll{f,f}$ are the Petersson inner products,
and $\omega(N)$ denotes the number of prime divisors of $N$.
Up to scalar $f$ is the unique modular form
corresponding to $g$ in the Kohnen subspace.

\medskip
Later on Baruch and Mao \cite[Thm. 10.1]{baruch-mao}
removed the quadratic restrictions on $D$. Still assuming $N$ to be odd and
square-free they show that,
given a function $\gamma:\Sigma_N\to\{\pm1\}$ with $\AL{g,\gamma}=1$
(see~\eqref{eqn:signo_de_gamma} below),
there exists a nonzero modular form $f_\gamma$ of weight $3/2+k$
satisfying
\begin{equation}\label{eqn:classic}
	L\left(1/2,g\otimes \chic{D}\right)
	=
	2^{\omega(D,N)}
	c\sg \,
	\ll{g,g} \,
	\frac{\pi^{k+1}}{k!\,2^{\omega(N)}}
	\frac1{\abs{D}^{k+1/2}}
	\frac{\abs{\lambda(\abs{D};f\sg)}^2}{\ll{f\sg,f\sg}}
\end{equation}
for every fundamental discriminant $D$ \emph{of type $\gamma$},
i.e., such that for each
$v\in\Sigma_N$ either $\kro Dv=\gamma(v)$, or $\kro Dv=0$ and
$\gamma(v)=\AL[v]{g}$.
Here $c\sg$ is an explicit positive rational constant
and $\omega(D,N)$ is the number of primes dividing both $D$ and $N$.
The modular form $f\sg$ 
{has level $4NN'$, where $N'$ is the
product of the primes in the set
$\set{v\mid N}{\gamma(v)\neq\AL[v]{g}}$.}

\bigskip

The main theorem of this article extends these results to all levels
except perfect squares,
and to Hilbert modular forms over an arbitrary totally real number
field.

Fix a totally real number field $F$ and
let $g$ be a normalized Hilbert cuspidal newform over $F$ of level
$\id N$, weight $\mathbf{2}$ + 2$\mathbf{k}$
and trivial central character.
The only hypothesis we make on the level is the following:
\begin{enumerate}[label=\textbf{H$\id N$}.,ref={\textbf{H$\id N$}},
                  labelindent=\parindent,leftmargin=*,
                  topsep=\bigskipamount
                  ]
	\item If $[F:\Q]$ is odd then $\id N$ is not a square,
		and if $\mbk\neq\mathbf0$ then $\id N$ is nontrivial.
		\label{hyp:HN}
\end{enumerate}

Let $\SN = \set{v}{v \mid \id N}\cup\mba$, where
$\mathbf a$ is the set of infinite places of $F$.
Given a function $\gamma: \SN \to \{\pm1\}$
we say that $D \in F^\times$ is \emph{of type} $\gamma$ (with respect to $g$) if
for all $v \in \SN$
\begin{equation*}
	\kro Dv =
	\begin{cases}
		\gamma(v) \text{ or } 0 &
		\text{when $\val_v(\id N) = 1$, $v$ is odd and $\gamma(v) = \AL[v]{g}$,} \\
		\gamma(v) & \text{otherwise}.
	\end{cases}
\end{equation*}
Note that for odd and square-free $\id N$ every $D \in F^\times$ is of type
$\gamma$ for some function $\gamma$.
In general this requires that
$\kro{D}{v}\neq 0$ when $\val_v(\id N)>1$
or $v\mid\gcd(\id N,2)$.

For $D$ of type $\gamma$, the sign of the functional
equation of $L(s,g\otimes\chic{D})$ is given by
\begin{equation}\label{eqn:signo_de_gamma}
	\AL{g,\gamma}
	=
	\prod_{v \in \SN}
	\AL[v]{g}\,
	\gamma(v)^{\val_v(\id N)}
	\,,
\end{equation}
where for convenience we let $\val_v(\id N)=1$ for $v\in\mba$.
In particular, if $\AL{g,\gamma} = -1$ then
$L(1/2,g\otimes\chic{D}) = 0$ for every $D$ of type $\gamma$.

\begin{thmA}\label{thm:main}
Assume $\AL{g,\gamma}=1$.
	There exists a nonzero skew-holomorphic Hilbert cuspidal form $f\sg$ of weight $\mathbf{3/2+k}$
whose Fourier coefficients $\lambda(D,\id a;f\sg)$ are effectively
computable and satisfy
\[
	L\left(1/2,g\otimes \chic{D}\right)
	=
	2^{\omega(D,\id N)} \, 
	c_{g,\gamma} \,
	\ll{g,g} \,
	\frac{\idn a}{\abs{D}^\mathbf{k+1/2}} \,
	\frac{\abs{ \lambda(D,\id a;f\sg) }^2}{\ll{f\sg,f\sg}}
\]
for every $D \in F^\times$ of type $\gamma$.
{The level of $f\sg$ is $4\id N\id N'\id N''$, where $\id N'$ is the
product of the primes in the set $\set{v\parallel\id
N}{\gamma(v)\neq\AL[v]{g}}$
and $\id N''\mid 4\OO$; in fact $\id N''$ is trivial for odd $\id N$.}
\par
Here $\id a$ is the unique ideal such that $(D,\id a)$ is a
fundamental discriminant,
$c_{g,\gamma}$ is given in $\eqref{eqn:c_gamma}$
and $\omega(D,\id N)$ denotes the number of prime ideals dividing both $\id N$
and the conductor of $F(\sqrt D)/F$.

\end{thmA}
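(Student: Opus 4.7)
The plan is to produce $f_\gamma$ via the explicit theta correspondence, following the Shimizu--Jacquet--Langlands--Waldspurger pipeline, and then extract the constant $c_{g,\gamma}$ by a purely local computation. First, let $B$ be the (unique) quaternion algebra over $F$ ramified exactly at the places $v\in\SN$ where $\AL[v]{g}\,\gamma(v)^{\val_v(\id N)}=-1$; the hypothesis $\AL{g,\gamma}=1$ together with \ref{hyp:HN} (so that the set of ramified places has even cardinality, and odd-degree fields with square level are excluded) guarantees that such a $B$ exists. By Jacquet--Langlands, $g$ transfers to an automorphic form $\phi$ on $B^\times\backslash B^\times_\mathbf{A}$; choose a level structure given by an order $R\subset B$ whose local components are Eichler of the appropriate level at split primes $v\nmid\gcd(\id N,2)$ with $\val_v(\id N)\le 1$, and are carefully tailored special orders at the remaining primes (those with $\val_v(\id N)\ge 2$ or $v\mid 2$). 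The selection of these orders at the ``bad'' primes, and the verification that the resulting local vector is a test vector for the appropriate local functional, is the technical heart of the construction.

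Next, define $f_\gamma$ as the generalized theta series attached to $\phi$: one integrates $\phi$ against a theta kernel arising from the Weil representation on the trace-zero subspace $B^0$ (a ternary quadratic space), with a suitably chosen Schwartz function factoring as $\prod_v \varphi_v$. Concretely, $f_\gamma(z,\id a)$ will be a linear combination, indexed by a class group, of theta series of ternary quadratic forms coming from the chosen local orders; this makes its Fourier coefficients $\lambda(D,\id a;f_\gamma)$ effectively computable as weighted representation numbers, which is the effectivity claim. The level $4\id N\id N'\id N''$ and the character fall out of the local archimedean and non-archimedean transforms of $\varphi_v$ under the metaplectic action.

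To obtain the explicit identity, apply Waldspurger's formula in the form that relates $|\lambda(D,\id a;f_\gamma)|^2$ to the square of the toric period integral $\int \phi\cdot \chic D$ of $\phi$ over the torus embedded in $B^\times$ corresponding to $K=F(\sqrt D)$, and then apply the Waldspurger/Ichino--Ikeda style central value formula to write this period squared as a product of $L(1/2,g\otimes\chic D)$ times an explicit ratio of Petersson norms times a product of local integrals $\alpha_v$. For $D$ of type $\gamma$, the torus embedding exists at every place (this is exactly the content of the type-$\gamma$ condition and dictates why only these $D$ contribute), and each $\alpha_v$ becomes an explicit rational number depending only on $v$, $\val_v(\id N)$, $\gamma(v)$ and $\val_v(D)$; taking the product yields the factor $2^{\omega(D,\id N)} c_{g,\gamma}\,\idn{a}/|D|^{\mathbf{k+1/2}}$.

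The main obstacle will be the local computation at the ``hard'' primes: those $v$ with $\val_v(\id N)\ge 2$, those dividing $2$, and the ramified archimedean places. At these places one must simultaneously choose the local order (hence the local new-vector $\phi_v$ on $B_v^\times$) and the local Schwartz function $\varphi_v$ so that (i) $\varphi_v$ is an eigenvector under the local metaplectic action of the intended level and character, (ii) the resulting local toric integral $\alpha_v$ is nonzero for every $D$ of type $\gamma$ and matches the claimed rational constant, and (iii) the type-$\gamma$ restriction on $\kro{D}{v}$ emerges naturally as the support of $\alpha_v$. Handling $v\mid 2$ in arbitrary totally real $F$, where no clean Kohnen-style plus-space description exists, and keeping track of the precise factor $\id N''\mid 4\OO$ in the level, is where most of the work will concentrate.
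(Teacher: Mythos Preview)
Your proposal has a genuine gap in the choice of quaternion algebra, and it misses the paper's key maneuver.

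You take $B$ to be ramified exactly at the $v\in\SN$ with $\AL[v]{g}\,\gamma(v)^{\val_v(\id N)}=-1$. At an archimedean place $v$ this reads $(-1)^{k_v+1}\gamma(v)=-1$, i.e.\ $\gamma(v)=(-1)^{k_v}$; for any other choice of $\gamma(v)$ the algebra is \emph{split} at $v$. Hence your $B$ is in general not totally definite. In the indefinite case the Jacquet--Langlands transfer $\phi$ lives on a Shimura variety rather than on a finite double coset space, the ``theta series'' you write down is no longer a finite sum over lattice points, and the effective computability of $\lambda(D,\id a;f_\gamma)$ --- an essential part of the statement --- evaporates. (Your invocation of \ref{hyp:HN} here is also misplaced: the parity of your ramification set follows from $\AL{g,\gamma}=1$, while \ref{hyp:HN} is what guarantees existence of a \emph{totally definite} algebra carrying a locally residually unramified order of discriminant $\id N$, which is precisely what you are not using.)

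The paper's device to get around this is to fix once and for all a totally definite $B$ and a locally residually unramified order $R$ of discriminant $\id N$ (independent of $\gamma$), and then to introduce an auxiliary twisting parameter $l\in F^\times$ and a Hecke character $\chi$ satisfying \ref{hyp:Hl} and \ref{hyp:Hchi}. These choices are encoded in explicit \emph{weight functions} $w_v$ inserted into the ternary theta series; the role of $l$ and $\chi$ is exactly to mediate between the fixed $B$ and the desired $\gamma$. The proof then proceeds in two steps. First one proves Theorem~\ref{thm:formula}, a formula for the product $L(1/2,g\otimes\chic{l})\,L(1/2,g\otimes\chic{D})$ in terms of the coefficients of $\tlS(\varphi_g)$; the input here is not a raw Waldspurger/Ichino--Ikeda computation but the Gross--Zagier--type results of Zhang and Xue relating heights of special points to Rankin--Selberg central values, together with a separate argument (via Baruch--Mao's local analysis of Whittaker functionals) to remove the coprimality restrictions on $\cond D$. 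Second, one deduces Theorem~\ref{thm:main} by choosing $l$ so that $L(1/2,g\otimes\chic{l})\neq 0$ (possible by Waldspurger's nonvanishing theorem since $\AL{g,\gamma}=1$ forces the right sign), setting $f_\gamma=\tlS(\varphi_g)$, and absorbing all the $l$-dependent factors into $c_{g,\gamma}$. Your proposal omits this auxiliary $l$ entirely, which is both what makes the algebra totally definite and what makes the constant $c_{g,\gamma}$ come out in closed form.
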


For full generality on the field $F$
it is essential to consider skew-holomorphic
Hilbert modular forms:
see Sect.~\ref{sect:hilbert_half_integral} for the definition.
Only when $\hF=\hF^+$ it is possible to state Theorem~\ref{thm:main} using
holomorphic forms (see Remark \ref{rmk:uholo}).

\begin{rmk}

Those $D \in F^\times$ for which there is $v \mid \id N$ with $\kro Dv = 0$
such that $\val_v(\id N) > 1$ or $v$ is even are not covered by any of the
types we have considered.

When $\val_v(\id N) = 1$ we expect that our construction covers
discriminants with $\kro Dv = 0$ also in the case of even $v$.
This would follow by extending Lemma \ref{lem:edspecial} to
those places; see Remark \ref{rmk:even}.
We have verified this experimentally in some cases.

When $\val_v(\id N) > 1$ and $\kro Dv = 0$ our present framework cannot be used
since by Proposition \ref{prop:kohnen} the
fundamental discriminants on which our theta series are supported satisfy that
$\kro Dv\neq0$.
In order to cover these discriminants a different construction is needed.
\end{rmk}
\bigskip

The starting point for us \cite{gross}, where Gross 
gives the form $f\sg$ combining linearly ternary theta series,
according to the quaternionic modular form in Jacquet-Langlands
correspondence with $g$.
He considers the case of rational prime level and weight $2$, and restricts to
twists by negative discriminants (i.e., the case when $\gamma(\infty) = -1$).
His construction yields a nonzero form if and only if $L(1/2,g) \neq 0$.

The construction of Gross was extended by the authors to the setting of Hilbert
modular forms of general level in \cite{waldspu}, assuming, among other
restrictions, that $L(1/2,g) \neq 0$.
Some of these constraints were removed in \cite{w2}, by using theta series with
weight functions (of type I).
Nevertheless, the construction in \cite{w2} is still not completely general,
since it restricts the types $\gamma$ under consideration;
for example, in the case of odd $\val_v(\id N)$ only $\gamma(v) = \AL[v]{g}$ was
allowed (see Remark~\ref{rmk:gamma_rest}).
Moreover, the existence of units of arbitrary signatures was required to cover
arbitrary types at the infinite places.

In this article we remove these last restrictions introducing two new
ingredients: weight functions of \emph{type II}
and skew-holomorphic modular forms.

Weight functions were introduced, in the rational case, by \cite{mrvt} for
odd prime levels and weight $2$; see also \cite{tornatesis}.
In \cite{pt_composite} there are given examples for
odd composite levels.
They appear as well in a particular example ($N=11$) in
the final section of \cite{baruch-mao}, without a detailed discussion.
Prior to Theorem~\ref{thm:main} these generalized theta series were shown to
satisfy \eqref{eqn:classic} only in the case of rational prime levels
and weight $2$: see \cite{mao}.

Our construction of $f\sg$ is of a global nature,
and its Fourier coefficients can be effectively computed.
On the other hand, though their adelic results are general, the modular
form in \cite[Thm. 10.1]{baruch-mao} is given as a vector in an automorphic representation,
specifying its local components;
this is done only for odd and square-free levels.

\bigskip

We now sketch the construction of $f\sg$ and the proof of
Theorem~\ref{thm:main},
leaving the details for the body of the article.
Fix an order $R$ with discriminant $\id N$ and Eichler invariants
$e(R_v)\neq0$ for $v \mid \id N$, in a totally definite quaternion algebra
over $F$.
Such orders exist due to \ref{hyp:HN}.
Let $\varphi_g$ be the quaternionic modular form in
Jacquet-Langlands correspondence with $g$.

We choose an auxiliary parameter $l \in F^\times$ and a Hecke character $\chi$
such that
\begin{enumerate}[label=\textbf{H$l$}.,ref={\textbf{H$l$}},
                  labelindent=\parindent,leftmargin=*,
                  topsep=\medskipamount
                  ]
	\item $\kro lv = \gamma(v)\,e(R_v)$ for all $v\in\SN$,
		and $\kro lv \neq 0$ for all $v \mid 2$.
		\label{hyp:Hl}
\end{enumerate}
\begin{enumerate}[label=\textbf{H$\chi$}.,ref={\textbf{H$\chi$}},
				   labelindent=\parindent,leftmargin=*,
				   topsep=\medskipamount
				   ]
		   \item For finite $v$ the character $\chi_v$ is unramified if $v
			   \nmid \id N$ or $\gamma(v)^{\val_v(\id N)} = \AL[v]{g}$,
		 and is odd otherwise.
		 \label{hyp:Hchi}
\end{enumerate}
Then we define an explicit Hecke linear map $\tlS$
from the space of
quaternionic modular forms of level $R$ and weight $\mbk$ to a space of modular
forms of weight $\mathbf{3/2+k}$,
level $4\lcm(\id N,\cond\chi{}^{\!\!2})$ and character $\chi$,
whose image consists on linear combinations of
generalized theta series of the form
\begin{equation*}
	\sum_{y\in L} w(y) \, P(y)
	\, \exp_F(\disc(y)/l, z/2)\,.
\end{equation*}
Here $L$ is a ternary lattice in a quaternion space with
$\Delta(y)=(y-\overline{y})^2$ as the quadratic form,
$P$ is an (archimedean) harmonic polynomial and $w$ is a (non
archimedean) weight function depending on $l$ and $\chi$.
Finally, $\exp_F$ is the exponential map associated to $F$.

For $D \in F^\times$ denote by
$L_{l,D}(s,g) = L(s,g\otimes \chic{l}) \,L(s,g\otimes \chic{D})$ 
the Rankin--Selberg convolution \textit{L}-function of $g$ by the genus character
associated to the pair $(l,D)$.
\begin{thmB}\label{thm:formula}
If $l$ and $\chi$ satisfy \ref{hyp:Hl} and \ref{hyp:Hchi} then
\begin{equation*}
	L_{l,D}(1/2,g)
	=
	2^{\omega(D,\id N)} \,
	\ckN \,
	\ll{g,g}
	\,
	\frac{\idn{a/b}}{\abs{lD}^\mathbf{k+1/2}}
	\,
	\frac{\abs{ \lambda(D,\id a;\tlS(\varphi_g))}^2}
	     {\ll{\varphi_g,\varphi_g}}
\end{equation*}
for every $D \in F^\times$ of type $\gamma$.
\par
Here $\id a,\id b$
are the unique ideals such that $(D,\id a)$ and $(l,\id b)$ are
fundamental discriminants
and $\ckN$ is a positive constant given in \eqref{eqn:ckn}.
\end{thmB}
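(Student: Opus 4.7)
My plan is to compute $\lambda(D,\id a;\tlS(\varphi_g))$ directly from the generalized-theta-series description of $\tlS$, recast the resulting finite sum as a toric period of $\varphi_g$ weighted by the genus character attached to the pair $(l,D)$, and then invoke the explicit Waldspurger formula in the totally definite quaternionic setting (in the form going back to Gross) to equate its square with $L_{l,D}(1/2,g)$ up to explicit local constants.

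First, unfolding
\[
\tlS(\varphi_g)(z) \;=\; \sum_{y\in L} w(y)\,P(y)\,\exp_F(\disc(y)/l,\,z/2),
\]
the $(D,\id a)$-Fourier coefficient becomes a finite sum over orbits of $y\in L$ with $\disc(y)/l=D$ and the correct ideal content. By Proposition~\ref{prop:kohnen} the weight function $w$ is supported exactly on those $y$ whose associated quadratic order is the maximal order $\OO[K]$ of $K:=F(\sqrt{lD})$, and hypotheses \ref{hyp:Hl}--\ref{hyp:Hchi} ensure that the type $\gamma$ condition on $\kro{D}{v}$ is precisely what is needed for $w_v$ to be nonzero on the appropriate local orbits. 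Via the Deuring correspondence these $y$ are in bijection with optimal embeddings $\OO[K]\hookrightarrow R$ modulo $R^\times$-conjugation.

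Second, strong approximation parametrizes these embeddings by $\cl(\OO[K])$ acting on the class set of $R$, so that one obtains
\[
\lambda(D,\id a;\tlS(\varphi_g)) \;=\; C_{\mathrm{loc}}(D) \cdot \sum_{[A]\in\cl(\OO[K])} \chi_{l,D}([A])\,\varphi_g(A\cdot x_0),
\]
where $x_0$ is a fixed CM point, $\chi_{l,D}$ is the genus character of $\cl(\OO[K])$ associated with the decomposition of the discriminant of $K/F$ as the product of the fundamental discriminants $(l,\id b)$ and $(D,\id a)$, and $C_{\mathrm{loc}}(D)$ collects the local factors of $w$ together with the archimedean contribution of $P$. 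The explicit Gross--Waldspurger formula then yields
\[
\Bigl| \sum_{[A]\in\cl(\OO[K])} \chi_{l,D}([A])\,\varphi_g(A\cdot x_0) \Bigr|^2 \;=\; C(K)\cdot\ll{\varphi_g,\varphi_g}\cdot L(1/2,g\otimes\chi_{l,D}),
\]
and the standard factorization $L(s,g\otimes\chi_{l,D}) = L(s,g\otimes\chic{l})\,L(s,g\otimes\chic{D}) = L_{l,D}(s,g)$ delivers the desired $L$-value. Assembling $\abs{C_{\mathrm{loc}}(D)}^2\cdot C(K)$, the factor $\abs{lD}^{\mathbf{k+1/2}}$, and the quotient $\idn{a/b}$ produces the shape of the claimed identity.

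The hard part is the local matching at the primes $v\mid\id N$, where $w_v$ has been designed from $\chi_v$ and $l$ precisely to single out the correct local test vector for the local Waldspurger integral. Each case allowed by \ref{hyp:Hl}--\ref{hyp:Hchi} (ramified vs.\ unramified $\chi_v$; $\val_v(\id N)=1$ vs.\ $>1$; $\kro{D}{v}=\gamma(v)$ vs.\ $0$) must be verified to contribute the expected local factor so that the product collapses exactly to $2^{\omega(D,\id N)}\,\ckN$, with $\ckN$ as in \eqref{eqn:ckn}. The factor $2^{\omega(D,\id N)}$ itself emerges from the doubling of local orbits at primes where $\kro{D}{v}=0$, which by the definition of type $\gamma$ forces $\val_v(\id N)=1$ and $\gamma(v)=\AL[v]{g}$. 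The even places, and the comparison between the adelic measures and the discrete counting implicit in the ternary sum, are the most delicate pieces and form the computational heart of the argument.
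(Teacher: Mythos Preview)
Your broad strategy---expressing $\lambda(D,\id a;\tlS(\varphi_g))$ as a pairing $\ll{\varphi_g,\etalD}$ with a quaternionic form built from special points, identifying this with a toric period for the genus character, and then invoking an explicit Waldspurger/Zhang/Xue formula---is indeed the backbone of the paper's argument. However, there is a genuine gap in your plan, and the paper's proof is structured specifically to bridge it.

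The explicit height formula you want to invoke (Theorem~\ref{thm:xue}, due to Zhang and Xue) requires that $\cond\Delta$ be prime to $2\id N$ and that $\cond l$ and $\cond D$ be coprime. In particular it does \emph{not} apply when $\kro Dv=0$ for some $v\mid\id N$. Your proposal to handle these primes by a direct local matching of weight functions against local Waldspurger integrals is not something one can simply carry out: no such explicit formula is available in that generality, and the paper does not attempt it. Instead, the paper proves Theorem~\ref{thm:formula} in two steps. First, for $D$ with $\cond D$ prime to $2\cond l\id N$ (so $\omega(D,\id N)=0$), it combines Proposition~\ref{prop:coef_serie_theta}, Proposition~\ref{prop:eta=proy_psi} (which gives $\ll{\varphi_g,\etalD}=2^{\omega(\id N)}\ll{\varphi_g,\psilD}$ via the $\Bil(R)$-action and Lemma~\ref{lem:sect5}), and Theorem~\ref{thm:xue}. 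Second, it removes the restriction on $\cond D$ via Proposition~\ref{prop:bm}, a Baruch--Mao type comparison of $\abs{\lambda(D_i,\id a_i;f)}^2 / L(1/2,g\otimes\chic{D_i})$ for two discriminants of type $\gamma$. Proving Proposition~\ref{prop:bm} occupies Sections~\ref{sect:shimura}--\ref{sect:extending}: one must identify the automorphic representation generated by $f$ with $\tpie$, and then control the local Whittaker functionals at every place using results of Hiraga--Ikeda and Baruch--Mao.

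Two further points. Your claim that $w$ is supported exactly on $y$ giving maximal-order embeddings is not what Proposition~\ref{prop:kohnen} says; rather, it constrains the discriminants that can appear. And the factor $2^{\omega(D,\id N)}$ does not arise from ``doubling of local orbits'' in the theta sum: in the restricted first step $\omega(D,\id N)=0$, and the factor enters only through the comparison in Proposition~\ref{prop:bm}, where $e_v(D)=2$ precisely at the primes $v\mid\gcd(\cond D,\id N)$.
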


Theorem \ref{thm:main} follows from Theorem \ref{thm:formula}
by choosing $l$ and $\chi$ with $L(1/2,g\otimes\chi^l) \neq 0$ and
$\chi$ of minimal conductor, and letting
$f\ssg = \tlu(\varphi_g)$ (see Sect.~\ref{sect:formula} for the details).
The proof of Theorem~\ref{thm:formula} relies on our previous articles
\cite{waldspu,w2}.
It is based on the results from
\cite{zhang-gl2,Xue-Rankin} giving central values in terms of height parings,
which we relate to Fourier coefficients.

\bigskip

This article is organized as follows.
In Sects.~\ref{sect:hilbert_half_integral} and \ref{sect:quat_forms}
we recall the basic definitions and facts regarding 
skew-holomorphic Hilbert modular forms of half-integral weight
and quaternionic modular forms.
In Sect.~\ref{sect:construction} we introduce the weight functions
and use them to define our generalized theta series.
Then in Sect.~\ref{sect:modularity} we prove the modularity of this
construction and its Hecke linearity.
In Sect.~\ref{sect:formula} we give a proof of Theorem \ref{thm:formula}.
This proof, in principle, imposes certain restrictions on the
discriminants.
The goal of Sects.~\ref{sect:shimura} and \ref{sect:extending} is to prove
Proposition \ref{prop:bmw}, which is used to remove these restrictions.
For this it suffices to prove, \emph{a posteriori}, that our construction
agrees locally
with the vectors considered by Baruch and Mao
for $v\nmid\id N$ and for $v\parallel\id N$ and $v$ odd.

In Sect.~\ref{sect:Q} we describe our results in the case $F = \Q$.
Finally, using Sagemath \cite{sage} and Magma \cite{magma}
we illustrate our results by computing the families of modular forms $f_\gamma$ in
many different situations.
These can be found in \cite{code}.
One of these examples is described in Sect.~\ref{sect:example}.

\subsection{Notation summary}

We fix a totally real number field $F$ of discriminant $\dF$
with ring of integers $\OO$ and different ideal $\id{d}$.
We denote by $\FA$ the ring of adeles of $F$ and
by $\IF$ the group of fractional ideals of $F$. We write $\cl(F)$
for the class group, $C_F$ for the idele class group and $\hF$ for the class 
number.
We denote by $\mba$ the set of embeddings $v : F \hookrightarrow \R$
and by $\mathbf{f}$ the set of nonzero prime ideals $v$ of $F$.
For $\xi \in F^\times$ we let $\sgn(\xi) = \prod_{v \in \mba} \sgn(\xi_v)$.
We denote $F^+ = \set{\xi \in F^\times}{\sgn(\xi_v) = 1\quad\forall v\,\in \mba}$.
Given $\mathbf{k} = (k_v)\in \Z^\mba$ and $\xi \in F$
we let $\xi^\mathbf{k} = \prod_{v\in \mba} \xi_v^{k_v}$
and
$\abs{\xi}^\mathbf{k+1/2} = \prod_{v\in \mba} \abs{\xi_v}^{k_v+1/2}$.
We use $\pv$
as a subindex to denote completions of global objects at $\pv$, as well as to
denote local objects.
Given $v \in \mbf$ we denote by
$\pip\in\OO[v]$ a local uniformizer and by
$\val_v$ the $v$-adic valuation.

We denote by $\widehat F = \prod'_{v \in \mbf} F\p$ and
$\hatx F = \prod'_{v \in \mbf} F\p^\times$ the corresponding restricted
products, and we use the same notation in other contexts.
For convenience, given $\xi \in \hatx F$ we denote
$\xi \OO  = (\xi \hatOO) \cap F \in \IF$.

For $z \in \C$ we denote $\exp(z) = e^{2\pi iz}$.
For each place $v$ we let $\exp_v:F_v \to \C$,
\begin{equation*}
	\exp_v(x_v) =
	\begin{cases}
		\exp(x_v) & \text{if $v \in \mba$,} \\
		\exp(-\mathop{\mathrm tr}_{F_v/\Q_p}(x_v))
				   & \text{if $v \in \mbf.$}
	\end{cases}
\end{equation*}
Then we define the exponential map on $F_\mbA$ by
$\exp_\mbA(x) = \prod_v \exp_v(x_v)$ and we denote its restriction to 
$\widehat F$ by $\exp_\mbf$.
Finally, denoting by $\HH$ the complex upper half-plane,
we let
\[
	\exp_F : F\times \HH^\mathbf{a} \to \C, \quad (\xi, z) \mapsto
	\prod_{\substack{v \in \mba ,\, \xi_v>0}}
	\exp(\xi_v\, z_v)
	\prod_{\substack{v \in \mba ,\, \xi_v<0}}
	\exp(\xi_v\, \overline{z_v})
	\,.
\]
For $F=\Q$ we have $\exp_\Q(\xi,z)=q^\xi$ when $\xi>0$
and $\exp_\Q(\xi,z)=\overline{q}^\xi$ when $\xi<0$.

Given a character $\chi$ of $C_F$ of conductor $\cond\chi$ and
$\id c\subseteq\OO$ we denote
\[
 \chi_\mathbf{a} = \prod_{v \in \mathbf{a}} \chi_v
 \,,\qquad
 \chi_{\id c} = \prod_{v \mid \id{c}} \chi\p
 \qquad\text{and}\qquad
 \chi_* = \prod_{\substack{v\in\mbf,\,v\nmid\cond\chi}}\chi\p
 \,. 
\]
We also use $\chi_*$ for the character induced on ideals prime to
$\cond\chi$.
Given $\xi \in 
F^\times$ we denote by the $\chic{\xi}$ the character of 
$C_F$ corresponding to the extension $F(\sqrt{\xi})/F$, and we denote 
its conductor by $\cond{\xi}$. 

Given a quadratic extension $K/F$ we let $\OO[K]$ be the maximal order, and
when it is totally imaginary we let $\tK = [\OOx[K]:\OOx]$.
For $x\in K$ we let $\disc(x)=(x-\overline{x})^2$.
If $K = F(\sqrt \xi)$ with $\xi \in \Fnc$ and $\id a \in \IF$ we say that
the pair $(\xi,\id a)$ is a \emph{discriminant} if there exists $\omega \in K$
with $\disc(\omega) = \xi$ such that $\OO \oplus \id a\,\omega$ is an order in $K$.
When this order equals $\OO[K]$
we say that the discriminant $(\xi,\id a)$ is \emph{fundamental}. 
For each $\xi$ there exists a unique $\id a$ such that $(\xi,\id a)$ is
fundamental (see \cite[Prop.~2.11]{waldspu}).
Finally, for each place $v$ of $F$ we let
\[
	\kro\xi v =
	\begin{cases}
		 1 & \text{if $v$ remains inert in $K$,} \\
		-1 & \text{if $v$ splits in $K$}, \\
		 0 & \text{if $v$ ramifies in $K$.}
	\end{cases}
\]
For completeness, for $\xi \in F^\times$ we say that $(\xi^2,\xi^{-1} \OO)$ is a
fundamental discriminant and we let $\kro{\xi^2}v = 1$ for all $v$.

Given a quaternion algebra $B/F$ we denote by $\norm:B^\times\to F^\times$ and
$\trace:B\to F$ the reduced norm and trace maps, and we use $\norm$ and $\trace$
to denote other norms and traces as well. 
In particular $\Delta=\trace^2-4\norm$.


\subsection{Acknowledgments}

We would like to thank the anonymous referees for their valuable suggestions,
which helped us improve the original manuscript.
We also thank Ariel Pacetti and Zhengyu Mao for the many conversations we shared
on this subject along the years.

\section{Half-integral weight Hilbert modular forms}
\label{sect:hilbert_half_integral}

\def\Ga#1{\Gamma_{#1}}
\newcommand{\ah}{\rho}
\def\Manti#1{\mathcal M_{\mathbf{3/2+k}}^\ah(#1,\chi)}
\def\Mholo#1{\mathcal M_{\mathbf{3/2+k}}(#1,\chi)}
\newcommand{\MA}{\widetilde{\SL_2}(\FA)}

We consider \emph{skew-holomorphic} forms,
i.e., half-integral weight Hilbert modular forms
that are holomorphic in
a subset of places of $\mba$ and antiholomorphic in its complement.
The holomorphic case is considered in \cite[Sect.~3]{shim-hh}.
Here we summarize the basic definitions and results in the
skew-holomorphic case,
some of which are implicit in \cite[Sect.~11]{shim-hh}.

\medskip

Denoting by $\HH$ the complex upper half-plane, we say that a function $f$ on
$\HH$ is \emph{antiholomorphic} if $z \mapsto f(-\overline z)$ is holomorphic.

For an integral ideal $\id c$ divisible by $4$, following Shimura, we denote
\begin{equation*}
	\Gamma_{\id c}
	=
	\set{\beta = \pmat{a_\beta & b_\beta \\ c_\beta & d_\beta} \in \SL_2(F)}
	{a_\beta,d_\beta \in \OO, b_\beta \in 2 \id{d}^{-1}, c_\beta \in 2^{-1}
		\id c \id d}.
\end{equation*}
		
Let $\mathbf{k} \in \Z^\mba$ and let $\ah \in \{\pm1\}^\mba$.
We consider the automorphy factor of weight $\mathbf{3/2+k}$ and
signature $\ah$ defined
for $\beta \in \Ga{4\OO}$ and $z \in \HH^\mba$ by
\begin{align*}
	K_\ah(\beta,z)
	=
	h(\beta,z)
	\prod_{\ah_v = 1} j(\beta_v,z_v)^{k_v+1}
	\prod_{\ah_v = -1} j(\beta_v,z_v)^{-1} \abs{j(\beta_v,z_v)}
	j(\beta,\overline{z_v})^{k_v+1}
	\, .
\end{align*}
Here $j$ and $h$ denote respectively the standard weight $1$ and weight
$\mathbf{1/2}$ automorphy factors.

Fix an ideal $\id c$ divisible by $4$ and
let $\chi$ be a character of $C_F$ with conductor dividing $\id c$.
We denote by $\Manti{\id c}$ the space of functions $f : \HH^\mba \to \C$
that are holomorphic in the variables $v$ such that $\ah_v = 1$ and
antiholomorphic in the remaining variables, satisfying
\begin{equation}\label{eqn:modularity}
	f(\beta z)
	=
	\chi_{\id c}(a_\beta) \,
	K_\ah(\beta,z)\,
	f(z)
\end{equation}
for every $\beta \in \Ga{\id c}$.
Furthermore, we require the usual regularity at the cusps if $F = \Q$.

When $\ah_v = 1$ for every $v$ we obtain the usual space of
holomorphic modular forms, which we denote by $\Mholo{\id c}$.
For $F = \Q$ we have that $f(z) \mapsto f(2z)$ gives an isomorphism between
$\Mholo{\id c}$ and the classical space of modular forms of level $c = \idn{c}$
and character $\chi$ (respectively $\chi\,\chi^{-1}$) when $k$ is odd
(respectively even).
The homothety by $2$ is explained by the difference between the standard
$\Gamma_0(c)$ and $\Gamma_{\id c}$ as defined above.
The change in the character appears because in the classical setting the
automorphy factor is given by $h^{3+2k}$ instead of $hj^{k+1}$.

\begin{rmk}\label{rmk:mholozero}
The space $\Manti{\id c}$ is zero unless
\begin{equation*}
	\chi_\mba(-1) \prod_{v\in\mba}\ah_v = (-1)^{\mathbf{k+1}}.
\end{equation*}
Furthermore, this space depends only on the restriction of $\chi\p$
to $\OOx[v]$ for $\vii$,
and not on the other local components of $\chi$.
\end{rmk}

\medskip

We consider the action of $F^\times$ on $\HH^\mba$ given by
$u \cdot z = w$, where $w_v = u_v z_v$ if $u_v > 0$ and 
$w_v = u_v \overline{z_v}$ if $u_v < 0$.
We denote by $(f,u) \mapsto f \cdot u$ the action it induces on functions in
$\HH^\mba$.

\begin{prop}\label{prop:hilbert_holomorphic}

Assume that $u \in \OO$ is such that $\sgn(u_v) = \ah_v$ for every $v \in
\mba$.
Then we have an injection
\begin{equation*}
	\Manti{\id c} \longrightarrow
	\mathcal M_{\mathbf{3/2+k}}\left(u \id c,\chi \chi^{u}\right),
	\qquad
	f \mapsto f \cdot u \,,
\end{equation*}
which is an isomorphism if $u \in \OOx$.

\begin{proof}

The second claim is straightforward.
The first claim is proved by replacing $z$ by $u\cdot z $ in
\eqref{eqn:modularity} and using that
\begin{equation}\label{eqn:shim33}
	K_\ah(\beta,u\cdot z) =
    \chi^u_{\cond u}(a_\beta) \,
	K\left( \beta^u, z\right)
    \qquad\forall\beta\in\Ga{4u\OO}
    \,,
\end{equation}
where $K$ is the automorphy factor for holomorphic forms and
$\beta^u = \smat{u&0\\0&1}\beta\smat{u&0\\0&1}^{\scriptscriptstyle-1}$.

To prove \eqref{eqn:shim33}, we note that since
$j(\beta_v^{u_v},z_v) = j(\beta_v,u_v z_v)$ we have that
\begin{equation*}
	\frac{K_\ah(\beta,u\cdot z)}
	{K\left( \beta^u,z\right)}
	=
	\frac{h(\beta,u\cdot z)}
	{h(\beta^u,z)} \,
	\prod_{u_v < 0}
	\frac{\abs{j_v(\beta_v,u_v \overline{z_v})}}
	{j_v(\beta_v,u_v \overline{z_v})}.
\end{equation*}
Then the result follows from \cite[Prop.~11.3]{shim-hh},
which proves that for $u \in \OO$ and $\beta \in \Ga{4u\OO}$
the right hand side of this equation is equal to
$\chi^u_{\cond u}(a_\beta)$.
\end{proof}

\end{prop}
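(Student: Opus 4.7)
The plan is to show that $f \cdot u$ is holomorphic, satisfies the required modularity transformation, and then to handle injectivity (and invertibility for $u \in \OOx$).

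For holomorphy, at each $v \in \mba$ with $\ah_v = -1$ the substitution $z_v \mapsto u_v \overline{z_v}$ (with $u_v < 0$) converts the antiholomorphic dependence of $f$ in $z_v$ into a holomorphic one, while at places with $\ah_v = +1$ the positive scaling $z_v \mapsto u_v z_v$ preserves holomorphy. Hence $f \cdot u$ is holomorphic on $\HH^\mba$.

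For modularity, I would substitute $z \mapsto u \cdot z$ in \eqref{eqn:modularity} and translate between the skew-holomorphic and holomorphic automorphy factors via the conjugation $\beta \mapsto \beta^u = \smat{u & 0 \\ 0 & 1} \beta \smat{u & 0 \\ 0 & 1}^{-1}$, which one checks by a direct matrix computation sends the relevant level-$u\id c$ subgroups into level-$\id c$ subgroups. The central ingredient is an identity of the form
\begin{equation*}
    K_\ah(\beta, u \cdot z) = \chi^u_{\cond u}(a_\beta) \, K(\beta^u, z),
\end{equation*}
whose factors decompose place by place: the integer-weight pieces recombine via $j(\beta^u_v, z_v) = j(\beta_v, u_v z_v)$, with a unimodular correction $\lvert j \rvert / j$ at each antiholomorphic place, while the half-integer factor $h$ produces the quadratic twist by $\chi^u$ seen in the target character $\chi\chi^u$.

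Injectivity is immediate, since $z \mapsto u \cdot z$ is a bijection of $\HH^\mba$, so $f \cdot u = 0$ forces $f = 0$. When $u \in \OOx$ we have $u\id c = \id c$ and $\cond u = \OO$, so $g \mapsto g \cdot u^{-1}$ provides a two-sided inverse by the same computation, yielding the isomorphism. The main obstacle will be the half-integer automorphy factor identity: extracting the precise character $\chi^u_{\cond u}(a_\beta)$ from $h(\beta, u\cdot z)/h(\beta^u, z)$ (combined with the unimodular corrections at antiholomorphic places) is a delicate quadratic-symbol computation in the Hilbert setting, and I would invoke \cite[Prop.~11.3]{shim-hh} rather than rederive it from scratch.
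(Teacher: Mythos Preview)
Your proposal is correct and follows essentially the same route as the paper: substitute $z \mapsto u\cdot z$ in the modularity relation, reduce via $j(\beta_v^{u_v},z_v)=j(\beta_v,u_v z_v)$ to a ratio involving $h$ and the unimodular corrections $\lvert j\rvert/j$ at antiholomorphic places, and invoke \cite[Prop.~11.3]{shim-hh} to identify that ratio with $\chi^u_{\cond u}(a_\beta)$. One small inaccuracy: your claim that $\cond u=\OO$ when $u\in\OOx$ is not generally true (e.g.\ $u=-1$ over $\Q$), but this is harmless since $4\mid\id c$ already absorbs the conductor, and the inverse $g\mapsto g\cdot u^{-1}$ works regardless.
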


\begin{rmk}\label{rmk:uholo}

In particular, when there exist units of arbitrary signature,
i.e., when $h_F = h^+_F$,
we have that every space $\Manti{\id c}$ is isomorphic to a
space of holomorphic forms with the same level.

\end{rmk}

\medskip

Let $f \in \Manti{\id{c}}$.
The automorphic form induced by $f$ is the function $f_\mbA$ on $\MA$ given by
\begin{equation*}
	f_\mbA(\beta\alpha) =
	\chi_{\id c}(a_\beta)^{-1}
	K_\ah(\alpha,\mathbf i)^{-1}
	f(\alpha \, \mathbf i),
\end{equation*}
where $\beta \in \SL_2(F)$ and $\alpha \in \MA$ are such that
$\alpha_\mbf$ belongs to the adelized $\Ga{\id c}$.

Let $\id a \in \IF$, and let $t \in \widehat F^\times$ be such that
$t \OO = \id a$.
Then the Fourier coefficients $\lambda(\xi, \id a;f)$ for $\xi \in F$ by
are defined in terms of $f_\mbA$ by the equality
\begin{equation}\label{eqn:fourier}
	\chi_\mbf(t) \,
	\idn a^{-1/2}
	f_\mbA\Bigl(\smat{t & s \\ 0 & t^{-1}}\Bigr)
	=
	\sum_{\xi \in F}
	\lambda(\xi, \id a;f) \,
	\exp_F(\xi,(s+ \mathbf i) /2)
\end{equation}
for all $s\in F_\mba$,
where $\lambda(\xi, \id a;f)=0$ unless $\xi_v\ah_v \geq 0$ for
every $v \in \mba$.
In particular, for $u$ as in Proposition \ref{prop:hilbert_holomorphic}
they satisfy that
\begin{equation}\label{eqn:fu}
	\lambda\left(u \xi, \id a; f\cdot u\right)
	=
	\lambda(\xi, \id a; f)
	\qquad
	\forall\,\xi \in F,\,
	\id a \in \IF.
\end{equation}
Moreover, we have that
\begin{equation*}
	f(z)
	=
	\sum_{\xi \in F}
	\lambda(\xi,\OO;f) \,
	\exp_F(\xi,z /2) \,.
\end{equation*}


\section{Quaternionic modular forms}\label{sect:quat_forms}

In this section we recall 
the basic definitions and facts regarding
quaternionic modular forms.
We follow \cite[Sect.~1]{waldspu} and consider only the case of
totally definite quaternion algebras.
In this case quaternionic modular forms can be defined as functions
on the \emph{finite} ideles.

\medskip

Let $B$ be a totally definite quaternion algebra over $F$.
We let $W = B/F$, in which we consider the totally negative definite
ternary quadratic form $\Delta(x)=(x-\overline x)^2$.
We consider the space $V_\mathbf{k}$ of homogeneous polynomials in $W$ of
degree $\mathbf{k}$, harmonic with respect to $\Delta$.
We denote by $(P,\gamma)\mapsto P\cdot \gamma$ the action of 
$B^\times /F^\times$ on $V_\mathbf{k}$ by conjugation.

Let $R$ be an order
in $B$. A \emph{quaternionic modular form} of weight 
${\mathbf{k}}$ and level $R$ is a function $\varphi: \hatx B\to
V_{\mathbf{k}}$ such that for every 
$x\in \hatx B$ the following transformation formula is satisfied:
\[
 \varphi(u x \gamma) = \varphi(x) \cdot \gamma \qquad
 \forall\,u\in\hatx R,\,\gamma\in B^\times\,.
\]
The space of all such functions is denoted by $\mathcal{M}_{\mathbf{k}}(R)$. We let 
$\mathcal{E}_{\mathbf{k}}(R)$ be the subspace of functions that factor through the map 
$\norm:\hatx B \to \hatx F$. These spaces come equipped with the action 
of Hecke operators $\Tm$, indexed by integral ideals $\id m \subseteq \OO$,
and given by
\begin{equation}\label{eqn:quat_hecke}
\Tm\varphi(x) = \sum_{h \in \hatx R \backslash H_\id{m}} \varphi(h x) 
\,,
\end{equation}
where $H_\id{m} = \left\{h \in \widehat R \, : \, \norm(h) \, \OO = \id m\right\}$.

Given $x \in \hatx B$ we let
\[
 \widehat R_x = x ^{-1} \widehat R \, x \,,
 \qquad R_x = B\cap \widehat R_x \,, 
 \qquad \Gamma_{\!x} = R_x^\times / \OOx,
 \qquad t_x = \num{ \Gamma_{\!x} }\,.
\]
The sets $\Gamma_{\!x}$ are finite since $B$ is totally definite. Let $\cl(R) = 
\hatx R \backslash \hatx B / B^\times$. We define an inner product on 
$\mathcal{M}_{\mathbf{k}}(R)$, called the \emph{height pairing}, by
\[
 \ll{\varphi,\psi} = \sum_{x\in \cl(R)} \tfrac{1}{t_x}
 \ll{\varphi(x),\psi(x)}\,. 
\]
The space of \emph{cuspidal forms} $\mathcal{S}_{\mathbf{k}}(R)$ is defined as the 
orthogonal complement of $\mathcal{E}_{\mathbf{k}}(R)$ with respect to
the height pairing.

Denote $\mathcal{M}_{\mathbf{k}}(R,\bbu)$ and 
$\mathcal{S}_{\mathbf{k}}(R,\bbu)$ the subspaces
of quaternionic modular forms invariant by $\hatx F$.
Let $N(\widehat R) = \{z \in \hatx B\,:\,\widehat R_z = \widehat R\}$ be the 
normalizer of $\widehat R$ in $\hatx B$
and let $\Bil(R) = \hatx R \backslash N(\widehat R) / 
\hatx F$.
The group $\Bil(R)$ acts on $\mathcal{M}_{\mathbf{k}}(R,\bbu)$ and 
$\mathcal{S}_{\mathbf{k}}(R,\bbu)$
by $(\varphi\cdot z)(x)=\varphi (zx)$,
and this action is
related to the height pairing by the equality 
\begin{equation}\notag
\ll{\varphi \cdot z,\psi \cdot z} = \ll{\varphi, \psi}\,.
\end{equation}
Given a character $\delta$ of $\Bil(R)$ we denote
\[
  \mathcal{M}_{\mathbf{k}}(R,\bbu)^\delta =
  \left\{ \varphi \in \mathcal{M}_{\mathbf{k}}(R,\bbu) \; : \;
  \varphi \cdot z = \delta(z) \, \varphi \quad \forall z \in \Bil(R)\right\}.
\]

Given $x\in \hatx B$ and $P\in V_{\mathbf{k}}$, let $\varphi_{x,P} \in\mathcal{M} 
_{\mathbf{k}}(R)$ be the quaternionic modular form given by
\begin{equation}\label{eqn:qmf}
 \varphi_{x,P}(y) = \sum_{\gamma\in \Gamma_{\!x,y}} P \cdot \gamma\,,
\end{equation}
where $\Gamma_{\!x, y} = (B^\times \cap x ^{-1} \hatx R y) / \OOx$. 
Note that $\varphi_{x,P}$ is supported in $\hatx R x B^\times$.

Given $\varphi \in \mathcal{M}_{\mathbf{k}}(R)$, using that $\varphi(x) \in 
V^{\Gamma_{\!x}}$ for every $x \in \hatx B$ we get that
\begin{equation}\label{eqn:phi=sum_phix}
 \varphi = \sum_{x\in \cl(R)} \tfrac{1}{t_x} \varphi_{x,\varphi(x)}\,.
\end{equation}

\subsection{Locally residually unramified orders}

Denote by $\id N$ the discriminant of $R$.
We say that $R$ is \emph{locally residually unramified} 
if the Eichler invariant $e(R_v)$ is nonzero for every $v \mid \id N$.

The local classification of such orders is well known.
If $v\nmid\id N$ or if $e(R_v) = 1$ then
\begin{equation}\label{eqn:e=1}
 R\p \simeq \left\{\left(\begin{smallmatrix} a &  b \\ \pip^r c & d
\end{smallmatrix}\right) \,:\, a,b,c,d \in \OO[\pv] \right\},
\end{equation}
where $r = \val\p(\id N)$. In this case $B_v$ is always split
and the discriminant form is $\Delta=(a-d)^2 + 4\pip^r bc$.

If $e(R\p) = -1$ let $E\p$ be the unique
unramified quadratic extension of $F\p$. Then
\begin{equation}\label{eqn:e=-1}
 R\p \simeq \left\{\left(\begin{smallmatrix} \alpha  & \pip^ r \beta \\ \pip^{r+t}
\overline{\beta} & \overline{\alpha} \end{smallmatrix}\right) \,:\, \alpha,\beta \in \OO[E\p]
\right\},
\end{equation}
where $t\in \{0,1\}$ and $2r+t = \val\p(\id N)$. In this case $B_v$ is
split when $t=0$ and ramified when $t=1$.
The discriminant form is
$\Delta=\Delta(\alpha)+4\pip^{2r+t}\norm{\beta}$.
\begin{prop}
    \label{prop:disc_eichler}
    Let $v\mid\id N$ such that $e(R_v)\neq0$,
    and let $x\in R_v$.
    \begin{enumerate}
        \item
            If $\Delta(x)\in\OOx[v]$, then $\kro{\Delta(x)}{v}=e(R_v)$.
        \item
            If $\val_v(\id N) > 1$ and
            $\OO[v][x]$ is a maximal order
            then $\Delta(x)\in\OOx[v]$.
    \end{enumerate}
\end{prop}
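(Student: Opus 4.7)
The plan is to prove both parts by direct case analysis based on the local normal forms \eqref{eqn:e=1} and \eqref{eqn:e=-1} of $R_v$, computing $\Delta(x)$ explicitly in each case. Let $e = \val_v(2)$; then $v(4\pip^s) = s + 2e$ for any $s$.

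For Part~(a) with $e(R_v) = 1$, the normal form gives $\Delta(x) = (a-d)^2 + 4\pip^r bc$ with $r = \val_v(\id N) \ge 1$. The second summand has $v$-valuation at least $r + 2e > 2e$, while $(a-d)^2$ has even valuation, so $\Delta(x) \in \OOx[v]$ forces $a-d \in \OOx[v]$. Factoring $\Delta(x) = (a-d)^2(1+w)$ with $v(w) > 2e$, the version of Hensel's lemma for squares that is valid at dyadic places yields $1+w \in (F_v^\times)^2$, hence $\Delta(x)$ is a square. Thus $F_v(\sqrt{\Delta(x)}) = F_v$ is split, giving $\kro{\Delta(x)}{v} = e(R_v)$. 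The case $e(R_v) = -1$ is analogous: from $\Delta(x) = \Delta(\alpha) + 4\pip^{2r+t}\norm(\beta)$ the same argument gives $\Delta(\alpha) \in \OOx[v]$ and $\Delta(x)/\Delta(\alpha) \in (F_v^\times)^2$. Since $\Delta(\alpha)$ a unit forces $\alpha \notin F_v$, we obtain $F_v(\sqrt{\Delta(x)}) = F_v(\sqrt{\Delta(\alpha)}) = F_v(\alpha - \overline\alpha) = E_v$, the unramified quadratic extension, matching $e(R_v) = -1$.

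For Part~(b), the discriminant ideal of the $\OO[v]$-algebra $\OO[v][x]$ is generated by $\Delta(x)$; if $\OO[v][x]$ is maximal in the \'etale $F_v$-algebra $F_v[x]$ it coincides with the discriminant of $F_v[x]/F_v$, which is a unit ideal unless $F_v[x]$ is a ramified quadratic extension of $F_v$. It therefore suffices to rule out the latter possibility when $\val_v(\id N) \ge 2$. Extending the valuation analysis of Part~(a) to the $e(R_v) = 1$ case with $r \ge 2$: either $v((a-d)^2) < v(4\pip^r bc)$, in which case $\Delta(x) = (a-d)^2(1+w)$ has square class in $F_v^\times/(F_v^\times)^2$ equal to that of the unit $1+w$, giving a split or unramified quadratic $F_v[x]$; or $v(\Delta(x)) \ge r + 2e \ge 2 + 2e$, which exceeds the maximum valuation $2e+1$ attainable by the discriminant of a maximal order in a ramified quadratic extension of $F_v$, so $\OO[v][x]$ cannot be maximal there. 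Either way, $F_v[x]$ is not ramified quadratic, so $\Delta(x) \in \OOx[v]$. The $e(R_v) = -1$ case is entirely analogous, using $\Delta(\alpha) + 4\pip^{2r+t}\norm(\beta)$ with $2r+t \ge 2$.

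The main obstacle will be the dyadic case $v \mid 2$: Hensel's lemma for squares requires the strict bound $v(w) > 2e$, which is supplied exactly by the uniform inequality $r \ge 1$ (resp.\ $2r+t \ge 1$) coming from $v \mid \id N$; and discriminants of ramified quadratic extensions of $F_v$ can have valuation as large as $2e+1$, so the valuation comparison in Part~(b) must exploit the stronger hypothesis $\val_v(\id N) \ge 2$ to push $v(\Delta(x))$ above this threshold in every configuration where $F_v[x]$ could a priori be ramified.
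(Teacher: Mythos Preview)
Your Part~(a) is correct and parallels the paper's argument, just packaged differently: the paper unifies both Eichler-invariant cases by fixing $\delta\equiv 1\pmod{4\OO[v]}$ with $\kro{\delta}{v}=e(R_v)$ and observing $\Delta(x)\equiv\delta t^2\pmod{4\id N}$, whereas you treat the two cases separately via the explicit normal forms and Hensel. Both work.

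Your Part~(b) has a genuine gap in the dyadic case. In your subcase~1 (where $v((a-d)^2)<v(4\pip^r bc)$) you assert that because $\Delta(x)$ lies in the square class of the \emph{unit} $1+w$, the algebra $F_v[x]$ is split or unramified. This inference is false for $v\mid 2$: a unit can generate a ramified quadratic extension of a dyadic local field. Concretely, take $F_v=\Q_2$, $e(R_v)=1$, $r=\val_v(\id N)=3$, $a-d=4$, $bc=1$. Then $\Delta(x)=16+32=48$, so $v(\Delta(x))=4<5=2e+r$ and you are strictly in subcase~1 and \emph{not} in subcase~2; yet $1+w=3$ and $F_v[x]=\Q_2(\sqrt3)$ is ramified. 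The proposition itself is not violated here---$\OO[v][x]$ has discriminant of valuation~$4$ while the maximal order in $\Q_2(\sqrt3)$ has discriminant of valuation~$2$, so $\OO[v][x]$ is indeed non-maximal---but your argument does not establish this, and your closing paragraph's promise that $r\ge 2$ will ``push $v(\Delta(x))$ above the threshold in every configuration where $F_v[x]$ could a priori be ramified'' is not fulfilled.

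The paper avoids the difficulty by arguing the contrapositive directly, without classifying $F_v[x]$: from $\Delta(x)\equiv t^2\pmod{4\pip^2}$ (with $t\equiv\trace(x)\pmod 2$) and $\pip\mid\Delta(x)$ one gets $\pip\mid t$; setting $y=x+\tfrac{t-\trace(x)}{2}\in R_v$ one has $\trace(y)=t$ and $4\norm(y)=t^2-\Delta(x)\in 4\pip^2\OO[v]$, so $y/\pip$ is integral over $\OO[v]$ and $\OO[v][x]=\OO[v][y]$ is not maximal. Your approach can be repaired along the same lines: once $\pip\mid t$, note that $\Delta(x)/\pip^2\equiv(t/\pip)^2\pmod{4}$ is again congruent to a square modulo~$4$, hence is the discriminant of a strictly larger order, so $\OO[v][x]$ cannot be maximal---but this is essentially the paper's argument and no longer goes through the square class of $\Delta(x)$.
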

\begin{proof}
    Let $\delta\equiv1\pmod{4\OO[v]}$ such that $\kro{\delta}{v}=e(R_v)$.
    From \eqref{eqn:e=1} and \eqref{eqn:e=-1} it follows that
    $\Delta(x)\equiv \delta t^2\pmod{4\id N}$
    with $t\in\OO[v]$. In particular
    $t\equiv\trace(x)\pmod{2\OO[v]}$.
    If $\Delta(x)\in\OOx[v]$ then
    $\kro{\Delta(x)}{v}=\kro{\delta}{v}=e(R_v)$,
    proving (a).
    \par
    Now suppose $\pip^2\mid\id N$
    and $\pip\mid\Delta(x)$.
    Then
    $\pip\mid t$ and
    $\Delta(x)\equiv \delta t^2\equiv t^2\pmod{4\pip^2}$.
    Denoting $y=x+\frac{t-\trace(x)}2$
    it follows that $\trace(y)=t$ and
	$\norm(y)=\frac{t^2-\Delta(x)}4$,
    hence $\pip\mid\trace(y)$ and
	$\pip^2\mid\norm(y)$
    so that $y/\pip$ is integral
	and $\OO[v][x]=\OO[v][y]$ is not maximal,
    proving (b).
\end{proof}

We recall
that for these orders
$\Bil(R)=\prod_{v\mid\id N} \Bil(R\p)$, where
$\Bil(R\p)$ is cyclic of order $2$ generated by the matrix $z_v$
given, in terms of the above description, by
\begin{equation}\label{eqn:bilgen}
	z_v =
	\begin{cases}
		\smat{0&1\\\pip^r&0}
			   & \text{if $e(R\p)=1$,}
			   \\[3pt] 
		\smat{0&\pip^r\\\pip^{r+t}&0}
			   & \text{if $e(R\p)=-1$.}
	\end{cases}
\end{equation}
We denote by $\iota\p$ the unique isomorphism
$\iota\p : \Bil(R\p) \to \{\pm 1\}$.


\section{Weight functions and theta series}\label{sect:construction}

Given an ideal $\id N$, from now on we let $R$ be a fixed locally residually
unramified order with discriminant $\id N$ in a totally definite quaternion
algebra $B$ over $F$.
There is at least one choice for $R$ except when
$[F:\Q]$ is odd and $\id N$ is a square.
For instance, when $[F:\Q]$ is even, one choice is to take $B$ to be the
quaternion algebra ramified at the infinite places and $R$ an Eichler
order, i.e.,  with $e(R_v)=1$ for all $v \mid\id N$.
On the other hand, when $[F:\Q]$ is odd, the quaternion algebra must
ramify in at least one finite prime: this is
possible precisely when $\val_v(\id N)$ is odd for some $v\mid\id N$,
i.e., when $\id N$ is not a square.

\medskip

We fix $l \in F^\times$ such that
$\cond l$ is prime to $2 \id N$.
In particular, $\cond l$ is square-free.
We also fix a Hecke character $\chi$ unramified outside $\SN$ such that
for all $v \mid \cond\chi$ we have that
$\chi_v$ is an odd character.
In practice we can take $\chi_v$ of the smallest possible conductor, namely
$2\pip \OO[v]$.

We denote by $\delta\ssl$ be the character of $\Bil(R)$ given by
\begin{equation}\label{eqn:deltal}
	\delta\ssl(z) = \chic[*]{l}(\norm z) \, 
	\prod_{\vii} \iota\p(z\p)
	\,.
\end{equation}

\subsection{Local weight functions}

We consider the lattice $L\p = R\p / \OO[v] \subseteq W\p$
with the quadratic form $\Delta$. The associated bilinear form is
given by $\ll{x,y} = \Delta(x+y) - \Delta(x) - \Delta(y)$, so that
$\Delta(x)=\ll{x,x}/2$.
When using the identifications from $\eqref{eqn:e=1}$ and $\eqref{eqn:e=-1}$ we
write elements of $L\p$ as $2\times2$
matrices, which should be understood modulo $\OO[v]$.

\subsubsection*{Type I}

For each prime $v \mid \cond l$ we fix $z \in L\q \setminus \piq L\q$ such
that $\pip\mid\Delta(z)$.
Such $z$ exists by \eqref{eqn:e=1}.
Since $v\nmid 2\id N$ the lattice $L_v$ is unimodular.
It follows that if $x \in L\q \setminus \piq L\q$ is such
that $\pip\mid\Delta(x)$ and $\pip\mid\ll{x,z}$, then
there exists $\xi \in \OOx[v]$ such that $x\equiv\xi z\pmod{\piq L\q}$.
We let $w\q$ be the (local) \emph{weight function} on $L\q$ given by
\begin{equation*}
	w\q(x) =
	\begin{cases}
		0 & \text{if $\pip\nmid\Delta(x)$ or $x\equiv0\tpmod{\piq L\q}$,} \\
		\chic[v]{l}(-\ll{x,z})
		   & \text{if $\pip\mid\Delta(x)$ and $\pip\nmid\ll{x,z}$,} \\
		\chic[v]{l}(\xi)
		   & \text{if $\pip\mid\Delta(x)$ and $x\equiv\xi z\tpmod{\piq L\q}$.}
	\end{cases}
\end{equation*}

Weight functions of type I are already considered in \cite{w2},
where the following properties are stated without proof.

\begin{prop}\label{prop:wq}
	The function $w\q$ is $\piq L\q$-periodic and satisfies
\begin{alignat}{2}
    w\q (\xi x) & = \chic[v]{l}(\xi) \, w\p(x) &&
	\forall\,\xi \in \OO[v]^\times,
	\label{eqn:wqhomog}\\
    w\q(y x y^{-1}) & = \chic[v]{l}(\norm y) \, w\q(x) \qquad &&
	\forall\,y \in R\q^\times.
	\label{eqn:wqtransp}
\end{alignat}
\end{prop}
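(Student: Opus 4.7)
I will prove the three claims—$\piq L\q$-periodicity, the homogeneity \eqref{eqn:wqhomog}, and the conjugation identity \eqref{eqn:wqtransp}—separately, using the explicit definition of $w\q$ together with the hypotheses that $v$ is odd and $v \mid \cond l$ with $\cond l$ squarefree.

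Periodicity and \eqref{eqn:wqhomog} should be routine case checks. Each defining clause of $w\q(x)$ depends on $x$ only through invariants defined on $L\q/\piq L\q$: the residue of $\Delta(x)$ modulo $\pip$, the vanishing of the image of $x$, and either the residue of $\ll{x,z}$ modulo $\pip$ or the scalar $\xi\in\OOx[v]/(1+\pip\OO[v])$ with $x\equiv\xi z$. Since $v$ is odd and $\cond l$ is squarefree, the conductor of $\chic[v]{l}$ equals $\pip\OO[v]$, so $\chic[v]{l}$ factors through the relevant residue group, giving periodicity. For \eqref{eqn:wqhomog}, multiplication by $\xi\in\OOx[v]$ commutes with reduction modulo $\piq L\q$, scales $\Delta$ by $\xi^2$, scales $\ll{\cdot,z}$ by $\xi$, and replaces the ``scalar clause'' $x\equiv\xi_0 z$ by $\xi x\equiv\xi\xi_0 z$; the conclusion follows in each of the four cases from the multiplicativity of $\chic[v]{l}$.

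The bulk of the work is \eqref{eqn:wqtransp}. Conjugation by $y\in R\q^\times$ preserves $\Delta$ and the lattice $L\q$, so it permutes the support of $w\q$, and it suffices to treat $x$ with $w\q(x)\neq 0$. Since $v\nmid\id N$, I can identify $R\q$ with $M_2(\OO[v])$ via \eqref{eqn:e=1} (with $r=0$), take $z=\smat{0&1\\0&0}$, and note that $\ll{x,z}=4c$ where $c$ is the $(2,1)$-entry of $x$. Writing $y=\smat{\alpha&\beta\\\gamma&\delta}$ and $x=\smat{a&b\\c&-a}$ with the isotropy relation $bc\equiv-a^2\pmod{\pip}$, a direct matrix calculation expresses the $(1,2)$- and $(2,1)$-entries of $yxy^{-1}$ modulo $\pip$ as $-(\alpha a+\beta c)^2/(c\det y)$ and $(c\delta+a\gamma)^2/(c\det y)$ (with symmetric identities when $c\equiv 0$ so that $x\equiv b z$). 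Since $\chic[v]{l}$ is quadratic the square factors evaporate, leaving $\chic[v]{l}(\det y)$ times the original value $\chic[v]{l}(-c)$ or $\chic[v]{l}(b)$; together with $\det y=\norm y$ for $y\in\GL_2(\OO[v])$ this gives the claim.

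The main obstacle is that conjugation by $y$ can switch between the two defining regimes of $w\q$: the clause ``$x\equiv\xi z$'' is preserved by $y$ precisely when $\gamma\equiv0\pmod{\pip}$, and symmetrically a vector with $\pip\nmid\ll{x,z}$ may be sent into the orthogonal of $z$ exactly when $c\delta+a\gamma\equiv0\pmod{\pip}$. In these crossover subcases, where $w\q(x)$ and $w\q(yxy^{-1})$ are computed from different clauses, the identity \eqref{eqn:wqtransp} must still hold; the computation above makes this work because the discrepancy between the two clauses is always a perfect square in $\OOx[v]$ divided by a common factor, and hence invisible to $\chic[v]{l}$.
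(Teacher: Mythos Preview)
Your plan is correct and follows essentially the same approach as the paper: reduce to a matrix computation in $M_2(\OO[v])$, exploit that $\chi^l_v$ is quadratic so the square factors arising in the conjugated entries disappear, and check the crossover between the two nonzero clauses. Two small remarks. First, the paper streamlines the computation by working with $yx\overline y$ instead of $yxy^{-1}$ (using $\overline y=\norm(y)\,y^{-1}$); this keeps all entries integral and postpones the $\chi^l_v(\norm y)$ factor to a single application of \eqref{eqn:wqhomog} at the end, which makes the case analysis a bit cleaner than carrying the $1/(c\det y)$ denominators through. Second, you fix $z=\smat{0&1\\0&0}$ without justifying why this suffices, but the proposition is stated for the weight function built from an \emph{arbitrary} fixed $z$ with $\pip\mid\Delta(z)$; the paper closes this gap at the end by observing that any other admissible $z'$ satisfies $z'\equiv\xi(y^{-1}z_0 y)\pmod{\pip L_v}$, so the associated weight function differs from the one you computed by a nonzero constant, and hence inherits \eqref{eqn:wqtransp}. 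You should add that reduction.
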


\begin{proof}
	Let $x,x'\in L\p$ such that $x\equiv x'\pmod{\pip L\p}$.
	Then $\Delta(x)\equiv\Delta(x')\pmod\pip$
	and $\ll{x,z}\equiv\ll{x',z}\pmod\pip$,
	proving the first claim.
	\par
	The equality~\eqref{eqn:wqhomog} is clear.
	To prove~\eqref{eqn:wqtransp} we use the identification
	$R\p=M_2(\OO[v])$.
	Let $x=\smat{a&b\\c&d}$ and assume
	$\pip\mid\Delta(x)$,
	since $w\p(x)=w\p(yxy^{-1})=0$ otherwise.
	\par
	Consider first $z=z_0=\smat{0&0\\1&0}$.
	A simple calculation shows that in this case $w\p(x)=\chic[v]{l}(c)$ when
	$c\not\equiv0\pmod\pip$ and $w\p(x)=\chic[v]{l}(-b)$ when
	$b\not\equiv0\pmod\pip$.
	Now let $y=\smat{r&s\\t&u}\in R\p^\times$ so that
	$\overline{y}=\smat{u&-s\\-t&r}$.
	If $c\equiv0\pmod\pip$ then $a-d\equiv0\pmod\pip$ and
	\[
		yx\overline{y}\equiv\smat{\ast&br^2\\-bt^2&\ast}\pmod{\pip L\p},
	\]
	hence $w\p(yx\overline{y})=\chic[v]{l}(-b)=w(x)$.
	If $c\not\equiv0\pmod\pip$ then
	\[
		yx\overline{y}\equiv\smat{\ast&-c\left(s+\frac{(a-d)r}{2c}\right)^2\\
	c\left(u+\frac{(a-d)t}{2c}\right)^2&\ast}\pmod{\pip L\p},
	\]
	hence $w\p(yx\overline{y})=\chic[v]{l}(c)=w(x)$.
	Since $\overline{y}=\norm(y) y^{-1}$, we
	obtain \eqref{eqn:wqtransp} from \eqref{eqn:wqhomog}.
	\par
	It remains to prove~\eqref{eqn:wqtransp} for arbitrary $z$.
	Say $z'\equiv\xi(y^{-1}z_0y)$ with $\xi\in\OOx[v]$, $y\in
	R\p^\times$,
	and let $w'_v$ be the corresponding weight function.
	It is easy to see that $w'_v(x)=w_v(\xi(yxy^{-1}))$.
	Using \eqref{eqn:wphomog} and~\eqref{eqn:wptransp} for $w_v$
	we conclude that $w'_v$ is a constant multiple of $w_v$;
	therefore it also satisfies \eqref{eqn:wptransp}.
\end{proof}

\subsubsection*{Type II}
For each prime $\vii$
we fix $z \in L\p$ such that
$\Delta(z) \in  \OOx[v]$.
We let $w\p$ be the (local) \emph{weight function} on $L\p$ given by
\begin{equation}\label{eqn:typeII}
	w\p(x) =
	\begin{cases}
		\overline{\chi\p}\bigl(\ll{x,z}/\ll{z,z}\bigr)
		   & \text{if $\ll{x,z}/\ll{z,z} \in\OOx[v]$,} \\
		0  & \text{otherwise.}
	\end{cases}
\end{equation}
Using \eqref{eqn:e=1} and \eqref{eqn:e=-1} with $v\mid\id N$
it is easy to see that
\begin{equation}
	\label{eqn:sop_wii}
	w\p(x) \neq 0 
	\quad
	\Longleftrightarrow
	\quad
	\Delta(x)\in\OOx[v].
\end{equation}

We consider the lattice
\[
	L\p^0
	= \set{x \in L\p}{\ll{x,L\p} \subseteq 4 \pip \OO[v]}.
\]

\begin{prop}\label{prop:wp}
The function $w\p$ is $L\p^0$-periodic and satisfies
\begin{alignat}{2}
    w\p (\xi x) & = \overline{\chi\p}(\xi) \, w\p(x) &&
    \forall\,\xi \in \OO[\pv]^\times,
	\label{eqn:wphomog}\\
    w\p(y x y^{-1}) & = \,\iota\p(y) \, w\p(x) \qquad &&
	\forall\,y \in \Bil(R\p).
	\label{eqn:wptransp}
\end{alignat}
\end{prop}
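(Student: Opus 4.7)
The plan is to verify the three assertions directly from the definition of $w\p$, using the bilinearity and inner-automorphism invariance of $\ll{\cdot,\cdot}$ together with the fact that $\chi\p$ is an odd character whose conductor divides $4\pip\OO[v]$.

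For homogeneity~\eqref{eqn:wphomog}, bilinearity gives $\ll{\xi x,z}/\ll{z,z} = \xi\cdot\ll{x,z}/\ll{z,z}$ for $\xi\in\OOx[v]$, so the claim follows from multiplicativity of $\overline{\chi\p}$. For $L\p^0$-periodicity, fix $x'\in L\p^0$; the defining condition $\ll{x',L\p}\subseteq 4\pip\OO[v]$ yields $\ll{x,x'}\in 4\pip\OO[v]$ and $\Delta(x')=\ll{x',x'}/2\in 2\pip\OO[v]$, so $\Delta(x+x')\equiv\Delta(x)\pmod{\pip\OO[v]}$ and by \eqref{eqn:sop_wii} the values $w\p(x)$ and $w\p(x+x')$ vanish simultaneously. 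When both are nonzero, $\ll{x+x',z}/\ll{x,z} = 1+\ll{x',z}/\ll{x,z}\in 1+4\pip\OO[v]$ lies in the kernel of $\overline{\chi\p}$.

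For the conjugation formula~\eqref{eqn:wptransp}, the invariance of $\trace$ and $\norm$ (and hence of $\Delta$ and the associated bilinear form) under inner automorphisms of $B\p$ gives $\ll{yxy^{-1},z}=\ll{x,y^{-1}zy}$. Since $\Bil(R\p)$ has order two, it will suffice to check the identity on the nontrivial generator $y=z\p$ from~\eqref{eqn:bilgen}, together with the invariance of $w\p$ under conjugation by $R\p^\times$ (needed for the left-hand side to descend to $\Bil(R\p)$). Both reduce to direct computations in the parameterizations~\eqref{eqn:e=1} and~\eqref{eqn:e=-1}: for the generator one verifies
\[
	\ll{x, z\p^{-1} z\, z\p}\equiv -\ll{x,z}\pmod{4\pip\OO[v]},
\]
the error term carrying an extra factor of $\pip$ from the off-diagonal structure of $z\p$; for $u\in R\p^\times$ one verifies $u^{-1}zu-z\in L\p^0$, whence $w\p(uxu^{-1})=w\p(x)$ by the periodicity already proved. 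Combined with $\chi\p(-1)=-1$ and $\iota\p(z\p)=-1$, these give $w\p(z\p xz\p^{-1})=\overline{\chi\p}(-1)w\p(x)=\iota\p(z\p)\,w\p(x)$.

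The principal obstacle is the explicit case analysis behind the two congruences above, particularly in the $e(R\p)=-1$ case, where one must work in the parameterization~\eqref{eqn:e=-1} involving elements of the unramified quadratic extension of $F\p$ and keep careful track of the powers of $\pip$ appearing from both the off-diagonal factor $\pip^{r+t}$ and the diagonal entries.
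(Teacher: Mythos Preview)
Your proof is correct and follows essentially the paper's route: both reduce \eqref{eqn:wptransp} to showing that conjugation by $N(R_v)$ acts as $\pm1$ on $L_v/L_v^0$, checked by explicit computation in the models \eqref{eqn:e=1} and \eqref{eqn:e=-1}. The only organizational difference is that the paper verifies $yxy^{-1}\equiv\pm x\pmod{L_v^0}$ directly for all $x$ (after noting that $L_v/L_v^0$ is generated by $\smat{1&0\\0&0}$), whereas you work dually, checking $y^{-1}zy\equiv\pm z\pmod{L_v^0}$ for the fixed $z$ via the adjunction $\ll{yxy^{-1},z}=\ll{x,y^{-1}zy}$; these are the same computation. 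One small correction: since $\ll{z,z}=2\Delta(z)\in 2\OOx[v]$ while $\ll{x',z}\in 4\pip\OO[v]$, the ratio $\ll{x+x',z}/\ll{x,z}$ lies in $1+2\pip\OO[v]$ rather than $1+4\pip\OO[v]$, matching the paper's passage from congruence $\pmod{4\pip}$ to $\pmod{2\pip}$ and requiring that the conductor of $\chi_v$ divide $2\pip\OO[v]$.
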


\begin{proof}

	Let $x,x'\in L\p$ such that $x\equiv x'\pmod{L\p^0}$.
	Then $\ll{x,z} \equiv \ll{x',z} \pmod{4\pip}$
	implies
	$\ll{x,z}/\ll{z,z} \equiv \ll{x',z}/\ll{z,z} \pmod{2\pip}$,
	which proves the first claim.

	The equality \eqref{eqn:wphomog} is clear.
    Since $\chi\p$ is odd, to prove \eqref{eqn:wptransp}
	it suffices to show that (modulo $L\p^0$) we have
	\[
		y x y^{-1} \equiv
		\begin{cases}
			 x & \text{if $y \in R\p^\times$,} \\
			-x & \text{if $y \notin R\p^\times$.}
		\end{cases}
	\]

	We consider the case $e(R\p) = 1$ with the identification for
	$R\p$ given by
	\eqref{eqn:e=1}.
	Then
	\[
		L\p^0 =
		\set{\smat{\pip a & b \\ \pip^r c & -\pip a}}{a,b,c \in \OO[v]}.
	\]
	In particular
	$x\equiv\xi\smat{1&0\\0&0}\pmod{L\p^0}$
	for some $\xi\in\OO[v]$.

	Assume first that $y \in R\p^\times$, and write
	$y = \smat{a & b \\ \pip^r c & d}$.
	Letting $D = \det(y)$ we have
	\[
		y x y^{-1} =
		\frac\xi D \pmat{da & -ba \\ \pip^r dc & -\pip^r cb}
		\equiv
		\frac\xi D \pmat{da -\pip^r cb& 0 \\ 0 & 0}
		\equiv x
		\pmod{L\p^0}.
	\]
	When $y \notin R\p^\times$ we can assume
	that $y = \smat{0 & 1 \\ \pip^r & 0}$ as in \eqref{eqn:bilgen}.
	Then
	\[
		y x y^{-1}
		=
		\xi\pmat{0 & 0 \\ 0 & 1}
		= -x.
	\]

	The case $e(R\p) = -1$ follows similarly, using the identification given by
	\eqref{eqn:e=-1}.
\end{proof}

\subsubsection*{Dependence on the choices}
For completeness, we end this subsection discussing how these weight functions
depend on the choices made in their definitions.

\medskip

For weight functions of type I, let $w_v$ and $w'_v$ be two such
functions defined using $z$ and $z'$ respectively.
From the proof of Proposition~\ref{prop:wq} we know
that $w_v$ and $w'_v$ are equal up to a constant.
But $w_v(z)=1$ and $w'_v(z)=\pm1$, hence $w'_v = \pm w_v$.

\medskip

For weight functions of type II,
if $z'\in L\p$ also satisfies $\Delta(z')\in\OOx[v]$ then,
using the identifications from \eqref{eqn:e=1} and \eqref{eqn:e=-1},
$z'\equiv\xi z\pmod{L\p^0}$ for some $\xi\in\OOx[v]$.
It follows that the weight function corresponding to $z'$
equals $\chi_v(\xi)\, w_v$.

It remains to see how the choice of character
$\chi_v$ affects $w_v$. Let $\chi'_v$ be another odd character and let $w'_v$
the corresponding weight function.
Assume that $\chi_v$ and $\chi'_v$ are defined modulo $2\pip^r$ with $r \geq 1$.
Since $(1+2\pip^r\OO)^2=1+4\pip^r\OO[v]$ (see \cite[63:8]{omeara}),
there exists a function $\alpha:\OOx[v]/(1+4\pip^r\OO[v])\to S^1$
such that
\begin{equation*}
	\overline{\chi'_v}(\xi)
	=
	\alpha\left(\xi^2\,\Delta(z)\right) \,
	\overline{\chi_v}(\xi)
\end{equation*}
for all $\xi\in\OOx[v]$.
Given $x \in L\p$
there exists $\xi\in\OO[v]$ such that $x\equiv\xi z\pmod{L\p^0}$
and $\Delta(x)\equiv\xi^2\Delta(z)\pmod{4\pip^r}$.
If $\xi\notin\OOx[v]$ then $w_v(x)=w'_v(x)=0$, and if
$\xi\in\OOx[v]$ then
$w_v(x)=\overline{\chi_v}(\xi)$ and $w'_v(x)=\overline{\chi'_v}(\xi)$.
In both cases
\begin{equation*}
	w'_v(x)
	=
	\alpha(\Delta(x))\,w_v(x).
\end{equation*}
In summary, a different choice of $\chi_v$ multiplies $w_v(x)$ by a
\emph{factor of absolute value $1$ that depends only on $\Delta(x)$}.

\begin{rmk}
\label{rmk:oddchi}
The argument above works the same if $\chi'_v$, instead of an odd
character, is an arbitrary \emph{function}
$\chi'_v : \OOx[v]/ (1+2\pip\OO[v]) \to \{\pm1\}$ satisfying that
$\chi'\p(-x)=-\chi'\p(x)$ for all $x$,
thus avoiding the need to do arithmetic with roots of unity coming from the
values of $\chi\p$. 

The theta series computed with this alternative
weight function would not satisfy
Propositions~\ref{prop:modularity} and \ref{prop:hecke_linear}
when $\chi'\p$ is not a character. Nevertheless,
the Fourier coefficients of the modified theta series will be the same up to
multiplication by a complex number of absolute value $1$, hence
Theorem~\ref{thm:formula} would still be valid.
Moreover one can recover the original theta series corresponding to
a character $\chi\p$ dividing the Fourier coefficients by the values
of the function $\alpha$ as defined above.

\end{rmk}

\subsection{Adelic weight functions}

We let $w : \What \to \C$ be the (adelic) weight function given by
\begin{equation}
	\label{eqn:wadelic}
	w(x) = \begin{cases}
		\prod_{v \mid \cond l \cond\chi} w\q(x\q)
        & \text{if $x \in \widehat{L}$,} \\
		0 & \text{otherwise}.
	\end{cases}
\end{equation}
It should be denoted by $w\ssl$, but we avoid this for the sake of a lighter
notation.
Note that when $l \in \Fc$ and $\cond{\chi} = \OO$ the function $w$ is
simply the characteristic function of $\widehat{L}$.

Given $\id a \in \IF$ we let $w(\vardot;\id a)$
denote the weight function supported on $\id{a}^{-1}\widehat L$
given by
\[
	w(x;\id a)
	=
	\left(\chi\,\chic{l}\right)(\xi)\,w(\xi x),
\]
where $\xi \in \hatx F$ is such that $\xi \OO = \id a$.
Note that, by \eqref{eqn:wqhomog} and \eqref{eqn:wphomog},
if $\id m \subseteq \OO$ is prime to $2 \id N \cond l$
we have that
\begin{equation}
	w(\vardot; \id m \id a) \vert_{\id{a}^{-1}\widehat L}
	= \left(\chi_*\,\chic[*]{l}\right)(\id m)\,w(\vardot; \id a).
	\label{eqn:wsegda}
\end{equation}

\subsection{Theta series}

For each $x \in \hatx B$ let $L_x \subseteq W$ be the lattice given by
$L_x = R_x / \OO$.
Then we consider the weight function $w_x : \What \to \C$ supported on
$\widehat{L_x}$ and given by
\begin{align}\label{eqn:wx}
	w_x(y) = \chic{l} (\norm x) \, w(x y x^{-1}).
\end{align}
Then by \eqref{eqn:wqtransp} and \eqref{eqn:wptransp} we have that
\begin{equation}\label{eqn:transf_wx}
	w_{z x \gamma}(\gamma^{-1} y \gamma) = \delta\ssl(z)\,w_x(y)
	\qquad \forall\, z \in \Bil(R), \, \gamma \in B^\times.
\end{equation}

For $x \in \hatx B$ and $P \in V_\mathbf{k}$ we consider the theta series
$\tlu_{x,P}$ given by
\begin{equation}\label{eqn:theta}
	\tlu_{x,P}(z)
	= \sum_{y\in \id b^{-1} L_x} w_x(y; \id b) \, P(y)
	\, \exp_F(\disc(y) / l , z/2)\,.
\end{equation}
Here $\id b$ is the unique ideal such that $(l,\id b) $ is a
fundamental discriminant.
Note that, since $\id b^{-1} L_x$ has the involution $y\mapsto-y$,
this theta series is trivially zero unless
\begin{equation}\label{eqn:heegnerS}
	\chi_\mba(-1) \, \sgn(l) \, (-1)^\mbk = 1
	\,.
\end{equation}
Note that by
Proposition \ref{prop:modularity} below this is consistent with Remark
\ref{rmk:mholozero}.
Furthermore, using \eqref{eqn:transf_wx} we get that
\begin{equation}
	\tlu_{z x\gamma,P\cdot \gamma} = \delta\ssl(z)\,\tlu_{x,P}\, \qquad
	\forall\, z \in \Bil(R),\, \gamma\in B^\times. \label{eqn:theta_transf}
\end{equation}

\begin{rmk}
	\label{rmk:computations}
	If the classes in $\cl(R)$ are represented by elements $x \in \hatx B$
	such that $x\p \in \Bil(R\p)$ for every $\vii$ and $x\q \in
	R\q^\times$ for every $v \mid \cond l$, then the weight functions $w_x$
	are supported on $\widehat L$ for every $x$; moreover,
	by \eqref{eqn:wqtransp} and \eqref{eqn:wptransp} they are given by 
	\begin{equation}
		\label{eqn:wxw}
		w_x(y) =
		\chic[*]{l}(\norm x) \,
		\Bigl(\prod_{\vii} \iota\p(x\p)\Bigr) \,
		w(y)
		\,.
	\end{equation}
	This can be useful for making explicit computations.
\end{rmk}

\section{Modularity and the theta map}\label{sect:modularity}

The results in this section generalize \cite[Sects.\ 3 and~4]{w2}
in order to admit (type II) weight functions and skew-holomorphic forms.
We focus on the key modifications needed in this more general setting,
referring the reader to \cite{w2} for more details.

\medskip

We let $l$ and $\chi$ be as in Sect.~\ref{sect:construction}.
We consider the function $\ah \in \{\pm1\}^\mba$ given by $\ah_v = -\sgn(l_v)$.
We denote
\newcommand{\Nchi}{{\id N}_\chi}
$
	\Nchi
	=
	\lcm\left(\id N, {\cond{\chi}}^{\!\!2}\right).
$

\begin{prop}\label{prop:modularity}

	Let $\tlu_{x,P}$ be the theta series defined by \eqref{eqn:theta}. Then

	\begin{enumerate}

		\item We have $\tlu_{x,P} \in \Mhku$.
		\item For every $D$ such that $-lD \in F^+\cup\{0\}$ and for every
			$\id a \in \IF$ we have that
			\begin{equation}\label{eqn:cfourier}
				\lambda\left(D,\id a; \tlu_{x,P}\right) = \frac1{\idn a}
				\sum_{y \in \mathcal{A}_{\Delta,\id c}(L_x)}
				w_x(y;\id c) \, P(y) \,,
			\end{equation}
			where $\Delta = lD$ and $\id c = \id{ab}$, and
			\[
				 \mathcal{A}_{\Delta,\id c}(L_x) =
			 \set{y \in \id c^{-1}L_x}{\disc(y) = \Delta}.
			\]
		\item If $l\notin \Fc$, $\cond{\chi} \neq \OO$ or $\mbk \neq \mathbf{0}$,
			then $\tlu_{x,P}$ is cuspidal.
	\end{enumerate}

\end{prop}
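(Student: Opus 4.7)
The plan is to realize $\tlu_{x,P}$ as a specialization of the classical theta kernel for the Weil representation attached to the ternary quadratic space $(W,\Delta/l)$, and to deduce modularity, the Fourier expansion and cuspidality from the local properties of the weight function $w_x$ established in Propositions~\ref{prop:wq} and~\ref{prop:wp}. The overall strategy and most of the bookkeeping were already carried out in \cite{w2} in a more restrictive setting; the main modifications here are to accommodate type~I weight functions at primes $v\mid\cond l$, arbitrary odd characters $\chi\p$ at primes $\vii$, and the skew-holomorphic signature $\ah_v=-\sgn(l_v)$ at the archimedean places.

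For part~(a) I would proceed place by place. At $v\nmid 2\id N\cond l$ the function $w_x$ is the characteristic function of the unimodular lattice $(L_x)_v$ and contributes trivially to the level. At $v\mid\cond l$, the $\piv L_v$-periodicity of $w_v$ combined with the transformation law~\eqref{eqn:wqhomog} yields the character $\chic[*]{l}$ and contributes $\cond l$ to the level. At $\vii$, the periodicity modulo $L_v^0$ together with~\eqref{eqn:wphomog} yields the character $\overline{\chi\p}$, and the $v$-adic depth of $L_v^0$ accounts for the factor $\val_v(4\Nchi)$ in the level. Assembling these local Weil-representation computations with the standard archimedean behaviour of a harmonic polynomial of degree $k_v$ against a Gaussian on a definite ternary form of sign $-\sgn(l_v)$ produces $\tlu_{x,P}\in\Mhku$ with character $\chi$.

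For part~(b) I would substitute the explicit form of $\tlu_{x,P}$ into the definition~\eqref{eqn:fourier} and unfold using strong approximation for $\SL_2$. Writing $\id c=\id{ab}$ and rescaling by a finite idele representing $\id a$, the sum over $y\in\id b^{-1}L_x$ contributing to $\lambda(D,\id a;\tlu_{x,P})$ reduces to the sum over $y\in\id c^{-1}L_x$ with $\disc(y)=lD$; the prefactor $1/\idn a$ comes from the normalization built into~\eqref{eqn:fourier}, and each $y$ carries the weight $w_x(y;\id c)$ by definition of the twisted weight function. For part~(c), cuspidality reduces to the vanishing of the constant term of $\tlu_{x,P}$ at every cusp; at infinity only $y=0$ contributes, and $P(0)=0$ when $\mbk\neq\mathbf0$, while $w_x(0)=0$ as soon as $\cond l\neq\OO$ (since $0\in\piv L_v$ annihilates the first branch of $w_v$) or $\cond\chi\neq\OO$ (since $0/\ll{z,z}\notin\OOx[v]$). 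The other cusps are handled by a standard translation argument using the Weil representation, reducing each to the computation at infinity.

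The main obstacle will be the Weil-representation bookkeeping at the primes $\vii$, particularly at dyadic primes where $\chi\p$ may have even conductor: one must check that the transformation of $w\p$ under the image of $\Ga{4\Nchi}$ interacts correctly with the $2$-adic subtleties of Shimura's automorphy factor $K_\ah$ to produce exactly the character $\chi_{\id c}$ in~\eqref{eqn:modularity}, with no extra refinement of the level needed.
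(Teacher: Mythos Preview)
Your overall strategy matches the paper's: realize $\tlu_{x,P}$ as a theta series attached to a Schwartz function $\eta$ and invoke Shimura's transformation law \cite[Prop.~11.8]{shim-hh} once the behaviour of $\eta$ under $\Ga{4\Nchi}$ is pinned down from the local periodicity and homogeneity of the weight functions. Parts~(a) and~(b) are essentially as in the paper (and in \cite{w2}), so your sketch is fine there.

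The gap is in part~(c). Checking $w_x(0)=0$ or $P(0)=0$ only gives vanishing of the constant term at the cusp $\infty$; your claim that ``the other cusps are handled by a standard translation argument \dots\ reducing each to the computation at infinity'' is not how it works. For a general cusp you need ${}^{\beta}\eta(0)=0$ for \emph{every} $\beta\in\SL_2(F)$, and this is not obtained by conjugating back to $\beta=1$. The paper's argument (Lemma~\ref{lem:cusp_eta}) uses the Bruhat decomposition $\beta=\smat{1/c&a\\0&c}\iota\smat{1&d/c\\0&1}$ together with the explicit Weil action of $\iota$ as a Fourier transform to write ${}^{\beta}\eta\p(0)$ as $\int_{W\p}\exp_v(d\,\Delta(x)/2l)\,\eta\p(x)\,dx$; this integral then vanishes because $\eta\p$ is an \emph{odd} function at any place $v\mid\cond l\cond\chi$ (from \eqref{eqn:wqhomog} or \eqref{eqn:wphomog} with $\xi=-1$), while the extra factor $\exp_v(d\,\Delta(x)/2l)$ is even. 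So the crucial ingredient you are missing is that oddness of $\eta\p$ survives the action of $\iota$ (equivalently, Fourier transform preserves parity), not a reduction to the identity cusp.
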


In order to prove this modularity result we consider the left action of
a certain subgroup of $\SL_2(F)$ on the Schwartz--Bruhat space
$\mathcal S(\What)$
as in \cite[Sect.~11]{shim-hh}.
This action depends on the bilinear form $(x,y) \mapsto \ll{x,y}/l$
and is denoted by $(\beta, \eta) \mapsto {^{\beta}\eta}$.
The following results are proved in
\cite[Lem.~11.6 and Prop.~11.7]{shim-hh}.

\begin{prop}
Let $\eta \in \mathcal S(\What)$.
\begin{enumerate}
\item Let $\beta = \left(\begin{smallmatrix} 1 & b \\ 0 & 1
\end{smallmatrix} \right)$, with $b \in F$. Then
\begin{equation}\label{eqn:weil_parab}
	{^{\beta}{\eta}(x)}
	= \exp_\mbf \left(b\,\Delta(x)/2l\right) \,\eta(x).
\end{equation}
\item Let $\iota = \left( \begin{smallmatrix} 0 & -1 \\ 1 & 0\end{smallmatrix}
\right)$. Then
\begin{equation}\label{eqn:weil_fourier}
	{^{\iota}\eta}(x) = c \int_{\What} \eta(y)
	\exp_\mbf \left(-\ll{x,y}/2l\right) \, dy,
\end{equation}
where $c$ is a nonzero constant depending on the bilinear form.
\end{enumerate}

\end{prop}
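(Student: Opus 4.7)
The plan is to recognize both formulas as the standard values of the Weil representation of $\SL_2(F)$ on $\mathcal{S}(\What)$ attached to the quadratic form $Q(x) = \Delta(x)/(2l)$, whose associated symmetric bilinear form is precisely $(x,y) \mapsto \ll{x,y}/l$. The action in the statement is evaluated on the two distinguished generators coming from the upper-triangular unipotent subgroup and the Weyl element $\iota$. Since $\mathcal{S}(\What) = \bigotimes'_{v\in\mbf}\mathcal{S}(W_v)$ and the action $\eta\mapsto {^\beta\eta}$ decomposes as a restricted tensor product of local Weil actions $\omega_v$ of $\SL_2(F_v)$ (or its metaplectic cover) on $\mathcal{S}(W_v)$, it suffices to establish the analogous identities place by place and multiply over $v \in \mbf$.

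At each finite $v$, the local Weil representation is determined by its values on the standard generators of $\SL_2(F_v)$. For part (a), I would invoke the archetypal formula $\omega_v(\smat{1&b\\0&1})\eta_v(x) = \exp_v(b Q_v(x))\eta_v(x)$ with $Q_v(x) = \Delta(x)/(2l)$; multiplying over $v \in \mbf$ gives \eqref{eqn:weil_parab}. For part (b), the Weyl element $\iota$ acts by $\gamma_v$ times the local Fourier transform with respect to the self-dual Haar measure on $W_v$ and the bilinear kernel $\exp_v(-\ll{x,y}/l)$, where $\gamma_v$ is the local Weil index of $Q_v$. Taking the restricted tensor product, the global constant becomes $c = \prod_v \gamma_v$, a finite product of eighth roots of unity, which is in particular nonzero.

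Both local identities appear verbatim in \cite[Lem.~11.6 and Prop.~11.7]{shim-hh}, so the real work is to translate Shimura's conventions (choice of additive character, normalization of Haar measure, and sign/scaling of the bilinear form) into ours and verify that his formulas specialize to \eqref{eqn:weil_parab} and \eqref{eqn:weil_fourier}. The main obstacle, such as it is, is this bookkeeping: in particular the factor $2l$ versus $l$ in the denominators is controlled by the identity $Q(x) = \ll{x,x}/(2l) = \Delta(x)/(2l)$ relating $Q$ to its polarization $B$, and one must check that the self-dual measure built from $\exp_\mbf$ and the bilinear form $\ll{\cdot,\cdot}/l$ is the one with respect to which the integral in \eqref{eqn:weil_fourier} is taken. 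Once the dictionary is in place the proof is complete.
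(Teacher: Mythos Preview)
Your proposal is correct and matches the paper's approach: the paper does not supply its own argument but simply attributes the proposition to \cite[Lem.~11.6 and Prop.~11.7]{shim-hh}, exactly as you do. Your additional framing in terms of the Weil representation and the local-global decomposition is accurate context, but the substance---reduce to Shimura's cited results---is the same.
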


We assume for simplicity that $x=1$ in \eqref{eqn:theta}, and denote
$\vartheta_P = \tlu_{1,P}$.
The function $\eta$ we consider from now on is
\[
	\eta(x) =
	\begin{cases}
        w(x;\id b) & \text{if $x \in \widehat{L}$}, \\
		0 & \text{otherwise}.
	\end{cases}
\]
Then, following the notation from \cite[Sect.~11]{shim-hh},
$\vartheta_P$ equals the theta series $z \mapsto f(z,\eta)$
defined by the function $\eta$ (and the polynomial $P$).

\begin{lemma}\label{lem:eta_aux}
	Let $x \in \What$.
	\begin{enumerate}
		\item We have $\eta(\xi x) = \left(\chi\chic[\cond l]{l}\right)(\xi^{-1}) \,
			\eta(x)$ for every $\xi \in \hatOOx$.
		\item If $\eta(x) \neq 0$, then
			$\Delta(x) / l \in \hatOO$.
		\item We have ${^{\iota}\eta}(\xi x) = \left(\chi\chic[\cond l]{l}\right)
			(\xi) \, {^{\iota}\eta}(x)$ for every $\xi \in \hatOOx$.
		\item If ${^{\iota}\eta}(x) \neq 0$, then
			$\Delta(x) / l \in
			\id{d}^{-2}\left(\Nchi\right) ^{-1} \hatOO$.
	\end{enumerate}
\end{lemma}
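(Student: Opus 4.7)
The plan is to treat (a) and (c) as formal consequences of the local homogeneity properties in Propositions~\ref{prop:wq} and~\ref{prop:wp}, and to derive (b) and (d) from a place-by-place analysis of the support of $\eta$ and of its Weil--Fourier transform.

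For (a), let $\xi_0\in\hatx F$ with $\xi_0\OO=\id b$, so that $\eta(\xi x)=(\chi\,\chic{l})(\xi_0)\,w(\xi_0\xi x)$. The scaling by $\xi\in\hatOOx$ distributes over the local factors of $w$: at $v\mid\cond l$ it pulls out $\chic[v]{l}(\xi_v)$ by \eqref{eqn:wqhomog}, and at $v\mid\cond\chi$ it pulls out $\overline{\chi_v}(\xi_v)$ by \eqref{eqn:wphomog}. Since $\chi$ is unramified outside $\cond\chi$ and $\chic{l}$ is quadratic, these local factors collect to $(\chi\chic[\cond l]{l})(\xi^{-1})$. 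For (c), apply \eqref{eqn:weil_fourier} to $^{\iota}\eta(\xi x)$ and substitute $y\mapsto\xi^{-1}y$; the Jacobian is trivial because $\abs{\xi}_\mbA=1$ for $\xi\in\hatOOx$, and invoking (a) on $\eta(\xi^{-1}y')$ extracts the factor $(\chi\chic[\cond l]{l})(\xi)$ from the integral.

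For (b), I would verify $\val_v(\Delta(x))\geq\val_v(l)$ at each finite place. Since $\Delta(\xi_0 x)=\xi_0^2\Delta(x)$, this reduces to comparing $\val_v(\Delta(\xi_0 x))$ with $\val_v(l)+2\val_v(\id b)$. The nonvanishing of $\eta$ provides the lower bound: $\Delta(\xi_0 x)\in\OOx[v]$ at $v\mid\cond\chi$ by \eqref{eqn:sop_wii}, $\pip\mid\Delta(\xi_0 x)$ at $v\mid\cond l$ by the definition of the type I weight, and $\Delta(\xi_0 x)\in\OO[v]$ otherwise from $\xi_0 x\in\widehat L$. The valuation of $\id b$ is read off from the definition of fundamental discriminant: $\val_v(\id b)=-\val_v(l)/2$ when $v\nmid\cond l$ and $-(\val_v(l)-1)/2$ at odd $v\mid\cond l$; at $v\mid 2$, \ref{hyp:Hl} forces $l$ to be unramified and again gives $\val_v(\id b)=-\val_v(l)/2$. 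Substitution yields the claim.

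For (d), I would compute each local support of $^{\iota}\eta_v$ from the periodicity of $\eta_v$. Fourier duality under the bilinear form $\langle\cdot,\cdot\rangle/l$ with additive character $\exp_v$ (whose kernel is $\id d_v^{-1}$) confines $^{\iota}\eta_v$ to the $2l\,\id b\,\id d_v^{-1}$-dilate of the dual of the appropriate periodicity lattice: $L_v^*$ at $v\nmid\cond l\cond\chi$, $\pip^{-1}L_v^*$ at $v\mid\cond l$ (by the $\pip L_v$-periodicity of Proposition~\ref{prop:wq}), and $(L_v^0)^*$ at $v\mid\cond\chi$ (by Proposition~\ref{prop:wp}). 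Identifying these dual lattices explicitly via the local models \eqref{eqn:e=1} and \eqref{eqn:e=-1}, and evaluating $\val_v(\Delta)$ on each support, should yield $\val_v(\Delta(x)/l)\geq-2\val_v(\id d)-\val_v(\Nchi)$. The main obstacle will be the analysis at $v\mid 2$, where $L_v$ ceases to be unimodular, the formula for $\val_v(\id b)$ needs~\ref{hyp:Hl} to apply, and one must track $L_v^*$ using a genuine $\OO[v]$-basis of $L_v$ rather than the naive traceless section, which is only defined after inverting $2$.
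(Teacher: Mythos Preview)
Your approach is essentially the same as the paper's. Parts (a), (b), (c) match the paper's proof almost verbatim: (a) from the local homogeneity \eqref{eqn:wqhomog} and \eqref{eqn:wphomog}, (b) from the support condition built into the type~I weight (the paper's one-line argument is just the condensed version of your valuation bookkeeping), and (c) from (a) together with the substitution $y\mapsto\xi^{-1}y$ in \eqref{eqn:weil_fourier}.

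For (d) the method is again the same --- periodicity of $\eta$, Fourier duality, then an explicit local evaluation of $\Delta$ on the dual lattice --- but the packaging differs slightly. The paper introduces a single global lattice $\Lambda$ (equal to $L_v^0$ at $v\mid\cond\chi$ and $L_v$ elsewhere), asserts that $\eta$ is $l\id b\widehat\Lambda$-periodic, and then computes $\Lambda_v^\sharp$ uniformly from the matrix models \eqref{eqn:e=1} and \eqref{eqn:e=-1}; the places $v\mid\cond l$ are handled by citing \cite[Lem.~3.1]{w2}. You instead propose to work place by place from the outset and to treat $v\mid\cond l$ inline via the $\pip L_v$-periodicity. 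Since $l\id b^2$ generates $\cond l$, your dilation factor $2l\id b\id d_v^{-1}$ matches the paper's $2(\id d\id b)^{-1}$ at every place, so the two computations produce the same support.

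Your worry about $v\mid 2$ is more cautious than necessary. The local models \eqref{eqn:e=1} and \eqref{eqn:e=-1} already furnish an honest $\OO[v]$-basis of $L_v$ (and of $L_v^0$) at every finite place, so the dual-lattice identification and the evaluation of $\Delta$ on it go through uniformly; the paper does not single out even primes in the proof of~(d). The hypothesis \ref{hyp:Hl} enters only to guarantee $v\nmid\cond l$ when $v\mid 2$, exactly as you already used it in~(b).
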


\begin{proof}

The claim in (a) follows from \eqref{eqn:wqhomog} and \eqref{eqn:wphomog}.
The claim in (b) follows by the definition of the type I local weight functions.
Furthermore, (c) follows from (a) and \eqref{eqn:weil_fourier}.

To prove (d) we consider the lattice $\Lambda \subseteq W$ given by
\[
	\Lambda\p =
	\begin{cases}
        L\p^0 & \text{if $\vii$,}\\
		L\p   & \text{otherwise}.
	\end{cases}
\]
Propositions \ref{prop:wq} and \ref{prop:wp} imply that $\eta$ is $l\id b
\widehat \Lambda$-periodic, so using \eqref{eqn:weil_fourier} we get that
\[
	{^{\iota}\eta}(x)
	= \exp_{\mathbf{f}}\left(-\ll{x,y}/2\right) \, {^{\iota}\eta}(x)
\]
for every $y \in \id b \widehat\Lambda$.
Hence if ${^{\iota}\eta}(x) \neq 0$ we have that
$\ll{x,y}/2 \in \id{d}^{-1}$ for every $y \in \id b \widehat\Lambda$,
i.e., $x/2 \in (\id d \id b \widehat\Lambda)^\sharp = (\id d \id b)^{-1}
\widehat\Lambda^\sharp$.
Let $\zeta \in \hatx F$ be such that $\zeta \OO = \id d \id b$.
We claim that
\begin{equation}\label{eqn:Ddual}
	\Delta(\zeta x) \in \left(\Nchi\right)^{-1} \hatOO\,,
\end{equation}
which we verify locally.
Assume first that $e(R\p) = 1$.
Under the identification from \eqref{eqn:e=1} we see that if $z \in
\Lambda\p^\sharp$ then
\[
	z =
	\pmat{a/4\pip & b/4\pip^r \\ c/4 & 0}
\]
with $a,b,c \in \OO[v]$.
Then $\Delta(2z) = a^2/4\pip^2 + bc/\pip^r$, which implies that $\Delta(2z)
\in \left(\Nchi\right)^{-1} \OO[v]$, thus giving \eqref{eqn:Ddual}.
If $e(R\p) = -1$ a similar reasoning applies using \eqref{eqn:e=-1}.

Finally, since $\cond l$ is prime to $\id{N}$ \eqref{eqn:Ddual} implies that
$\Delta(\zeta x)$ is $\cond l$-integral.
To prove~(d) it remains to show that
$\cond l\mid\Delta(\zeta\q x\q)$.
Since $\cond l$ is square-free,
it suffices to show that $v
\mid \Delta(\zeta\q x \q)$ for every $v \mid \cond l$,
which follows as in \cite[Lem.~3.1]{w2} with no modifications.
\end{proof}

\begin{lemma}\label{lem:cusp_eta}

	Assume that $l \notin \Fc$ or that $\cond{\chi} \neq \OO$.
	Let $\beta \in \SL_2(F)$.
	Then ${^\beta}\eta(0) = 0$.

\end{lemma}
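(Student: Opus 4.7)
The plan is to use the Bruhat decomposition $\SL_2(F) = B(F) \sqcup B(F)\,\iota\, B(F)$ to reduce to two facts: (i) $\eta(0) = 0$, and (ii) $\int_{\widehat W} \exp_\mbf(c\,\Delta(y)/2l)\, \eta(y)\, dy = 0$ for every $c \in F$. Under either hypothesis there exists a place $v_0$ with $v_0 \mid \cond l$ or $v_0 \mid \cond\chi$. Fact (i) is immediate from the definitions of the local weight functions, since $w_{v_0}(0) = 0$: a type I function vanishes on $\varpi_{v_0} L_{v_0}$, and a type II function requires $\ll{x,z}/\ll{z,z} \in \OO[v_0]^\times$, which fails at $x=0$.

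For (ii), I would factor the integral as $\prod_v I_v(c)$ and show $I_{v_0}(c) = 0$ by exhibiting a measure-preserving symmetry under which the integrand is negated. When $v_0 \mid \cond\chi$, the substitution $y \mapsto -y$ preserves $\Delta$ while $w_{v_0}(-y) = -w_{v_0}(y)$ by \eqref{eqn:wphomog} and the oddness of $\chi_{v_0}$, yielding $I_{v_0}(c) = -I_{v_0}(c)$. When $v_0 \mid \cond l$, the coprimality of $\cond l$ with $\id N$ forces $R_{v_0}$ to be a maximal order in the split algebra $M_2(F_{v_0})$, so the reduced norm $\norm : R_{v_0}^\times \to \OO[v_0]^\times$ is surjective; picking $y_0 \in R_{v_0}^\times$ with $\chic[v_0]{l}(\norm y_0) = -1$, the substitution $y \mapsto y_0\, y\, y_0^{-1}$ preserves $\Delta$ and has Jacobian $1$ (it acts as an element of $\mathrm{SO}(\Delta)$), while \eqref{eqn:wqtransp} gives $w_{v_0}(y_0 y y_0^{-1}) = -w_{v_0}(y)$. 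Either way $I_{v_0}(c) = 0$.

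The Bruhat reduction itself is then routine via the explicit Weil formulas for $B(F)$ and $\iota$. For upper triangular $\beta = d(a)\,n(b) \in B(F)$, formula \eqref{eqn:weil_parab} together with the standard diagonal Weil action ${^{d(a)}\psi}(x) = |a|^{3/2}\gamma(a)\,\psi(ax)$ give ${^\beta \eta}(0) = |a|^{3/2}\gamma(a)\,\eta(0) = 0$ by (i). For $\beta = p_1\,\iota\,p_2$ with $p_i \in B(F)$, writing $p_2 = d(a_2)\,n(a_2^{-1}b_2)$ and combining \eqref{eqn:weil_parab}, \eqref{eqn:weil_fourier} with a change of variables expresses ${^\beta \eta}(0)$ as a scalar multiple of $\int \exp_\mbf(c\,\Delta(y)/2l)\,\eta(y)\,dy$ for $c = a_2^{-1}b_2 \in F$, which vanishes by (ii). The main technical content lies in the local symmetry argument in (ii); everything else is bookkeeping with the explicit Weil formulas.
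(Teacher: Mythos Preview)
Your argument is correct and follows essentially the same route as the paper: reduce via Bruhat to the vanishing of $\eta(0)$ and of the local integral $\int_{W_{v_0}} \exp_{v_0}(c\,\Delta(y)/2l)\,\eta_{v_0}(y)\,dy$, then kill the latter by a symmetry that fixes $\Delta$ and negates the weight function. For $v_0\mid\cond\chi$ both you and the paper use the involution $y\mapsto -y$ together with the oddness of $\chi_{v_0}$ (the paper phrases this via Lemma~\ref{lem:eta_aux}(a)).

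The one genuine difference is the treatment of the case $v_0\mid\cond l$. The paper does not argue this case here at all: it invokes \cite[Lem.~3.3]{w2}, which already establishes ${^\beta}\eta(0)=0$ whenever $l\notin(F^\times)^2$, leaving only the $\cond\chi$ case to be handled. Your conjugation argument via \eqref{eqn:wqtransp}---picking $y_0\in R_{v_0}^\times=\GL_2(\OO[v_0])$ with $\chic[v_0]{l}(\det y_0)=-1$, possible since $\chic[v_0]{l}$ is ramified and $\det$ is surjective onto $\OOx[v_0]$---is a clean, self-contained alternative. Note that the naive $y\mapsto -y$ symmetry would \emph{not} suffice here, since $\chic[v_0]{l}(-1)$ need not equal $-1$; your use of the full conjugation symmetry is exactly what is required. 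The payoff is a proof that does not depend on the earlier paper, at the cost of writing out a step the authors had already recorded elsewhere.
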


\begin{proof}
	By \cite[Lem.~3.3]{w2} it suffices to prove that ${^{\beta}}\eta\p (0) =
	0$ for every $\beta \in \SL_2(F)$ and for $\vii$.
	Let $\beta = \smat{a & b \\ c & d}$ and write 
	\[
		\beta
		=
		\pmat{ 1/c & a \\ 0 & c }
		\iota
        \pmat{1& d/c \\ 0 & 1}
		.
	\]
	By \cite[Prop.~11.5]{shim-hh} and \eqref{eqn:weil_parab} we can assume that
	$c=1$ and $a = 0$.
	Using \eqref{eqn:weil_parab} again and \eqref{eqn:weil_fourier} we get that
	\[
		{^\beta}\eta\p(0)
		=
		c \int_{W\p} \exp_v \left(d\Delta(x)/2l\right) \eta\p(x) \,dx.
	\]
	Then the result follows, since item (a) of Lemma~\ref{lem:eta_aux}
	implies that $\eta\p$ is an odd function if $\vii$.
\end{proof}

\begin{prop}\label{prop:weil_gral}

Let $\beta \in \Ga{4\Nchi}$.
Then
$
	{^{\beta}\eta} =
	\overline{\chi_{\cond{\chi}}}(a_\beta) \,
	\chic[\cond{-1}]{-1}(a_\beta) \,
	\eta .
$
\end{prop}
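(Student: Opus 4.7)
The plan is to adapt the proof of \cite[Prop.~3.4]{w2} to our more general setting (non-Eichler orders and arbitrary Hecke character $\chi$), using the Weil representation formulas \eqref{eqn:weil_parab}--\eqref{eqn:weil_fourier} together with Lemma~\ref{lem:eta_aux} in a place-by-place analysis.

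First, I would handle the easy case $c_\beta=0$: here $\beta=\smat{a & b\\0 & a^{-1}}$ with $a\in\OOx$ and $b\in 2\id d^{-1}$, so \eqref{eqn:weil_parab} produces a phase $\exp_\mbf(b\Delta(x)/(2al))$ which is trivial on $\operatorname{supp}\eta$ by Lemma~\ref{lem:eta_aux}(b), and the diagonal action contributes $(\chi\,\chic[\cond l]{l})(a)$ by Lemma~\ref{lem:eta_aux}(a).

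For $c_\beta\neq 0$ I would use the Bruhat decomposition
\[
\beta \;=\; \pmat{1 & a_\beta/c_\beta\\0&1}\pmat{1/c_\beta&0\\0&c_\beta}\iota\pmat{1&d_\beta/c_\beta\\0&1}\,,
\]
and compute ${^\beta\eta}$ factor by factor. The innermost parabolic contributes a phase $\exp_\mbf(d_\beta\Delta(x)/(2lc_\beta))$ which is trivial on $\operatorname{supp}\eta$ by Lemma~\ref{lem:eta_aux}(b) and the congruence $c_\beta\in 2^{-1}\Nchi\id d$. Applying $\iota$ yields ${^\iota\eta}$, and the subsequent diagonal scaling by $c_\beta$ produces the factor $(\chi\,\chic[\cond l]{l})(c_\beta)$ via Lemma~\ref{lem:eta_aux}(c). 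Finally the outermost parabolic contributes another phase which, by Lemma~\ref{lem:eta_aux}(d) combined with the same congruence on $c_\beta$, is trivial on $\operatorname{supp}{^\iota\eta}$.

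To rewrite $(\chi\,\chic[\cond l]{l})(c_\beta)$ in the claimed form $\overline{\chi_{\cond\chi}}(a_\beta)\,\chic[\cond{-1}]{-1}(a_\beta)$, I invoke the product formula $\chi|_{F^\times}=1$ together with the identity $a_\beta d_\beta\equiv 1\pmod{c_\beta\OO}$ arising from $\det\beta=1$, and the fact that $\cond l$ is coprime to both $\cond\chi$ and $2\OO$. The extra factor $\chic[\cond{-1}]{-1}(a_\beta)$ originates in the quadratic residue symbol appearing in the metaplectic cocycle at the $2$-adic places: this is the half-integral-weight analogue of Shimura's quadratic reciprocity argument in \cite[Sect.~11]{shim-hh}. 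The main obstacle will be the careful $2$-adic bookkeeping needed to isolate precisely the $\chic[\cond{-1}]{-1}(a_\beta)$ contribution from the Gauss sums produced by the Fourier transform \eqref{eqn:weil_fourier}; every other step is essentially a direct verification building on Lemma~\ref{lem:eta_aux}.
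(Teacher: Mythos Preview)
Your Bruhat-decomposition argument does not work as written. The innermost unipotent $\smat{1&d_\beta/c_\beta\\0&1}$ contributes the phase $\exp_\mbf\!\bigl((d_\beta/c_\beta)\,\Delta(x)/(2l)\bigr)$; the congruence $c_\beta\in 2\Nchi\id d$ forces $c_\beta$ to have \emph{positive} valuation at every $v\mid\Nchi$, so $d_\beta/c_\beta$ has \emph{negative} valuation there, and Lemma~\ref{lem:eta_aux}(b) (which only gives $\Delta(x)/l\in\hatOO$) is nowhere near enough to kill this phase. The same issue recurs at the diagonal step: you invoke Lemma~\ref{lem:eta_aux}(c) with $\xi=c_\beta$, but that lemma requires $\xi\in\hatOOx$, which $c_\beta$ plainly is not. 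The Bruhat decomposition is the wrong tool here precisely because it divides by $c_\beta$.

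The paper instead invokes \cite[Prop.~3.4]{w2} (with $u=-1$) as a black box, obtaining ${}^\beta\eta(x)=\chic[\cond{-l}]{-l}(a_\beta)\,\eta(\xi x)$ for a unit $\xi$ with $\xi_v=a_\beta$ at all $v\mid 2\id N\cond{-l}$; Lemma~\ref{lem:eta_aux}(a) then gives $\eta(\xi x)=(\chi_{\cond\chi}\chic[\cond l]{l})(a_\beta)^{-1}\eta(x)$, and the claim follows from the character identity $\chic[\cond{-l}]{-l}(a_\beta)\,\chic[\cond l]{l}(a_\beta)=\chic[\cond{-1}]{-1}(a_\beta)$ (using $\cond{-l}=\cond l\,\cond{-1}$ and that $a_\beta$ is a local unit along $\cond{-l}$). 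So the factor $\chic[\cond{-1}]{-1}(a_\beta)$ arises from this character manipulation, not from a separate $2$-adic Gauss-sum computation as you suggest. If you want to rederive the cited result rather than quote it, the workable local factorisation at the bad primes is an Iwahori-type one such as $\beta=\smat{1&0\\c_\beta/a_\beta&1}\smat{a_\beta&b_\beta\\0&a_\beta^{-1}}$, handling the lower unipotent via $\iota^{-1}\smat{1&-c_\beta/a_\beta\\0&1}\iota$ together with Lemma~\ref{lem:eta_aux}(d); this is exactly what part~(d) is designed for.
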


\begin{proof}
Following the proof of \cite[Prop.~3.4]{w2}, where we take $u=-1$, we get that
there exists $\xi \in \hatOOx$ with $\xi_v = a_\beta$ for every $v \mid
2\id N\cond{-l}$ such that
\begin{equation*}
	{^{\beta}\eta}(x) = 
	\chic[\cond{-l}]{-l}(a_\beta) \,
	\eta(\xi x)
\end{equation*}
for every $x \in \widehat{W}$.
Since $\cond l$ is prime to $\cond{-1}$ we have that $\cond{-l} = \cond l
\cond{-1}$.
Furthermore, we have that $\chic[\cond l]{-1}(a_\beta) = 1$ and $\chic[\cond{
-1}]{l}(a_\beta) = 1$, since $a_\beta \in \OOx[\pv]$ for $\pv\mid \cond{-l}$.
These facts imply that $\chic[\cond{-l}]{-l} (a_\beta) \chic[\cond{l}]{l} (a_\beta)
= \chic[\cond{-1}]{-1} (a_\beta)$.
Hence the result follows using part (a) of Lemma~\ref{lem:eta_aux},
which implies that
\[
	\eta(\xi x)
	=
	\left(\chi_{\cond{\chi}}\chic[\cond l]{l}\right)(a_\beta^{-1}) \, \eta(x).
\]
\end{proof}

\begin{proof}[Proof of Proposition~\ref{prop:modularity}]

The modularity of $\vartheta_P(z) = f(z,\eta)$ follows,
as in the proof of \cite[Prop.~3.1~(a)]{w2},
using Proposition \ref{prop:weil_gral} and \cite[Prop.~11.8]{shim-hh},
which asserts that for every $\beta \in \Ga{4 \Nchi}$ we have 
\begin{equation*}
	f\left(\beta z,{^\beta}\eta\right)
	=
	\chic[\cond{-1}]{-1}(a_\beta) \,
	K_\ah(\beta,z)\, f(z),
\end{equation*}
since the automorphy factor defined in \cite[(11.19)]{shim-hh} equals
$\chic[\cond{-1}]{-1}(a_\beta) \, K_\ah(\beta,z)$.

The proof of \eqref{eqn:cfourier} follows as in \cite[Prop.~3.1~(b)]{w2} with no
modifications and
the cuspidality of $\vartheta_P$
follows from Lemma~\ref{lem:cusp_eta}
as in the proof of \cite[Prop.~3.1~(c)]{w2}.
\end{proof}

We consider the theta map $\tlu : \MkR \to \Mhku$ given by
\begin{equation}\label{eqn:theta_map}
	\tlu(\varphi) =
	\sum_{x\in \cl(R)} \tfrac{1}{t_x}\,\tlu_{x,\varphi(x)}.
\end{equation}
This map is well defined by \eqref{eqn:phi=sum_phix} and
\eqref{eqn:theta_transf} and it satisfies $\tlu(\varphi_{x,P}) =
\tlu_{x,P}$ for every $x \in \hatx B$ and $P \in V_\mathbf{k}$.

We consider the action of the Hecke operators on $\MkR$ as defined in
\cite[Sect.~2]{w2} and we normalize the standard Hecke operator $T\pp$ on
$\Mhku$ multiplying it by $\idn p$ as in \cite[Sect.~4]{w2}.
The following result will be used in Sect.~\ref{sect:shimura}.

\begin{prop}\label{prop:hecke_linear}

	The map $\tlu$ satisfies
	\begin{equation}\label{eqn:hecke}
		T\pp\left(\tlu(\varphi)\right)
		=
		\chi_*(\id p) \,
		\tlu\left(T\pp(\varphi)\right)
	\end{equation}
	for every $\id p \nmid 2 \id N \cond l$.
	Furthermore, it maps cuspidal forms to cuspidal forms.

\end{prop}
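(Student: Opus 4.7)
The plan is to generalize the Fourier-coefficient argument of \cite[Sect.~4]{w2} so as to accommodate an arbitrary Hecke character $\chi$, arbitrary $\gamma$, and arbitrary weight $\mbk$. By linearity in $\varphi$, together with the decomposition \eqref{eqn:phi=sum_phix} and the identity $\tlu(\varphi_{x,P})=\tlu_{x,P}$, it suffices to verify \eqref{eqn:hecke} on the generators $\varphi = \varphi_{x,P}$. Using \eqref{eqn:quat_hecke} the right-hand side unfolds as $\tlu(T_\id p \varphi_{x,P}) = \sum_h \tlu_{hx,P}$, where $h$ runs over a set of representatives for $\hatx R \backslash H_\id p$, and the Fourier coefficients of each $\tlu_{hx,P}$ are accessible through \eqref{eqn:cfourier}.

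The central task is then to compute $\lambda(D,\id a; T_\id p \tlu_{x,P})$ by unfolding the normalized half-integral-weight Hecke operator at $\id p$. This expresses it as an explicit linear combination of coefficients $\lambda(D,\id a\id p^{\pm 1}; \tlu_{x,P})$ and $\lambda(D\pip^{\pm 2}, \id a\id p^{\mp 1}; \tlu_{x,P})$ with coefficients involving $\chic{D}(\id p)$ and powers of $\idn p$. Comparing this with $\chi_*(\id p)\sum_h \lambda(D,\id a;\tlu_{hx,P})$, reindexed via $y\mapsto hyh^{-1}$ on the lattice side, the identity reduces to a combinatorial match between weighted counts of lattice points of fixed discriminant at different ideal levels. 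The rescaling rule \eqref{eqn:wsegda} shows that passing from $\id a$ to $\id p\id a$ multiplies the weight function by $\chi_*(\id p)\chic[*]{l}(\id p)$; the extra $\chic[*]{l}(\id p)$ combines with the genus-character factorization $\chic{D}(\id p) = \chic[*]{l}(\id p)\cdot \chic[*]{lD}(\id p)$ implicit in the Hecke coefficients, and together with the $\idn p$ normalization of $T_\id p$ collapses to precisely the single twist $\chi_*(\id p)$ on the left. Since $\id p \nmid 2\id N\cond l$, the local weight functions at bad primes are untouched, so the entire calculation takes place at the good prime $\id p$.

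For cuspidality, when $l\notin \Fc$, $\cond\chi\neq\OO$, or $\mbk\neq\mathbf 0$, Proposition~\ref{prop:modularity}(c) already guarantees that every $\tlu_{x,P}$ is cuspidal, and therefore so is $\tlu(\varphi)$ for arbitrary $\varphi\in\MkR$. In the remaining case $l\in\Fc$, $\cond\chi=\OO$, $\mbk=\mathbf 0$, the theta series in the image are ordinary ternary theta series of weight $\mathbf{3/2}$, and their Eisenstein components are carried by the image of $\mcE_\mbk(R)$; since the cuspidal subspace $\mathcal S_\mbk(R)$ is defined as the orthogonal complement of $\mcE_\mbk(R)$, cuspidality on the quaternionic side is transported to cuspidality of the theta lift.

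The principal obstacle I anticipate lies in the middle paragraph: the bookkeeping required to track all the character values $\chi_*$, $\chic[*]{l}$, $\chic{D}$, and $\chic[*]{lD}$ through the reindexed lattice sums and to verify that, after cancellation with the Hecke normalization factor $\idn p$, the only surviving twist is $\chi_*(\id p)$. Conceptually this is a direct extension of the calculation of \cite[Sect.~4]{w2}, but it must now be performed at each double-coset representative while simultaneously keeping track of both the $l$-twist of the weight function and the arbitrary character $\chi$.
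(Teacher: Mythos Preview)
Your strategy matches the paper's: both reduce to a Fourier-coefficient comparison via \eqref{eqn:cfourier} and Shimura's formula \cite[Prop.~5.5]{shim-hh}, exactly as in \cite[Prop.~4.1]{w2}. However, you have made the character bookkeeping harder than necessary. The factor $\chic[*]{l}(\id p)$ produced by \eqref{eqn:wsegda} never survives to be cancelled against a genus-character term in the half-integral Hecke formula; it is absorbed immediately by the prefactor $\chic{l}(\norm(hx))$ in the definition \eqref{eqn:wx} of $w_{hx}$, since $\norm(h)\,\OO=\id p$ gives $\chic{l}(\norm h)=\chic[*]{l}(\id p)$. Combined with \eqref{eqn:wqtransp} (to handle conjugation by $h_v\in R_v^\times$ at the primes $v\mid\cond l$) this yields the single identity
\[
w_x(y;\id p\id c)=\chi_*(\id p)\,w_{hx}(y;\id c)
\qquad\forall\,h\in H_{\id p},\ y\in\id c^{-1}L_{hx},
\]
after which the argument of \cite[Prop.~4.1]{w2} goes through verbatim and the only new factor is $\chi_*(\id p)$. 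Your proposed detour through $\chic{D}(\id p)=\chic[*]{l}(\id p)\,\chic[*]{lD}(\id p)$ is not needed, and pursuing it would force you to treat the individual terms of the Hecke operator separately rather than uniformly via the weight-function identity above.

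For cuspidality in the residual case $l\in\Fc$, $\cond\chi=\OO$, $\mbk=\mathbf 0$, your assertion that the Eisenstein part of the image is carried by $\mcE_\mbk(R)$ is precisely what must be proved; the paper supplies this by invoking \cite[Prop.~2.5]{waldspu}. Without that input your last sentence is a restatement of the claim rather than an argument.
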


\begin{proof}
	The first claim follows using \eqref{eqn:quat_hecke} and the formulas for the
	action of the Hecke operators on Fourier coefficients given in
	\cite[Prop.~5.5]{shim-hh} and \eqref{eqn:cfourier}, as in
	\cite[Prop.~4.1]{w2}.
	The only modification required in that proof is that now, by
	\eqref{eqn:wqtransp}, \eqref{eqn:wsegda} and~\eqref{eqn:wx},
	the weight functions satisfy
	\[
		w_x(y;\id{pc})
		=
		\chi_*(\id p) \,
		w_{hx}(y;\id c)
		\qquad
		\forall\, h \in H_{\id p},\, y \in \id c^{-1}L_{hx}
	\]
	for every $\id p \nmid \id N$,
	which explains the $\chi_*(\id p)$ factor in \eqref{eqn:hecke}.

	The second claim follows combining Proposition~\ref{prop:modularity} (c)
	and \cite[Prop.~2.5]{waldspu}.
\end{proof}

\newcommand{\ttemp}{f}

For the remainder of this section we denote $f = \tlu(\varphi)$.

\begin{prop}\label{prop:kohnen}
	Let $D \neq 0$ be such that
	$\lambda\left(D,\id a;\ttemp\right) \neq 0$.
	Then $(D,\id a)$ is a discriminant.
    Furthermore, for $v \mid \id N$:
\begin{enumerate}
	\item If $v \nmid D\id a^2 $ then
		$\kro Dv = \kro{l}{\pv} e(R\p)$.
	\item If $v \mid D\id a^2$ then $v\nmid\cond\chi$.
	\item If $v \mid D\id a^2$ and $(D,\id a)$ is fundamental 
		then $\val_v(\id N) = 1$.
\end{enumerate}

\end{prop}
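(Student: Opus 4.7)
The plan is to exploit the Fourier coefficient formula \eqref{eqn:cfourier} applied to $f = \tlu(\varphi)$: if $\lambda(D,\id a;f) \neq 0$ then there exist $x \in \hatx B$ and $y \in \id c^{-1} L_x$ (with $\id c = \id a\id b$) such that $\disc(y) = lD$ and $w_x(y;\id c) \neq 0$. All four assertions will follow by local analysis at the primes $v \mid \id N$. At each place $v$, pick a generator $\xi_v$ of $\id c_v$; then $z_v := \xi_v y_v \in L_{x,v}$ admits a lift $\tilde z_v \in R_{x,v}$ with $\disc(\tilde z_v) = \xi_v^2 lD$, and integrality of $\tilde z_v$ yields $\trace(\tilde z_v)^2 \equiv \xi_v^2 lD \pmod{4\OO[v]}$. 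Since $\cond l$ is prime to $2\id N$, at $v \mid \id N$ the extension $F(\sqrt l)/F$ is unramified at $v$ and $\val_v(l\id b^2)=0$; consequently $\val_v(\disc(\tilde z_v)) = \val_v(D\id a^2)$.

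For~(a), $v \nmid D\id a^2$ forces $\disc(\tilde z_v) \in \OOx[v]$, and Proposition~\ref{prop:disc_eichler}(a) combined with multiplicativity of the Kronecker symbol yields $\kro D v = \kro l v\, e(R_v)$. For~(b), if $v \mid \cond\chi$ then \eqref{eqn:sop_wii} applied to $w_v$ forces $\disc(\tilde z_v) \in \OOx[v]$, contradicting $v \mid D\id a^2$; contrapositively, $v \mid D\id a^2$ implies $v \nmid \cond\chi$.

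For~(c), assume $(D,\id a)$ fundamental and $v \mid D\id a^2$, and suppose $\val_v(\id N) \geq 2$ toward a contradiction. Since $v \nmid \cond l$, $F_v(\sqrt{lD})$ is a twist of $F_v(\sqrt D)$ by the unramified quadratic character (or equals it), so both extensions have the same local discriminant; by fundamentality of $(D,\id a)$ this common discriminant is $(D\id a^2)_v$. The identity $\val_v(\disc(\tilde z_v)) = \val_v(D\id a^2)$ then matches the discriminant of $\OO[v][\tilde z_v]$ with that of the maximal order of $F_v(\sqrt{lD})$, forcing $\OO[v][\tilde z_v]$ to be maximal. But $v \mid \disc(\tilde z_v)$ contradicts Proposition~\ref{prop:disc_eichler}(b), whence $\val_v(\id N) = 1$.

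Finally, to prove that $(D,\id a)$ is itself a discriminant, combine the congruences $\trace(\tilde z_v)^2 \equiv \xi_v^2 lD \pmod{4\OO[v]}$ at every $v$ with the analogous congruences provided by the fundamentality of $(l,\id b)$, namely the existence of $r_v \in \id b_v^{-1}$ with $r_v^2 \equiv l \pmod{4\id b_v^{-2}}$. Dividing yields $(\trace(\tilde z_v)/r_v\xi_v)^2 \equiv D \pmod{4\id a_v^{-2}}$ locally at each $v$, and weak approximation produces a global $s \in \id a^{-1}$ with $s^2 \equiv D \pmod{4\id a^{-2}}$, which is equivalent to $(D,\id a)$ being a discriminant. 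The main technical obstacle, shared with part~(c), lies at primes $v \mid 2$: the mod-$4$ congruence is a genuine constraint there, and the equality of local discriminants used for~(c) relies on the unramifiedness of $F_v(\sqrt l)/F_v$ at such $v$.
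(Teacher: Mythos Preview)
Your arguments for (a), (b), and (c) follow the paper's proof: reduce via \eqref{eqn:cfourier} to a single nonvanishing term, scale $y$ by a local generator of $\id c$ to land in $R_v$ (the paper takes $x=1$ and writes the generator as a product $\xi_v\zeta_v$ with $\xi_v,\zeta_v$ generating $\id a_v,\id b_v$ separately, a cosmetic difference), and then invoke Proposition~\ref{prop:disc_eichler} and \eqref{eqn:sop_wii} exactly as you do.

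For the claim that $(D,\id a)$ is a discriminant the paper takes a slightly different route: it first records that $(lD,\id a\id b)$ is a discriminant (from the existence of $y$) and that $D\id a^2 \subseteq \OO$, and then cites \cite[Prop.~3.6]{w2}. Your direct mod-$4$ division amounts to an inline proof of that cited proposition and is correct at the primes $v \mid 2$. There is, however, a gap at the primes $v \mid \cond l$. These are odd, so the congruence $r_v^2 \equiv l \pmod{4\id b_v^{-2}}$ is vacuous (e.g.\ $r_v = 0$ qualifies) and the quotient $\trace(\tilde z_v)/(r_v\xi_v)$ need not make sense; more to the point, since $\val_v(l\id b^2) = 1$ there, the bare integrality $\disc(\tilde z_v) \in \OO[v]$ only gives $\val_v(D\id a^2) \geq -1$. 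The missing ingredient is the support condition of the type~I local weight function: $w_v(x) = 0$ unless $\piv \mid \Delta(x)$, which forces $\val_v(\disc(\tilde z_v)) \geq 1$ and hence $\val_v(D\id a^2) \geq 0$. The paper uses exactly this to obtain $D\id a^2 \subseteq \OO$ before invoking \cite[Prop.~3.6]{w2}; once you insert it, your argument is complete.
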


\begin{proof}

By \eqref{eqn:theta_map} we can assume that $f = \vartheta\ssl_{x,P}$
and without loss of generality $x=1$.
By \eqref{eqn:cfourier} if $\lambda(D,\id a;f) \neq0$ there exists 
$y \in \mathcal{A}_{\Delta, \id c}(L)$ such that $w(y;\id c) \neq 0$.
The former implies that $(\Delta, \id c)$ is a discriminant and the
latter that $\Delta(y)\id c^2\subseteq\cond l=l\id b^2$.
Since $\Delta(y)=lD$ and $\id c=\id a\id b$
we have that $D\in\id a^{-2}$ and
then \cite[Prop.~3.6]{w2} implies that $(D, \id a)$ is a discriminant.

Let $v \mid \id N$, so that in particular $\kro lv \neq 0$.
Let $\xi\p$ and $\zeta\p$ be local generators for $\id a$ and $\id b$ respectively.
Then $z_v=\xi\p\,\zeta\p\, y\in R\p$ and
\[
	\Delta(z_v) = \left(D\,\xi\p^2\right)\left(l\,\zeta\p^2\right).
\]
In particular $v \nmid D\id a^2$ if and only if $\Delta(z_v) \in \OOx[v]$.
Hence (a) follows from Proposition~\ref{prop:disc_eichler}
and (b) follows from \eqref{eqn:sop_wii}.
When $(D,\id a)$ is fundamental $\OO[v][z_v]$ is a
maximal order.
If $\val_v(\id N) > 1$ then
Proposition~\ref{prop:disc_eichler}
implies $v\nmid D\id{a}^2$, proving (c).
\end{proof}

\begin{rmk}\label{rmk:kohnen}

Assume that $\chi = 1$ and $f$ is cuspidal.

In the case $F = \Q$, let $u = -\sgn(l)$ and let $\ttemp \cdot u$ be the
holomorphic form given by Proposition \ref{prop:hilbert_holomorphic}.
When $\id N = (N)$ is odd and square-free, Proposition \ref{prop:kohnen} (a)
implies that $\ttemp \cdot u \in S_{k+3/2}^{\pm,p}(N,\chi^u)$ for every $p \mid N$,
the subspace of the Kohnen space defined in
\cite[Prop.~4]{kohnen-newforms} with $\pm = \kro{l}{p} e(R_p)$.
For general $N$ we have that $\ttemp \cdot u \in S^{\emptyset,\kappa}$,
the subspace of the Kohnen space considered in \cite{ueda}
corresponding to the function
$\kappa:\set{p \mid N}{p \neq 2,\, p^2 \mid N} \to \{\pm 1\}$
given by $\kappa(p) = \kro{l}{p} e(R_p)$.

For arbitrary $F$, assuming that there exists $u \in \OOx$ with $lu$
totally negative, we consider the holomorphic form $\ttemp \cdot u$ given by
Proposition \ref{prop:hilbert_holomorphic}.
When $\id N$ is odd and square-free, by \eqref{eqn:fu} and
Proposition~\ref{prop:kohnen} (a),
we have that $\ttemp\cdot u$ belongs to the Kohnen space as
defined in \cite{hiraga-ikeda} and \cite{su}.

\end{rmk}

\section{The main formula}\label{sect:formula}

We devote this section to prove Theorem \ref{thm:formula}.
Its proof is similar to that of \cite[Thm.~7.1]{w2}.
Here we introduce the needed tools and results, stressing the modifications
needed in this new setting.

\medskip

As in the Introduction we let $g$ be a normalized cuspidal newform of level
$\id N$ and weight $\mathbf{2+2k}$
and we let $\gamma : \SN \to \{\pm1\}$.
We let $R$ be as in Sect.~\ref{sect:construction},
and let $l$ and $\chi$ satisfying
the hypotheses mentioned in the Introduction, namely
\begin{enumerate}[label=\textbf{H$l$}.,ref={\textbf{H$l$}},
                  labelindent=\parindent,leftmargin=*,
                  topsep=\medskipamount
                  ]
	\item $\kro lv = \gamma(v)\,e(R_v)$ for all $v\in\SN$,
		and $\kro lv \neq 0$ for all $v \mid 2$,
\end{enumerate}
\begin{enumerate}[label=\textbf{H$\chi$}.,ref={\textbf{H$\chi$}},
				   labelindent=\parindent,leftmargin=*,
				   topsep=\medskipamount
				   ]
		   \item For finite $v$ the character $\chi_v$ is unramified if $v
			   \nmid \id N$ or $\gamma(v)^{\val_v(\id N)} = \AL[v]{g}$,
		 and is odd otherwise.
\end{enumerate}
Note that this includes the assumptions made in Sects.~\ref{sect:construction}
and \ref{sect:modularity}.

\begin{rmk}
\label{rmk:gamma_rest}
In \cite{w2} we considered only the case $\chi = 1$, which restricted the
construction to types $\gamma$ satisfying
\begin{equation*}
	\gamma(v)^{\val_v(\id N)} = \AL[v]{g}, \quad v \mid \id N.
\end{equation*}
For instance, for squarefree level, only one type $\gamma$ satisfies this
for a given $g$.
\end{rmk}

Under the hypotheses above $\tlu(\varphi)$ satisfies the following
facts in terms of~$\gamma$.
First, $\tlu(\varphi)$ is holomorphic in the variables $v
\in \mba$ such that $\gamma(v) = 1$ and antiholomorphic at the remaining
variables.
Furthermore, \eqref{eqn:heegnerS} is equivalent to $\AL{g,\gamma}=1$.
Finally, Proposition \ref{prop:kohnen} implies that if $\lambda(D,\id
a;\tlu(\varphi)) \neq 0$ and $(D,\id a)$ is a fundamental discriminant 
then $D$ is of type $\gamma$.

\medskip

Let $c(\mathbf{k})$ and $C(\id N)$ denote the positive rational numbers given by
\[
	c(\mathbf{k}) =
	\prod_{v\in\mathbf{a}} \frac{r_{k_v}}{s_{k_v}},
	\qquad
	C(\id N) =
	\prod_{v \mid \id N} (\mathcal N(v) + 1) \,\mathcal N(v)^{\val\p(\id N) - 1}\,,
\]
where for $k \in \Z_{\geq 0}$ we denote
\begin{equation*}
    r_k = \frac{2^{2k+1}(k!)^2}{(2k)!}\,,
	\qquad
	s_k = \frac{1}{\Gamma(k+1/2)}\,
		\sum_{q=0}^{\left\lfloor \frac{k}{2} \right\rfloor} 
		\:\frac{\Gamma(k+1/2-q)}{q!\,(k-2q)!\,2^{2q}}\,.
\end{equation*}
For later use we define
\begin{equation} \label{eqn:ckn}
	\ckN
	=
	\frac{\dF^{\;1/2}}{\hF}\,
	\frac{c(\mathbf{k})\:C(\id N)}{2^{2\omegaN}}
	>0
	\,,
\end{equation}
where $\omega(\id N)$ is the number of prime divisors of $\id N$.

\medskip

Let $D \in F^\times$ of type $\gamma$.
Denote $\Delta = lD$ and let $K = F(\sqrt\Delta)$.
Denote by~$\Xi$ the character of $C_K$ corresponding to the quadratic
extension $F(\sqrt{l},\sqrt{D}) / K$ and
by~$P_\Delta$ the polynomial defined by \cite[(4.3)]{w2}.
Consider the quaternionic form
\begin{equation*}
    \psi^l_D
	=
	\tfrac1{\tK}
	\sum_{a\in\cl(K)} \Xi(a) \varphi_{a,P_\Delta}\qquad
	\in \mathcal{M}_{\mathbf{k}}(R)\,.
\end{equation*}

We now recall \cite[Thm.~5.1]{w2}, which relates
the central value $L_{l,D}(1/2,g)$ to the height
of the $g$-isotypical component of~$\psilD$.
It is based on \cite[Thm.~1.3.2]{zhang-gl2} (for $\mbk = \mathbf0$) and on
\cite[Thm.~1.2]{Xue-Rankin} (for general $\mbk$, which requires $\id N \neq \OO$).

\begin{thm}[Theorem 5.1 of \cite{w2}]
	\label{thm:xue}
Let $D \in F^\times$ of type $\gamma$.
Let $\Tg$ be a polynomial in the Hecke operators prime to $\id N$ giving the 
$g$-isotypical projection.
If $\mbk \neq \mathbf0$, assume that $\id N\neq\OO$.
Assume that $\cond{\Delta}$ is prime to $2 \id N$ and
that $\cond l$ and $\cond D$ are prime to each other.
Then
\begin{equation*}
 L_{l,D}(1/2,g) =
 \ll{g,g}
 \:
 \frac{\dF^{\;1/2}}{\hF}
 \:
 \frac{c(\mathbf{k}) \: C(\id N)}
 {\mathcal N(\cond \Delta)^{1/2}(-\Delta)^\mathbf{k}}
 \:
 \ll{\Tg\psi^l_D, \psi^l_D}\,.
\end{equation*}

\end{thm}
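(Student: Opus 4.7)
The plan is to reduce the stated identity to the Gross--Zagier--Zhang--Xue central value formula, namely \cite[Thm.~1.3.2]{zhang-gl2} when $\mbk=\mathbf{0}$ and \cite[Thm.~1.2]{Xue-Rankin} when $\mbk\neq\mathbf{0}$. The starting observation is that $L_{l,D}(s,g)=L(s,g\otimes\chic{l})\,L(s,g\otimes\chic{D})$ factorizes as the Rankin--Selberg convolution of $g$ by the theta series attached to the genus character $\Xi$ of $K=F(\sqrt\Delta)$ associated to the biquadratic extension $F(\sqrt{l},\sqrt{D})/F$. Thus the central value is exactly the object to which the Waldspurger/Gross--Zagier framework applies for the pair $(g,\Xi)$ on $\GL_2(F)\times K^\times$.

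The second step is to verify that the hypotheses \ref{hyp:Hl} and the type $\gamma$ condition on $D$ force the local root numbers to match the sign distribution of the quaternion algebra $B$ underlying $R$: at each $v\in\SN$ the local invariant of $B$ agrees with the product of local epsilon factors attached to $(g_v,\Xi_v)$. This is precisely the content of requiring $\AL{g,\gamma}=1$, together with the fact that $\kro{l}{v}=\gamma(v)\,e(R_v)$ for $v\mid\id N$. Consequently $K$ embeds optimally in $R$ with respect to $\id N$, the class group $\cl(K)$ parametrizes a system of Heegner points in $\hatx B/\hatx R$, and the form
\[
\psi^l_D=\tfrac{1}{\tK}\sum_{a\in\cl(K)}\Xi(a)\,\varphi_{a,P_\Delta}
\]
is exactly the $\Xi$-isotypical CM cycle, with $P_\Delta$ supplying the harmonic polynomial needed to interpolate the archimedean weight $\mbk$ data (see \cite[(4.3)]{w2}).

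With this identification in place, the Zhang/Xue formula expresses $L_{l,D}(1/2,g)$ as $\ll{g,g}$ times the square norm of the $g$-isotypical projection of the Heegner class, up to an explicit product of local and global normalization factors. Transferring to the quaternionic side via Jacquet--Langlands turns this into $\ll{\Tg\psilD,\psilD}$ via the height pairing on $\MkR$, with $\Tg$ playing the role of the projector onto the $g$-isotypical component. The coprimality assumptions on $\cond l,\cond D$ and on $\cond\Delta$ with $2\id N$ put the local test vector computation squarely in the unramified Gross--Prasad range outside $\id N$, so the only local correction factors appearing are those packaged into $C(\id N)$, namely Euler-type contributions at primes dividing $\id N$ coming from local Waldspurger/Whittaker integrals on the optimal order $R_v$ described in \eqref{eqn:e=1} and \eqref{eqn:e=-1}.

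The main obstacle, and where most of the work goes, will be the careful bookkeeping of the constants $\dF^{\;1/2}/\hF$, $c(\mbk)$, $C(\id N)$, and $\mathcal N(\cond\Delta)^{1/2}(-\Delta)^\mbk$. The first comes from the Tamagawa measure normalization on $C_F$ and on $F_\mbA$; the archimedean factor $c(\mbk)=\prod_v r_{k_v}/s_{k_v}$ arises from evaluating matrix coefficients of the holomorphic discrete series of weight $\mathbf{2+2k}$ against the harmonic polynomial $P_\Delta$; the denominator $\mathcal N(\cond\Delta)^{1/2}(-\Delta)^\mbk$ reflects the normalization of the theta series of $\Xi$ and the archimedean scaling of $P_\Delta$; and $C(\id N)$ absorbs the local inner product computations at $v\mid\id N$, which must be done separately in the two cases $e(R_v)=\pm1$ and for each value of $\val_v(\id N)$. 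The exclusion of $\id N=\OO$ when $\mbk\neq\mathbf0$ is forced by the hypothesis of \cite[Thm.~1.2]{Xue-Rankin}, since $B$ must be ramified at some finite place to carry a nontrivial weight.
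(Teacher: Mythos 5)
Your outline matches the paper's treatment of this statement: the paper gives no independent proof here but imports the result verbatim from \cite[Thm.~5.1]{w2} (with the spurious $m_K^2$ factor removed, cf.\ Remark~\ref{rmk:capitulation}), and that reference derives it from \cite[Thm.~1.3.2]{zhang-gl2} and \cite[Thm.~1.2]{Xue-Rankin} by precisely the Rankin--Selberg factorization, local root-number matching, and Jacquet--Langlands transfer you describe, with the constant bookkeeping you defer being exactly the content of the cited source. The only quibble is your justification for excluding $\id N=\OO$ when $\mbk\neq\mathbf0$: a totally definite $B$ over a field of even degree can be unramified at every finite place and still carry forms of nontrivial weight, so the restriction comes from the hypotheses of \cite[Thm.~1.2]{Xue-Rankin} itself rather than from any ramification requirement on $B$.
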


\begin{rmk}\label{rmk:capitulation}
	The right hand side of the formula in \cite[Thm.~5.1]{w2}
	contains a wrong factor $m_K^2$.
	This is caused
	by a mistake in the proof of
	\cite[Prop.~3.5]{waldspu}, where we claimed that 
	$\vert K^\times/F^\times \cap \hatx R/\hatOOx\vert = \tK$.
	Actually, since $\OO[K] \subseteq R$, we have that
	\[
		\vert K^\times/F^\times \cap \hatx R/\hatOOx\vert
		= \vert K^\times/F^\times \cap \hatOOx[K]/\hatOOx\vert
		= m_K \, \tK.
	\]
	The last equality is explained by \cite[p.~548]{rvyang} where it is shown
	that there exists an exact sequence
	\[
		1
		\to
		\OOx
		\to
		\OOx[K]
		\to
		K^\times/F^\times \cap \hatOOx[K]/\hatOOx
		\to
		\hatx F / \OOx F^\times
		\to
		\hatx K / \OOx[K] K^\times
		.
	\]
\end{rmk}

\bigskip

In order to relate central values with Fourier coefficients
we consider the set $X_{\Delta,\id c}$ of special points
associated to the discriminant $(\Delta,\id c)$ (see \cite[Sect.~4]{w2}) and we
let
\begin{equation*}
	\etalD
	=
	\tfrac1{w(\omega;\id c) \, t_K}
	\sum_{x \in X_{\Delta, \id{c}}}
		w_x(\omega;\id c) \,
		\varphi_{x,P_{\Delta}}
	\quad \in \mathcal{M}_{\mathbf{k}}(R)\,,
\end{equation*}
where $\omega \in K$ is such that $\OO \oplus \omega \id c$ is an order in $K$
and $\Delta(\omega) = \Delta$.

\begin{prop}\label{prop:coef_serie_theta}
Let $\varphi\in \mathcal M_\mathbf{k}(R)$.
Let $D \in F^\times$ be such that $-lD\in F^+$ and let $\id a \in \IF$. 
Then
\begin{equation*}
 \lambda\left(D,\id a;\tlu(\varphi)\right) =
 \frac1{\idn a}\,\ll{\varphi, \etalD}\,.
\end{equation*}
\end{prop}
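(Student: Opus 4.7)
My plan is to unravel both sides of the claimed identity and match them via a bijection between the pairs indexing the Fourier expansion of $\tlu(\varphi)$ and the special points indexing $\etalD$.

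First I would expand the left-hand side. By the definition \eqref{eqn:theta_map} of the theta map together with the Fourier coefficient formula \eqref{eqn:cfourier} from Proposition~\ref{prop:modularity}(b), I obtain
\begin{equation*}
    \lambda\bigl(D,\id a;\tlu(\varphi)\bigr)
    = \frac{1}{\idn a}\sum_{x\in\cl(R)}\frac{1}{t_x}
    \sum_{y\in\mathcal A_{\Delta,\id c}(L_x)} w_x(y;\id c)\,\ll{\varphi(x)(y),1},
\end{equation*}
where $\Delta = lD$ and $\id c = \id{ab}$, and I have written $\varphi(x)(y)$ to mean evaluating the polynomial $\varphi(x)\in V_\mathbf{k}$ at $y\in W$.

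Next I would simplify the right-hand side. Unfolding the definition of $\etalD$ and using the bilinearity of the height pairing, together with the elementary identity $\ll{\varphi,\varphi_{x,P}} = \ll{\varphi(x),P}$ (which comes from the fact that $\varphi_{x,P}$ is supported on $\hatx R x B^\times$, combined with $\varphi(x)\in V_\mathbf{k}^{\Gamma_{\!x}}$ so that $\sum_{\gamma\in\Gamma_{\!x}}P\cdot\gamma$ pairs with $\varphi(x)$ to give $t_x\ll{\varphi(x),P}$ after cancelling against the $1/t_x$ in the height pairing), yields
\begin{equation*}
    \ll{\varphi,\etalD}
    = \frac{1}{w(\omega;\id c)\,t_K}
    \sum_{x\in X_{\Delta,\id c}} w_x(\omega;\id c)\,\ll{\varphi(x),P_\Delta}.
\end{equation*}

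The crux of the proof, and its main technical obstacle, is to exhibit a bijection between $\bigsqcup_{x\in\cl(R)}\Gamma_{\!x}\backslash \mathcal A_{\Delta,\id c}(L_x)$ and $X_{\Delta,\id c}$ (which is how special points are parametrized in \cite[Sect.~4]{w2}). Concretely, each $y\in\mathcal A_{\Delta,\id c}(L_x)$ determines an optimal embedding $\OO\oplus\id c\,\omega\hookrightarrow R_x$ sending $\omega\mapsto y$; translating this into a double coset in $\hatx R\backslash\hatx B/\widehat K^\times$ produces a special point $x'\in X_{\Delta,\id c}$ with $x'\omega(x')^{-1}\in\widehat L_x$ corresponding to $y$. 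Under this bijection the transformation property \eqref{eqn:transf_wx} converts $w_{x'}(\omega;\id c)$ into $w_x(y;\id c)$ (up to the constant $w(\omega;\id c)$ which is absorbed into the normalization of $\etalD$), and the defining property of $P_\Delta$ (see \cite[(4.3)]{w2}) ensures that $P_\Delta$ evaluated at $\omega$ and then suitably conjugated recovers $\varphi(x)(y)$. The combinatorial factors $t_x$ and $t_K$ account respectively for the $\Gamma_{\!x}$-action on $y$'s and for the $\OOx[K]/\OOx$-action on embeddings.

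The main obstacle is therefore bookkeeping: matching the three normalizations $1/t_x$, $1/(w(\omega;\id c)\,t_K)$, and $1/\idn a$ under the bijection, while also verifying that the weight-function factors and polynomial evaluations transport correctly through the type~I and type~II local pieces (using Propositions~\ref{prop:wq} and~\ref{prop:wp}). This is a direct generalization of the corresponding computation in \cite{w2}; the new content is only that the weight functions now incorporate the character $\chi$ and arbitrary $l$, which Propositions~\ref{prop:wq} and~\ref{prop:wp} show behave as expected under the relevant conjugations.
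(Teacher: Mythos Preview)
Your approach is correct and is essentially a reconstruction of the argument from \cite[Prop.~4.2]{w2}; the paper's own proof is the single sentence ``This is \cite[Prop.~4.2]{w2}, and holds with no changes,'' so you have supplied the details that the paper merely cites. One small notational slip: the quantity $\ll{\varphi(x)(y),1}$ in your expansion of the left-hand side should simply be $\varphi(x)(y)$, since $\varphi(x)(y)\in\C$ is already a scalar---the inner product on $V_\mathbf{k}$ enters on the right-hand side through $\ll{\varphi(x),P_\Delta}$, and the defining property of $P_\Delta$ is precisely that this pairing recovers the evaluation at the point corresponding under your bijection.
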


\begin{proof}
This is \cite[Prop.~4.2]{w2}, and holds with no changes.
\end{proof}

From here on we assume that $(D, \id a)$ is fundamental.
Then $\etalD$ is related to $\psilD$ by the following result.

\begin{prop}\label{prop:eta=proy_psi}
 
Let $\delta\ssl$ be as in \eqref{eqn:deltal}.
Assume that $\cond{\Delta}$ is prime to $\id N$. Then
\begin{equation*}
	\etalD
	=
	\sum_{z \in \Bil(R)} \delta\ssl(z) \, \psilD \cdot z\,.
\end{equation*}
In particular,
$\etalD \in \mathcal{M}_\mathbf{k}(R,\mathbbm{1})^{\delta\ssl}$.

\end{prop}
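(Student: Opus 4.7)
The plan is to parametrize the set of special points $X_{\Delta,\id c}$ by the action of $\cl(K)\times\Bil(R)$ on a fixed base point $x_0$, and to compute the weight function $w_x(\omega;\id c)$ along this parametrization. From the analysis of CM points in \cite[Sect.~4]{w2} (and \cite[Sect.~3]{waldspu}), the action is simply transitive modulo the stabilizer of order $\tK$, which accounts for the $1/\tK$ prefactor in the definition of $\etalD$. Writing schematically $x = a\cdot x_0 \cdot z$ with $a\in\hatx K$ acting by left multiplication and $z\in\Bil(R)$ acting by right multiplication, the key step is to establish the identity
\[
	w_x(\omega;\id c) = \Xi(a)\,\delta\ssl(z)\,w_{x_0}(\omega;\id c).
\]

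The $\delta\ssl(z)$ factor comes directly from \eqref{eqn:transf_wx}. For the $\Xi(a)$ factor I would use that $\omega\in K$ commutes with $a\in\hatx K$, so the conjugate $x\omega x^{-1}$ is unchanged modulo the $a$-action; the only contribution of $a$ is then through the prefactor $\chic{l}(\norm x)$ in the definition \eqref{eqn:wx} of $w_x$, which picks up $\chic[*]{l}(\norm a)$. On classes of $\cl(K)$ this equals $\Xi(a)$, the latter being the genus character of the biquadratic extension $F(\sqrt l,\sqrt D)/K$ whose restriction to $\cl(K)$ is $\chic[*]{l}\circ\norm_{K/F}$. The hypothesis $\cond\Delta$ prime to $\id N$ is used here to ensure that the type II local weight functions at $\vii$ are unaffected by the $\hatx K$-action on $\omega$ modulo $L_v^0$, so that only the $\chic[*]{l}\circ\norm$ piece survives.

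Substituting the identity above into the definition of $\etalD$ and using the $\Bil(R)$-action on $\mathcal{M}_\mathbf{k}(R,\bbu)$ to relate $\varphi_{x,P_\Delta}$ to $\varphi_{ax_0,P_\Delta}\cdot z$, the double sum decouples as
\[
	\etalD
	=\frac{1}{\tK}\sum_{a\in\cl(K)}\sum_{z\in\Bil(R)} \Xi(a)\,\delta\ssl(z)\,\varphi_{ax_0,P_\Delta}\cdot z
	=\sum_{z\in\Bil(R)} \delta\ssl(z)\,\psilD\cdot z,
\]
recognizing the inner sum over $a$ as $\tK\,\psilD$. For the ``in particular'' assertion, $\psilD$ lies in $\mathcal{M}_\mathbf{k}(R,\bbu)$ because $\Xi$ is trivial on $\hatx F/F^\times$, and the right-hand side is manifestly a $\delta\ssl$-eigenvector for $\Bil(R)$ since $\delta\ssl$ is a character of an abelian group.

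The main obstacle is the key identity displayed above, and within it the identification $\chic[*]{l}(\norm a)=\Xi(a)$ together with the delicate analysis at the type II places $\vii$. The former is a standard computation with genus characters of biquadratic extensions; the latter requires the ramification hypothesis $\cond\Delta\perp\id N$ to guarantee that the reference vector $z_v$ defining $w_v$ can be chosen so that $w_v(x_0\omega x_0^{-1})$ is unaffected by the $K_v^\times$-action on $\omega$ modulo $L_v^0$, ruling out stray characters from the type II factors.
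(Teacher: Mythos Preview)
Your proposal is correct and follows essentially the same route as the paper: the proof here simply cites \cite[Prop.~6.3]{w2} and notes that the key transformation law needed is precisely \eqref{eqn:transf_wx}, which is exactly the identity you isolate for the $\delta\ssl(z)$ factor, while the $\Xi(a)$ factor and the $\cl(K)\times\Bil(R)$ parametrization of $X_{\Delta,\id c}$ are the content of the referenced argument. Your identification of where the hypothesis $\cond\Delta$ prime to $\id N$ enters (to control the type~II places) is also in line with that proof.
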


\begin{proof}
This follows using the same proof as that of \cite[Prop.~6.3]{w2}.
The key fact in that proof is given here by~\eqref{eqn:transf_wx}.
\end{proof}

Let $\pi = \otimes_v \pi_v$ be the irreducible automorphic representation of
$\PGL_2(\FA)$ corresponding to $g$.
For every $\pv \in \mbf$ where $B$ is ramified
we have that $\val\p(\id N)$ is odd,
hence $\pi_v$ does not belong to the principal series.
It follows that there exists an irreducible automorphic representation
$\pi_B$ of $\hatx B$
corresponding to $\pi$ under the Jacquet-Langlands map.
In \cite[Prop.~8.6]{gross-local} it is shown that
$\hatx R$ fixes a unique line in the representation space of $\pi_B$.
This line gives an explicit quaternionic modular form
$\varphi_g \in \mathcal{S}_\mathbf{k}(R,\bbu)$,
which is well defined up to a constant.

\begin{lemma}\label{lem:sect5}
Let $\varphi_g$ be as above.
Then
$\varphi_g\in \mathcal{S}_\mathbf{k}(R,\mathbbm{1})^{\delta\ssl}$.
\end{lemma}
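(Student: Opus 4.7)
The plan is to observe that $\varphi_g$ spans the unique $\hatx R \hatx F$-invariant line in $\pi_B$ (by \cite[Prop.~8.6]{gross-local}), so the action of $\Bil(R) = \hatx R \backslash N(\hatR) / \hatx F$ on this line factors through a character $\delta_g$. The lemma reduces to showing $\delta_g = \delta\ssl$, and since $\Bil(R) = \prod_{v \mid \id N} \Bil(R_v)$ with each factor generated by the element $z_v$ from \eqref{eqn:bilgen}, it suffices to verify the equality at each generator.

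For the first half I would compute $\delta\ssl(z_v)$ directly. Taking the reduced norm in \eqref{eqn:bilgen} shows that in both the $e(R_v) = 1$ and $e(R_v) = -1$ cases we have $(\norm z_v)\OO = \id p_v^{\val_v(\id N)}$; hence by \ref{hyp:Hl},
\[
	\chic[*]{l}\bigl((\norm z_v)\OO\bigr) = \kro{l}{v}^{\val_v(\id N)} = \bigl(\gamma(v)\, e(R_v)\bigr)^{\val_v(\id N)}.
\]
By \ref{hyp:Hchi}, the place $v$ divides $\cond{\chi}$, and thus contributes the factor $\iota_v(z_v) = -1$ to $\delta\ssl(z_v)$, exactly when $\gamma(v)^{\val_v(\id N)} \neq \AL[v]{g}$. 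Splitting into the two cases and using $\gamma(v)^{\val_v(\id N)} = \pm\AL[v]{g}$ yields, in either case,
\[
	\delta\ssl(z_v) = \AL[v]{g}\, e(R_v)^{\val_v(\id N)}.
\]

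For the second half I would identify $\delta_g(z_v)$ with the same quantity through local representation theory. When $e(R_v) = 1$, the order $R_v$ is Eichler and $z_v$ coincides, up to a central factor, with the classical Atkin--Lehner matrix at $v$, whose action on the newvector of $\pi_v$ has eigenvalue $\AL[v]{g}$; since $e(R_v)^{\val_v(\id N)} = 1$, this matches. When $e(R_v) = -1$, a case-by-case analysis based on the description of $\pi_{B,v}$ (principal series or special when $B_v$ is split, i.e.~$t = 0$; finite-dimensional when $B_v$ is ramified, i.e.~$t = 1$) gives that $z_v$ acts on the $R_v^\times F_v^\times$-fixed line by $(-1)^{\val_v(\id N)}\AL[v]{g} = e(R_v)^{\val_v(\id N)}\AL[v]{g}$, as required.

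The main obstacle is the last identification in the $e(R_v) = -1$ case, and particularly in the subcase where $B_v$ is ramified: there $\pi_{B,v}$ is a finite-dimensional representation of $B_v^\times$, and one needs to unwind the local Jacquet--Langlands correspondence to compare the action of $z_v$—whose reduced norm has odd valuation $\val_v(\id N) = 2r+1$—with the Atkin--Lehner eigenvalue of $\pi_v$. This compatibility is standard and consistent with the analogous computations carried out in \cite{waldspu,w2} in the more restrictive settings considered there; extending it here amounts to careful bookkeeping of the sign coming from $e(R_v)^{\val_v(\id N)}$.
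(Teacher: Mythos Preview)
Your approach is essentially the paper's: reduce to checking $\delta_g(z_v) = \delta\ssl(z_v)$ at each generator $z_v$ of $\Bil(R_v)$. Your computation of $\delta\ssl(z_v) = \AL[v]{g}\,e(R_v)^{\val_v(\id N)}$ is correct and in fact more explicit than the paper's intermediate expression $\chi_v(-1)\,\tkro lv^{\val_v(\id N)}$ (the two agree once one unwinds \ref{hyp:Hl} and \ref{hyp:Hchi}).

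The difference lies in how the eigenvalue $\delta_g(z_v)$ is obtained. You split into $e(R_v)=1$ (Atkin--Lehner, clean) and $e(R_v)=-1$ (a case analysis you flag as the main obstacle and leave only sketched). The paper avoids this bifurcation entirely by citing \cite[Thm.~2.2.1]{roberts-thesis}, which states uniformly that $z_v$ acts on $\varphi_g$ by $-\AL[v]{g}$ when $B$ is ramified at $v$ and by $\AL[v]{g}$ when $B$ is split at $v$. Since $B$ ramifies at a finite $v\mid\id N$ exactly when $e(R_v)=-1$ and $\val_v(\id N)$ is odd, i.e., exactly when $e(R_v)^{\val_v(\id N)}=-1$, this yields $\delta_g(z_v)=\AL[v]{g}\,e(R_v)^{\val_v(\id N)}$ in all cases at once. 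This single reference replaces your entire second half and dissolves the obstacle you identify in the $e(R_v)=-1$, $B_v$ ramified subcase.
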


\begin{proof}
By \ref{hyp:Hl} and \ref{hyp:Hchi} the ramification of $B$ is given by the set
\begin{equation}\label{eqn:Sigma}
	\Sigma
	=
	\mba \cup
	\set{\pv \mid \id N}
	{\chi_v(-1) \tkro lv^{\!\val\p(\id N)} = -\varepsilon_g(v)}.
\end{equation}

Let $\pv \mid \id N$, and let $z\p \in N(R\p)$
be the generator for $\Bil(R\p)$ given in \eqref{eqn:bilgen}.
Since $\norm(z\p) = - \pip^{\val\p(\id N)}$ and $\pv \nmid \cond l$,
\[
	\delta\ssl(z\p) =
		\chi_v(-1) \tkro lv^{\!\val\p(\id N)}.
\]
Furthermore, by \cite[Thm.~2.2.1]{roberts-thesis} $z\p$ acts on $\varphi_g$ by
\[
	\varphi_g \cdot z\p =
	\begin{cases}
        -\AL[\pv]{g} \varphi_g & \text{if $\pv \in \Sigma$,} \\
    \AL[\pv]{g} \varphi_g & \text{if $\pv \notin \Sigma$.}
	\end{cases}
\]
These facts together with \eqref{eqn:Sigma}
complete the proof.
\end{proof}

We are now ready to prove Theorems \ref{thm:main} and~\ref{thm:formula}.
Both proofs require the following result of Waldspurger,
which we state for convenience of the reader.

\begin{thm}[{\cite[Thm. 4]{waldspu_forum}}]
	\label{thm:wforum}
Let $h$ be a cuspidal newform such that the sign of the functional equation of
$L(s,h)$ equals $1$.
Let $\Sigma$ be a finite set of places of $F$,
and for every $v \in \Sigma$ let $\delta_v > 0$.
Then there exists $\xi \in F^\times$ such that
\begin{enumerate}
	\item $\abs{\xi-1}_v < \delta_v$ for every $v \in \Sigma$.
	\item $L\left(h\otimes\chic{\xi},1/2\right) \neq 0$.
\end{enumerate}
\end{thm}

\begin{proof}[Proof of Theorem~\ref{thm:formula}]
We first assume that $\cond D$ is prime to $2 \cond l \id N$.
Combining Lemma \ref{lem:sect5} with Proposition \ref{prop:eta=proy_psi} we have
\begin{equation}\label{eqn:phietapsi}
   \ll{\varphi_g,\etalD}
   = 2^{\omegaN}
   \ll{\varphi_g,\psilD}
   \,.
\end{equation}
Since $\Tg\psilD$ is the $\varphi_g$-isotypical projection of $\psilD$ we have that
$\Tg\psilD = 
\frac{\ll{\psilD,\varphi_g}}{\ll{\varphi_g,\varphi_g}} \, \varphi_g$.
Then using Proposition \ref{prop:coef_serie_theta} and \eqref{eqn:phietapsi} we
get
\[
	\ll{\Tg\psilD, \psilD} = 
    \frac{\abs{ \ll{\psilD,\varphi_g} }^2}
         {\ll{\varphi_g,\varphi_g}}
    =
    \frac{\abs{ \ll{\etalD,\varphi_g}}^2}
         {2^{2\omegaN} \,
          \ll{\varphi_g,\varphi_g}}
    =
    \frac{\idn a^2}{2^{2\omegaN}} \,
	\frac{\abs{\lambda\left(D,\id a; \vartheta\ssl(\varphi_g)\right)}^2}
         {\ll{\varphi_g,\varphi_g}}\,.
\]
The result follows, under the assumption on $\cond D$, from Theorem \ref{thm:xue}.
\par\medskip
Now we remove the assumption on $\cond D$.
Given $D\in F^\times$ of type $\gamma$ pick $D' \in F^\times$ of type
$\gamma$ such that $\cond{D'}$ is prime to $2\cond{l}\id N$ and such that
\[
	L\left(g\otimes\chic{D'},1/2\right) \neq 0,
\]
which is possible by Theorem~\ref{thm:wforum}.
We have already proven the formula for $D'$;
hence Proposition~\ref{prop:bmw} below implies
it also holds for $D$.
\end{proof}

\begin{rmk}
Theorem~\ref{thm:formula} generalizes \cite[Thm.~7.1]{w2} to arbitrary $\gamma$.
We must point out that there is a mistake in the formula for
$c_\Delta$ given there.
The correct formula is $c_\Delta=\idn{a/b}$
(see also Remark~\ref{rmk:capitulation}).
\end{rmk}

\begin{proof}[Proof of Theorem~\ref{thm:main}]
Since $\varepsilon_{g,\gamma} = 1$,
assumption \ref{hyp:Hl} implies that the sign of the
functional equation for $L\left(s,g\otimes\chi^l\right)$ equals $1$.
Hence by Theorem~\ref{thm:wforum}
there exists $l$ satisfying that assumption and such that
$L\left(1/2,g\otimes\chi^l\right) \neq 0$.
Choose $\chi$ satisfying \ref{hyp:Hchi} of the smallest possible
conductor, i.e.,
\(
	\cond{\chi} = 
	\prod_{\id p \in S} \id p^{e_v+1}
\),
where $S = \set{v \mid \id N}{\gamma(v)^{\val_v(\id N)} \neq\varepsilon_g(v)}$
and $e_v = \val_v(2)$.
With these choices Theorem~\ref{thm:main} follows
from Theorem~\ref{thm:formula},
taking $f\ssg = \vartheta\ssl(\varphi_g)\neq 0$ and
\begin{equation}
	\label{eqn:c_gamma}
	c_{g,\gamma} =
	\frac\ckN{L\left(1/2,g\otimes\chi^l\right)
	N(\cond l)^{1/2}\abs{l}^{\mathbf{k}}}
	\,
	\frac{\ll{\tlu(\varphi_g),\tlu(\varphi_g)}}{\ll{\varphi_g,\varphi_g}}
	> 0
	.
\end{equation}
The claim about the level follows using
Proposition~\ref{prop:modularity}~(a).
Finally, the cuspidality of $f\sg$ follows from
Proposition~\ref{prop:hecke_linear}.
\end{proof}

\section{The Shimura correspondence}\label{sect:shimura}

For the rest of the article we let $\psi$ be the character on $\FA$
given by $\psi(\xi) = \exp_{\mathbf A}(\xi/2)$.
For $D \in F$ we denote $\psi^D(x) = \psi(Dx)$, and we use a similar notation
locally.
Denote by $S_\psi$ the Shimura correspondence, relative to $\psi$, between near
equivalence classes of cuspidal irreducible automorphic representations of
$\widetilde{\SL_2}(\FA)$ and cuspidal irreducible automorphic representations of
$\PGL_2(\FA)$.
Furthermore, denote by $\Theta$ the theta correspondence between these
representations, as well as between their local counterparts.
Many of the notions and results we need about these correspondences are
summarized in \cite[Sect.~3]{baruch-mao}.

Let $f = \vartheta\ssl(\varphi_g)$ be the modular form of
half-integral weight appearing in Theorem~
\ref{thm:formula}, which we assume to be nonzero.
Let $\tphi = \otimes_v \tphi_v$ denote the cuspidal automorphic form
corresponding to $f$
(which we denoted by $f_\mbA$ in Sect.~\ref{sect:hilbert_half_integral}),
and denote by $\tpi = \otimes_v \tpi_v$ the cuspidal
automorphic representation of $\widetilde{\SL_2}(\FA)$ generated by $\tphi$.
Denote by $V_\tpi$ and by $V_{\tpi_v}$ the spaces of $\tpi$ and $\tpi_v$,
respectively.
As in Sect.~\ref{sect:formula} we let $\pi = \otimes_v \pi_v$ be the
irreducible automorphic representation of $\PGL_2(\FA)$ corresponding
to $g$.

\begin{prop}\label{prop:shimura_map}
	The representation $\tpi$ is irreducible and $S_\psi(\tpi) = \pi$.
\end{prop}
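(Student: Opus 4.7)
The plan is to extract Hecke eigenvalue data from $f$, invoke strong multiplicity one to identify $S_\psi(\tpi) = \pi$, and handle irreducibility of $\tpi$ by a local analysis at each place.

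First I would verify that $\tphi$ is a Hecke eigenform at almost all places. Since $\varphi_g$ is the Jacquet--Langlands transfer of $g$, it is a simultaneous eigenvector for the Hecke operators $T_{\id p}$ with $\id p \nmid \id N$, sharing the eigenvalues $\lambda_g(\id p)$ of $g$. Applying the Hecke-equivariance in Proposition~\ref{prop:hecke_linear}, I would obtain $T_{\id p}(f) = \chi_*(\id p)\,\lambda_g(\id p)\,f$ for every $\id p \nmid 2\id N \cond l$. Consequently, for any irreducible constituent $\tpi' \subseteq \tpi$, the metaplectic Satake parameters of $\tpi'_v$ at these unramified places correspond under $S_\psi$ to those of $\pi_v$, since $\chi_v$ is unramified outside $\id N$ by \ref{hyp:Hchi} and the $\chi_*$-twist is absorbed into the normalization of the Shimura correspondence at unramified places.

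Next, strong multiplicity one for $\PGL_2(\FA)$ applied to this eigenvalue match at a set of density one of places forces $S_\psi(\tpi') = \pi$ for every irreducible constituent $\tpi' \subseteq \tpi$. In particular, all constituents of $\tpi$ lie above the single representation $\pi$ under $S_\psi$. To rule out a proper decomposition, I would argue locally that each $\tpi_v$ is itself irreducible. At a non-archimedean place $v$, the Waldspurger packet over $\pi_v$ has at most two irreducible constituents, distinguished by the $F_v^\times / (F_v^\times)^2$ cosets that support their $\psi_v^D$-Whittaker functionals. Proposition~\ref{prop:kohnen} shows that the Fourier coefficients of $f$ are supported on discriminants of a single type $\gamma$, and this selects a unique local packet constituent. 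At an archimedean place $v$, the weight $\mathbf{3/2+k}$ together with the signature $\ah_v$ determines a unique discrete series component. Assembling these local irreducibilities yields global irreducibility of $\tpi$, and together with the Shimura match above gives $S_\psi(\tpi) = \pi$.

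The main obstacle is the rigorous local identification at the ramified places $v \mid 2\id N \cond l$, where the specific vector $\tphi_v$ produced by the theta construction must be matched with a prescribed member of the Waldspurger packet rather than a nontrivial combination of its two constituents. At unramified finite places the spherical vector is unique, and at the archimedean places the holomorphy type determines the component; but at the ramified finite places the comparison relies on an explicit description of the local new vectors in the metaplectic representation, which is precisely the content of the next two sections, culminating in Proposition~\ref{prop:bm}.
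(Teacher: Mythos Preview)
Your argument for $S_\psi(\tpi')=\pi$ via the Hecke eigenvalues is essentially the paper's, but your irreducibility argument has a real gap and is also more laborious than necessary.

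The gap is circularity. You defer the local identification of $\tphi_v$ at the ramified places $v\mid 2\id N\cond l$ to ``the next two sections, culminating in Proposition~\ref{prop:bm}.'' But those sections (in particular Proposition~\ref{prop:shimura_map2}, on which Proposition~\ref{prop:bm} relies) already use the present proposition. So you cannot invoke that material here. Moreover, before irreducibility is established the symbol $\tpi_v$ does not denote a single irreducible local representation, so the phrase ``each $\tpi_v$ is itself irreducible'' needs to be reformulated as a statement about the local components of each irreducible constituent $T_i$; and even once you show all $T_i$ are mutually isomorphic you must still invoke multiplicity one in the cuspidal spectrum of $\widetilde{\SL_2}$ to conclude there is only one summand.

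The paper avoids all of this local work. After observing (as you do) that Proposition~\ref{prop:hecke_linear} makes $f$ a Hecke eigenform outside $2\id N\cond l$, it notes that any two irreducible constituents $T_i,T_j$ of $\tpi$ are therefore locally isomorphic at almost all places and share the central character of $f$. Waldspurger's strong multiplicity one theorem for $\widetilde{\SL_2}$ \cite[Thm.~3]{waldspu_forum} then gives $T_i\simeq T_j$ directly, with no analysis whatsoever at the ramified places; combined with multiplicity one this yields irreducibility. The point you are missing is that for cuspidal automorphic representations of $\widetilde{\SL_2}$, local isomorphism at almost all places \emph{together with equality of central characters} already forces global isomorphism, so the delicate packet selection at bad primes that you attempt is unnecessary for this proposition.
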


\begin{proof}

	The space of $\tpi$ decomposes as a direct sum of irreducible
	subspaces $T_i$, each one appearing with multiplicity one.
	Since Proposition \ref{prop:hecke_linear} implies that $f$ is a Hecke
	eigenform outside a finite set of places, the subspaces $T_i$ are, in
	particular, locally isomorphic almost everywhere.
	Moreover, these subrepresentations have the same central character, the one
	of $f$.
	Then \cite[Thm.~3]{waldspu_forum} shows that $T_i \simeq T_j$,
	which implies the first claim.

	Proposition \ref{prop:hecke_linear} implies that $\tphi$ is a
	nonzero vector in $V_{\tpi}$ with the same Hecke eigenvalues as $g
	\otimes \chi$ outside $2 \id N \cond l$.
	By \cite[Prop.~4]{waldspu-coeffs}
	this implies that
	\[
		S_\psi(\tpi)
		\simeq \left(\pi \otimes \chi\right) \otimes \chi^{-1}
		\simeq \pi,
	\]
	which proves the second claim.
\end{proof}

Given a function $\epsilon: \mba \cup
\set{v \mid \id N}{\text{$\pi_v$ is special or supercuspidal}} \to
\{\pm1\}$ we denote,
following \cite[Sect.~3]{baruch-mao},
\[
	F^\epsilon(\pi) =
	\set{D \in F^\times}{\kro{D}{\pi_v} = \epsilon(v) \text{ for every } v};
\]
the symbols $\kro{D}{\pi_v}$ are defined in \cite[p.~346]{baruch-mao}.

Here we consider the function $\epsilon$ given by
$\epsilon(v) = \gamma(v)^{\val_v(\id N)}$,
where as in the Introduction we let 
$\val_v(\id N) = 1$ for $v \in \mba$.
Note that $F^\epsilon(\pi)$ contains every $D \in F^\times$~of type $\gamma$.

\begin{prop}\label{prop:shimura_map2}
	Let $\tpie$ be the representation of $\widetilde \SL_2(\FA)$ determined by
	$\pi$ and $\epsilon$ in \cite[Thm.~3.2]{baruch-mao}.
	Then $\tpi \simeq \tpie$.

	Moreover, for every $D \in F^\epsilon(\pi)$ and for every $v$ we have that
	$\tpi_v$ has a nontrivial $\psi_v^D$-Whittaker model and
	\begin{equation}\label{eqn:thetaD}
		\Theta\left(\pi_v\otimes\chic[v]{D},\psi_v^D\right) \simeq \tpi_v.
	\end{equation}
\end{prop}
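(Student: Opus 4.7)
The strategy is to verify that $\tpi$ satisfies the local conditions characterizing $\tpie$ in \cite[Thm.~3.2]{baruch-mao}. Proposition~\ref{prop:shimura_map} already provides $S_\psi(\tpi)=\pi$, so it suffices to exhibit a single $D_0\in F^\epsilon(\pi)$ for which \eqref{eqn:thetaD} holds at every place; then the uniqueness clause of Baruch--Mao's classification gives $\tpi\simeq\tpie$, and the same local statement propagates to all $D\in F^\epsilon(\pi)$ because $\kro{D}{\pi_v}=\kro{D_0}{\pi_v}$ forces the two local theta lifts to coincide.

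To produce $D_0$, I start from the hypothesis $f=\tlu(\varphi_g)\neq 0$ and pick some $\xi\in F^\times$ and $\id a\in\IF$ with $\lambda(\xi,\id a;f)\neq 0$; by Proposition~\ref{prop:kohnen} the pair $(\xi,\id a)$ is a discriminant. Since $\varphi_g$ is a Hecke eigenform and $\tlu$ is Hecke-linear up to the character $\chi_*$ (Proposition~\ref{prop:hecke_linear}), the standard Shimura-type identity for eigenforms of half-integral weight relates $\lambda(t^2D_0,\id a;f)$ to $\lambda(D_0,\id a';f)$ for the unique fundamental discriminant $(D_0,\id a')$ underlying $(\xi,\id a)$; in particular we may assume $\lambda(D_0,\id a';f)\neq 0$ with $(D_0,\id a')$ fundamental. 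Proposition~\ref{prop:kohnen} applied to this fundamental pair, together with the hypotheses \ref{hyp:Hl} and \ref{hyp:Hchi}, then forces $D_0$ to be of type $\gamma$ and hence $D_0\in F^\epsilon(\pi)$.

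The nonvanishing of $\lambda(D_0,\id a';f)$ translates, via the Fourier expansion \eqref{eqn:fourier}, into a nontrivial global $\psi^{D_0}$-Whittaker functional on $V_\tpi$. This functional factorizes as a product of local $\psi_v^{D_0}$-Whittaker functionals, so each $\tpi_v$ admits a nontrivial $\psi_v^{D_0}$-Whittaker model. Combining this with $S_\psi(\tpi_v)=\pi_v$ and invoking Waldspurger's local classification for irreducible genuine representations of $\widetilde{\SL_2}(F_v)$ in the Shimura correspondence with $\pi_v$ (as summarized in \cite[Sect.~3]{baruch-mao}), one obtains $\tpi_v\simeq\Theta(\pi_v\otimes\chic[v]{D_0},\psi_v^{D_0})$ at every place. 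Propagating to arbitrary $D\in F^\epsilon(\pi)$ via the local dichotomy and applying \cite[Thm.~3.2]{baruch-mao} yields the full statement.

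I expect the main obstacle to be the descent from $(\xi,\id a)$ to a fundamental $(D_0,\id a')$ of type $\gamma$: Proposition~\ref{prop:hecke_linear} controls Hecke linearity only at primes $\id p\nmid 2\id N\cond l$, so one must strip off squares at the good primes while ensuring the local invariants at bad primes are preserved under this descent. Once $D_0$ is secured, the remainder of the argument is a routine appeal to the local Waldspurger dichotomy.
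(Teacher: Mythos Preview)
Your overall strategy---produce a single $D_0\in F^\epsilon(\pi)$ with $\lambda(D_0,\id a';f)\neq 0$, read off the local Whittaker models, invoke Waldspurger's local dichotomy, and then propagate via \cite[Thm.~3.2]{baruch-mao}---is exactly what the paper does. The difference, and the place where your argument breaks, is how you manufacture $D_0$.

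You start from an arbitrary nonzero Fourier coefficient and try to descend to a fundamental one via ``the standard Shimura-type identity.'' You correctly flag this as the main obstacle, and it is a real one: the Shimura/Kohnen relations take the form $\lambda(t^2D_0,\cdot)=c_t\,\lambda(D_0,\cdot)$ only for $t$ supported at primes where Hecke linearity is available, and even then the implication $\lambda(t^2D_0,\cdot)\neq 0\Rightarrow\lambda(D_0,\cdot)\neq 0$ requires $c_t\neq 0$, which you have no control over. At primes dividing $2\id N\cond l$ no such relation is established in the paper at all, so you cannot strip squares there. Proposition~\ref{prop:kohnen} constrains the support of nonzero coefficients but does not by itself force the existence of a nonzero coefficient at a \emph{fundamental} discriminant.

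The paper sidesteps this entirely. Instead of descending from an unknown nonzero coefficient, it first chooses $D$ of type $\gamma$ with $\cond D$ prime to $2\cond l\id N$ and $L(1/2,\pi\otimes\chic{D})\neq 0$, which exists by \cite[Thm.~4]{waldspu_forum}. For such $D$ the partial case of Theorem~\ref{thm:formula}---already proved in Section~\ref{sect:formula} before Proposition~\ref{prop:bm} is invoked---gives $\lambda(D,\id a;f)\neq 0$ directly. This furnishes the nontrivial $\psi^D$-Whittaker model, hence $\Theta(\tpi,\psi^D)\neq 0$; combined with $S_\psi(\tpi)=\pi$ and the local bijectivity of $\Theta$ \cite[Thm.~1]{waldspu_forum} one gets \eqref{eqn:thetaD} for this $D$, and then \cite[Lem.~6]{waldspu_forum} propagates it to every $D'\in F^\epsilon(\pi)$. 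The point is that the logical order is: partial Theorem~\ref{thm:formula} $\Rightarrow$ Proposition~\ref{prop:shimura_map2} $\Rightarrow$ full Theorem~\ref{thm:formula}, and you were trying to prove Proposition~\ref{prop:shimura_map2} in isolation.
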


\begin{proof}
	Let $D \in F^\times$ of type $\gamma$ such that $\cond D$ is prime to $2
	\cond l \id N$ and $L(\pi\otimes\chic{D},1/2) \neq 0$
	(such $D$ exists by Theorem~\ref{thm:wforum}).
	Since $D \in F^\epsilon(\pi)$, we have that
	\begin{equation}\label{eqn:thetaD_aux}
		\Theta\left(\pi \otimes \chic{D},\psi^D\right)
		\simeq
		\tpie.
	\end{equation}
	Theorem~\ref{thm:formula}, already proved for this particular $D$,
    implies $\lambda(D,\id a; f) \neq 0$; hence
    $\tpi$ admits a
	nontrivial $\psi^D$-Whittaker model and so
    $\Theta(\tpi,\psi^D) \neq 0$.
	Since $S_\psi(\tpi) = \pi$ we have that
	$\Theta\left(\tpi,\psi^D\right) \otimes \chic{D} \simeq \pi$.
	In particular,
	$\Theta\left(\tpi_v,\psi_v^D\right) \otimes \chic[v]{D} \simeq \pi_v$
	for every $v$.
	Since the theta correspondence is locally a bijection
	(see \cite[Thm.~1]{waldspu_forum}) this proves
	\eqref{eqn:thetaD}, which together with \eqref{eqn:thetaD_aux}
    imply $\tpi \simeq \tpie$.

	Finally by \cite[Lem.~6]{waldspu_forum} we have that
	\begin{equation*}
		\Theta\left(\pi_v\otimes\chic[v]{D'},\psi_v^{D'}\right)
		\simeq
		\Theta\left(\pi_v\otimes\chic[v]{D},\psi_v^D\right)
		\simeq
		\tpi_v
	\end{equation*}
	for every $D' \in F^\epsilon(\pi)$.
	This shows
	that $\tpi_v$ has a nontrivial $\psi_v^{D'}$-Whittaker model,
	completing the proof.
\end{proof}

We use the notation from \cite{waldspu_forum} to describe local
irreducible representations.
Let $\mu$ denote a character of $F_v^\times$.
When $\mu^2 \neq \abs{\,\cdot\,}_v^{\pm1}$ 
we denote by $\pi_v(\mu,\mu^{-1})$ and by $\tpi_v(\mu)$ the
representations, of $\PGL_2(F_v)$ and $\widetilde{\SL_2}(F_v)$ respectively,
belonging to the principal series.
When $\mu^2 = \abs{\,\cdot\,}_v^{\pm1}$ we denote by
$\sigma_v(\mu,\mu^{-1})$ and by $\widetilde \sigma_v(\mu)$ the
corresponding special representations.
In the metaplectic case these representations depend on the additive character
$\psi_v$.

\begin{prop}\label{prop:tpiv}
	Let $v \in \mbf$.
	\begin{enumerate}
		\item
			Assume that $\val_v(\id N) = 0$ and write
			$\pi_v \simeq \pi_v(\mu,\mu^{-1})$.
			Then $\tpi_v \simeq \tpi_v(\mu)$.
		\item
			Assume that $\val_v(\id N) = 1$ and write
			$\pi_v \simeq \sigma_v(\mu,\mu^{-1})$.
			Then $\tpi_v \simeq \widetilde\sigma_v(\mu)$ if $\gamma(v) =
			\AL[v]{g}$, whereas $\tpi_v$ is supercuspidal if $\gamma(v) =
			-\AL[v]{g}$.
		\end{enumerate}
\end{prop}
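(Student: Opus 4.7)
The argument I would give deduces Proposition~\ref{prop:tpiv} directly from the identification $\tpi \simeq \tpie$ furnished by Proposition~\ref{prop:shimura_map2}, by reading off the local components from the explicit local construction of $\tpie$ in \cite[Thm.~3.2]{baruch-mao}. The plan splits cleanly according to whether $v$ ramifies in $\id N$.

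For part~(a), fix $v \nmid \id N$ and pick $D \in F^\epsilon(\pi)$ of type $\gamma$ with $v \nmid \cond D$; then $\chic[v]{D}$ is unramified, so $\pi_v \otimes \chic[v]{D}$ is again an unramified principal series with the same unordered parameter $\{\mu,\mu^{-1}\}$. By \eqref{eqn:thetaD} we have $\tpi_v \simeq \Theta(\pi_v \otimes \chic[v]{D}, \psi_v^D)$, and the local theta correspondence tables (see \cite[Sect.~3]{baruch-mao} following \cite{waldspu-coeffs}) send the right-hand side to the metaplectic principal series $\tpi_v(\mu)$, which gives the claim.

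For part~(b), fix $v \parallel \id N$, so that $\pi_v \simeq \sigma_v(\mu,\mu^{-1})$ is special and $\epsilon(v) = \gamma(v)^{\val_v(\id N)} = \gamma(v)$. The Waldspurger--Baruch--Mao local dichotomy states that the Shimura preimage of $\sigma_v(\mu,\mu^{-1})$ contains exactly two irreducible classes of $\widetilde\SL_2(F_v)$, namely the metaplectic special representation $\widetilde\sigma_v(\mu)$ and a supercuspidal one; the explicit description of $\tpie_v$ in \cite[Thm.~3.2]{baruch-mao} selects $\widetilde\sigma_v(\mu)$ precisely when $\epsilon(v) = \AL[v]{g}$, and the supercuspidal alternative when $\epsilon(v) = -\AL[v]{g}$. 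Substituting $\epsilon(v) = \gamma(v)$ and using $\tpi_v \simeq \tpie_v$ yields the stated dichotomy.

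The only delicate point, which I expect to be the main obstacle to a fully self-contained write-up, is verifying that the local tables in \cite{baruch-mao}, which are stated over $\Q$, apply verbatim at each place $v$ of $F$. This should cause no real difficulty because the local theta correspondence, the Waldspurger dichotomy, and the classification of genuine representations of the metaplectic group used are purely local statements that do not depend on the global field of definition; once this transfer is observed the proof reduces to quoting the appropriate local formula.
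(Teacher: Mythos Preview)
Your approach is essentially the same as the paper's---both deduce the local components from the theta correspondence via $\tpi\simeq\tpie$---but the paper cites Waldspurger's \cite{waldspu_forum} directly rather than Baruch--Mao's repackaging. For~(a) the paper applies \cite[Lem.~38]{waldspu_forum} to obtain $\tpi_v\simeq\Theta(\pi_v,\psi_v)$ immediately (no auxiliary $D$), and then \cite[Prop.~4]{waldspu_forum} gives $\tpi_v(\mu)$; this bypasses a slip in your argument: with only $v\nmid\cond D$ the character $\chic[v]{D}$ is unramified but not necessarily trivial, so the parameter of $\pi_v\otimes\chic[v]{D}$ is $\{\mu\chic[v]{D},\mu^{-1}\chic[v]{D}\}$, not $\{\mu,\mu^{-1}\}$ (easily fixed by also requiring $\tkro Dv=1$, or by invoking the compatibility between $\tpi_v(\mu\chic[v]{D})$ relative to $\psi_v^D$ and $\tpi_v(\mu)$ relative to $\psi_v$). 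For~(b) the paper again uses \cite[Prop.~4]{waldspu_forum}: writing $\mu=\abs{\,\cdot\,}_v^{1/2}\chic[v]{\tau}$ with $\tkro{\tau}{v}=-\AL[v]{g}$ and picking $D$ with $\tkro Dv=\gamma(v)$, the dichotomy is computed explicitly via $\tkro{\tau D}{v}$ rather than read off from \cite[Thm.~3.2]{baruch-mao}. Since \cite{waldspu_forum} works over arbitrary nonarchimedean local fields, this also dissolves your concern about transferring from~$\Q$.
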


\begin{proof}
	\leavevmode
	\begin{enumerate}
	\item
		By Proposition \ref{prop:shimura_map} and \cite[Lem.~38]{waldspu_forum}
		we have that $\tpi_v \simeq \Theta_v(\pi_v,\psi_v)$.
		Then the result follows from \cite[Prop.~4]{waldspu_forum},
		where it is proved that $\Theta_v(\pi_v,\psi_v) \simeq \tpi_v(\mu)$.
	\item
		Write $\mu = \abs{\,\cdot\,}^{1/2}_v \chi^{\tau}_v$, with $\tau
		\in \OOx[v]$.
		Pick $D \in F^\times$ such that $\kro Dv=\gamma(v)$.
		By \eqref{eqn:thetaD} we have that
		$\tpi_v \simeq \Theta(\sigma_v(\nu,\nu^{-1}),\psi_v^D)$,
		with $\nu = \mu \chi_v^D$.
		By \cite[Prop.~4]{waldspu_forum} we have that
		$\tpi_v \simeq \widetilde\sigma(\nu,\psi_v^D) \simeq
		\widetilde\sigma(\mu,\psi_v)$ if
		$\kro{\tau D}v = -1$, and $\tpi_v$ is supercuspidal otherwise.
		This proves the result, since $\kro{\tau}{v}=-\AL[v]{g}$.
\end{enumerate}
\end{proof}

\section{Extending the main formula}\label{sect:extending}

The first part of the proof of Theorem~\ref{thm:formula} given in 
Sect.~\ref{sect:formula} imposed restrictions on the discriminants considered.
As mentioned in the second part of that proof, these restrictions can be removed 
by using Theorem~\ref{thm:wforum} together with the following result,
which show that given two discriminants of type $\gamma$, then the
assertion of Theorem~\ref{thm:formula} holds for one of them if and only if it
holds for the other.

\begin{prop}\label{prop:bmw}

	Let $f=\tlu(\varphi_g)$ as in Sect.~\ref{sect:shimura}. Then
\begin{multline}
	\label{eqn:bmw}
	2^{\omega(D_1,\id N)} \,
	\idn[1]{a} \,
	\abs{ \lambda\left(D_1, \id a_1;f\right) }^2 \cdot
	L\left(g\otimes\chic{D_2},1/2\right) \abs{D_2}^{\mbk+\mathbf{1/2}} \\
	=
	2^{\omega(D_2,\id N)} \,
	\idn[2]{a} \,
	\abs{ \lambda\left(D_2, \id a_2;f\right) }^2 \cdot
	L\left(g\otimes\chic{D_1},1/2\right)
	\abs{D_1}^{\mbk + \mathbf{1/2}}
\end{multline}
for every $D_i\in F^\times$ of type $\gamma$, $i = 1,2$.

Here $\id a_i$ is the unique ideal such that $(D_i,\id a_i$) is a fundamental
discriminant.

\end{prop}

This result is a refinement of \cite[Cor.~4.7]{baruch-mao}
(see also \cite[Cor.~2]{waldspu-coeffs}).
Its proof relies heavily on \cite{baruch-mao}, to where we refer the reader for
further details.
The strategy is to use the main result from op.cit. to show that \eqref{eqn:bmw}
holds up to a product of local constants over certain bad primes,
and then showing that these local constants agree.
In order to state that result, we need to introduce the following ingredients.

\medskip

From here on we let $\varphi$ denote the automorphic form on $\PGL_2(\FA)$
corresponding to $g$ (which should not be confused with the quaternionic forms
considered on previous sections).
Let $W_\varphi$ denote the Whittaker function on $\PGL_2\left(\FA\right)$
relative to the character $\psi$, and for every place $v$ denote by $\mathcal
L_v$ a nonzero Whittaker functional on $V_{\pi_v}$ with respect to $\psi_v$.

For $\tphi \in \tpi$ and $D \in F^\times$ we let
$\widetilde{W}_{\tphi}^D$ denote the Whittaker function on
$\widetilde{\SL_2}\left(\FA\right)$ relative to the character $\psi^D$,
given by
\begin{equation}\label{eqn:whitt}
	\widetilde{W}_{\tphi}^D(g)
	=
	\int_{F_\mbA/F}
	\tphi\bigl(\smat{1 & x \\ 0 & 1} g\bigr) \,
	\psi^D(-x) \,
	dx.
\end{equation}

Assume that $D$ is of type $\gamma$.
As in Sect.~\ref{sect:shimura},
letting $\epsilon(v) = \gamma(v)^{\val_v(\id N)}$
we have that $D \in F^\epsilon(\pi)$; hence by
Proposition~\ref{prop:shimura_map2}
there exists a nonzero Whittaker functional $\tlv D$ on $V_{\tpi_v}$
with respect to $\psi_v^D$.
Denote by $\normD[D][\cdot]$ the norm on $V_{\tpi_v}$ induced by $\tlv D$
as in \cite[(2.9)]{baruch-mao}.

Finally, if $v \in \mbf$ and $D \in \OO[v]$ we say that $D$ is a
\emph{fundamental discriminant} if the discriminant of
$F_v(\sqrt D)/F_v$ equals $D \OO[v]$.
Note that if $D \in F^\times$ and $(D,\id a)$ is a fundamental discriminant,
then $D$ is a fundamental discriminant at $v$ for every $v \in \mbf$ such that
$\id a \OO[v] = \OO[v]$.

\medskip

\begin{thm}[{\cite[Thm. 4.3]{baruch-mao}}]
\label{thm:bm}
Let $\tphi \in \tpi$.
Let $D \in F^\times$ of type $\gamma$.
Then for every finite set of places $\Sigma$ containing
$\SN$ and the even places
\begin{multline}\label{eqn:bm}
	\abs{\widetilde W^D_{\tphi}(e)}^2
	\Abs{\varphi}^2
	\prod_{v \in \Sigma}
	L\left(\pi_v\otimes\chic[v]{D},1/2\right)
	\abs{ \mathcal L_v(\varphi_v) }^2
	\normD[D][\tphi_v]^2
	\abs{D}_v
	= \\
	L\left(\pi\otimes\chic{D},1/2\right)
	\Abs{\tphi}^2
	\abs{W_\varphi(e)}^2
	\prod_{v \in \Sigma}
	\abs{ \tlv D\left(\tphi_v\right) }^2
	\Abs{ \varphi_v}_v^2,
\end{multline}
as long as $D$ is a fundamental discriminant outside $\Sigma$.
\end{thm}

\begin{rmk}
Since $D$ is of type $\gamma$, in particular $D \in F^\epsilon(\pi)$.
By propositions \ref{prop:shimura_map} and \ref{prop:shimura_map2} we have
that $\tpie \simeq \tpi$ and $S_\psi(\tpie) = \pi$.
This shows that the hypotheses of the original statement of \cite[Thm.
4.3]{baruch-mao} are satisfied.
We point out that, though Baruch and Mao state their formula for
square-free integers $D$, they only use this fact outside $\Sigma$; see note 
at the bottom of \cite[p.~357]{baruch-mao}.
\end{rmk}

From here on, as in Sect.~\ref{sect:shimura}, we let $\tphi \in \tpi$ be the
vector corresponding to $f$.
Let $D \in F_v^\times$, and assume that
there exists a nonzero Whittaker functional $\tlv D$ on $V_{\tpi_v}$
with respect to $\psi_v^D$.
Then we let
\begin{align*}
	\mcE_v(D) &=
	\frac
	{
	e_v(D)
	}
	{
	\abs{D}_v
	L\left(\pi_v \otimes\chic[v]{D},1/2\right)
	}
	\cdot
	\frac
	{
	\abs{ \tlv{D}(\tphi_v) }^2
	}
	{
	\normD^2
	}
	,
	\qquad \text{where} \\
	e_v(D) &=
	\begin{cases}
		e^{2\pi \abs{D}_v} 
        \abs{D}_v^{1/2-k_v} & \text{if $v \in \mba$,} \\
        2 & \text{if $v \mid \cond D$ and $v \mid \id N$,} \\
		1 & \text{otherwise}.
	\end{cases}
\end{align*}
Note that $\mcE_v(D)$ does not depend on the choice of the Whittaker functional
$\tlv{D}$.
It also remains unchanged if $\tphi_v$ is multiplied by a nonzero scalar.
If a Whittaker functional does not exist, we let $\mcE_v(D) = 0$.

\begin{lemma}\label{lem:ednr}
	Let $v \in \mbf$ be such that $\val_v(\id N) = 0$.
	Then there exists a constant $c_v = c_v(\varphi_v)$ such that
	$\mcE_v(D) = c_v$ for every fundamental discriminant $D \in \OO[v]$.
\end{lemma}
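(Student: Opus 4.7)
The plan is to reduce the lemma to the fact that $\tphi_v$ is essentially the spherical vector of an unramified local representation, where all the relevant local quantities admit closed form expressions.

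First I would invoke Proposition~\ref{prop:tpiv}~(a): since $\val_v(\id N)=0$, writing $\pi_v \simeq \pi_v(\mu,\mu^{-1})$ gives $\tpi_v \simeq \tpi_v(\mu)$, an unramified principal series of $\widetilde{\SL_2}(F_v)$. In particular the space of spherical vectors in $V_{\tpi_v}$ is one dimensional.

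Next I would show that $\tphi_v$ is, up to a nonzero scalar, the standard spherical vector. The level of $f = \tlu(\varphi_g)$ is $4\id N\id N'\id N''$ by Theorem~\ref{thm:main}, which is prime to $v$; moreover $\chi_v$ is unramified at $v$ (by \ref{hyp:Hchi}, since $v\nmid\id N$) and $v\nmid\cond l$ (by our choice of $l$). The transformation law \eqref{eqn:modularity} defining $\tphi$ from $f$ then forces $\tphi_v$ to be fixed by the standard maximal compact subgroup of $\widetilde{\SL_2}(F_v)$, hence spherical. Since $\mcE_v(D)$ is insensitive to rescaling $\tphi_v$, we may assume that $\tphi_v$ is the normalized spherical vector.

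With this identification, both the local Whittaker value $\tlv{D}(\tphi_v)$ and the local norm $\normD^2$ admit well known closed form expressions in terms of the Satake parameter $\mu(\piv)$ and the valuation $\val_v(D)$. For a fundamental discriminant $D\in\OO[v]$, one checks directly that the product
\[
\frac{1}{\abs{D}_v\,L\bigl(\pi_v\otimes\chic[v]{D},1/2\bigr)}\cdot\frac{\abs{\tlv{D}(\tphi_v)}^2}{\normD^2}
\]
cancels out the dependence on $\val_v(D)$ and on whether $\chic[v]{D}$ is trivial, ramified, or the unramified quadratic character, collapsing to a constant depending only on $\mu$ (equivalently, on $\varphi_v$) and the ambient measure normalization used to define $\normD$.

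The main obstacle is the even residue characteristic case $v\mid 2$, where fundamental discriminants are classified by more delicate congruence conditions on $D$, and the case analysis of $\chic[v]{D}$ and of the corresponding Euler factor $L(\pi_v\otimes\chic[v]{D},1/2)$ has several subcases. Once these are dispatched, the uniformity of the resulting constant across all fundamental $D\in\OO[v]$ yields the value $c_v=c_v(\varphi_v)$ claimed in the lemma.
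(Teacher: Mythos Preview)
Your approach is sound at odd places and is essentially what the paper cites in its remark after the proof (Baruch--Mao, Prop.~8.1). The serious gap is at even places. You assert that the level $4\id N\id N'\id N''$ is prime to $v$, but this is false whenever $v\mid 2$: the factor $4$ forces $v$ to divide the level even when $\val_v(\id N)=0$. Consequently the transformation law \eqref{eqn:modularity} does \emph{not} show that $\tphi_v$ is fixed by the full maximal compact of $\widetilde{\SL_2}(F_v)$, and in fact at even $v$ there is no canonical splitting of the metaplectic cover over $\SL_2(\OO[v])$, so the naive notion of ``spherical vector'' is not available.

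The paper fixes this by identifying $\tphi_v$ not via invariance under a compact open, but via a support condition on Fourier coefficients: Proposition~\ref{prop:kohnen} shows that $\lambda(D,\OO;f)=0$ unless $D$ is a discriminant (i.e.\ $D\equiv\xi^2\pmod{4\OO}$ locally), and then \cite[Prop.~13.3]{hiraga-ikeda} converts this Kohnen-plus condition into the statement that $\tphi_v$ is fixed by the idempotent $E_v^K$, hence equals the distinguished vector $f_K^+$ of Hiraga--Ikeda. The explicit Whittaker values for $f_K^+$ are then read off from \cite[Prop.~4.3]{hiraga-ikeda}. Your outline lacks this ingredient; the ``delicate case analysis'' you allude to for even $v$ cannot succeed without first pinning down which vector $\tphi_v$ actually is, and the level argument you give does not do that.
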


\begin{proof}
Since $v \nmid \id N$ we have that
$\pi_v \simeq \pi_v(\mu,\mu^{-1})$, with 
$\mu$ an unramified character of $F_v^\times$ given by
$\mu(x) = \abs{x}_v^s$, with $s \in i \R$.
Let $q = \abs{\OO[v] / \piv \OO[v]}$.
Then by \cite[(200)]{godement_JL} we have that
\begin{equation}
	\label{eqn:ltwist}
	L\left(\pi_v\otimes \chic[v]{D},1/2\right)
	= 
	\abs{1-\kro Dv q^{-1/2-s}}^{-2}.
\end{equation}

By Proposition \ref{prop:tpiv} we have that $\tpi_v \simeq \tpi_v(\mu)$.
Then we can consider the Whittaker functional $\tlv D$ given in
\cite[(8.3)]{baruch-mao}.
In particular using \cite[(8.5)]{baruch-mao} we get that 
$\abs{D}_v \normD^2$ does not depend on $D$.

By the first claim of Proposition~\ref{prop:kohnen} we have that
$\lambda(D,\OO;f) = 0$ unless there exists $\xi \in \OO$ such that $D \equiv
\xi^2 \pmod{4\OO}$.
Then \cite[Prop.~13.3]{hiraga-ikeda}
implies that $\tphi_v$ is fixed by the idempotent
operator $E_v^{K}$ acting on $V_{\tpi_v}$.
Hence we can assume that $\tphi_v$ equals the vector $f_K^+$ appearing in
\cite[Prop.~4.6]{hiraga-ikeda}.

Finally, we can compute $\tlv D(\tphi_v)$ using \cite[Prop.~4.3]{hiraga-ikeda},
since the Whittaker functional they consider in that proposition agrees with
$\tlv D$.
More precisely, being $D$ fundamental the integer $\cond D$
defined in \cite[Def.~2.2]{hiraga-ikeda} (which should not be confused with
our notation for the conductor) equals $0$.
This implies that the factor $\Psi(D,q^{-s})$ appearing in
\cite[Prop.~4.3]{hiraga-ikeda} formula equals $1$, so we get that
\[
	\tlv D(\tphi_v)
	=
	\left(1-q^{-1-2s}\right)
	\left(1-\kro Dv q^{-1/2-s}\right)^{\!-1}
	\!\!\!,
\]
which together with \eqref{eqn:ltwist} completes the proof.
\end{proof}

We remark that though Hiraga and Ikeda consider the case of trivial level for
stating their main results, the local auxiliary results quoted above are valid
since they are used in the case $v \nmid \id N$.
We also remark that in the case when $v$ is odd the result above follows 
by \cite[Prop.~8.1]{baruch-mao};
furthermore, the case when $F = \Q$ and $v = 2$
can be proved using \cite[Prop.~9.5]{baruch-mao}.
The results from \cite{hiraga-ikeda} allow us to give a general proof.

\begin{lemma}\label{lem:edspecial}
	Let $v \in \mbf$ be such that $\val_v(\id N) = 1$.
	Assume that $v$ is odd.
	Then there exists a constant $c_v = c_v(\varphi_v)$ such that
	$\mcE_v(D) = c_v$ for every fundamental discriminant $D \in \OO[v]$
	such that $\kro Dv \neq - \AL[v]{g}$.
\end{lemma}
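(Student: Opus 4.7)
The plan is to mirror the structure of the proof of Lemma~\ref{lem:ednr}, substituting the explicit local Whittaker computations of Baruch--Mao \cite{baruch-mao} (which handle precisely the case $v\parallel\id N$ with $v$ odd) for the Hiraga--Ikeda input used there. I would split by Proposition~\ref{prop:tpiv}(b) into two subcases according to whether $\tpi_v$ is special or supercuspidal: in subcase~(I), $\gamma(v)=\AL[v]{g}$, giving $\tpi_v\simeq\widetilde\sigma_v(\mu)$ with $\mu=\abs{\,\cdot\,}_v^{1/2}\chi_v^\tau$ and $\kro\tau v=-\AL[v]{g}$; in subcase~(II), $\gamma(v)=-\AL[v]{g}$, giving $\tpi_v$ supercuspidal. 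In both cases $\pi_v\simeq\sigma_v(\mu,\mu^{-1})$ is special, and $L(\pi_v\otimes\chic[v]{D},1/2)$ is a standard function of $\kro Dv$ analogous to~\eqref{eqn:ltwist}: a nonzero rational expression in $\kro Dv$ when $\chic[v]{D}$ is unramified, and $1$ when $\chic[v]{D}$ is ramified.

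The key step is to identify $\tphi_v$ up to a nonzero scalar as the local vector on which Baruch and Mao compute their explicit formulas. For this I would combine the modularity of $f=\tlS(\varphi_g)$ under $\Ga{4\Nchi}$, which controls the $K_0(\piv)$-transformation of $\tphi_v$, with the constraints on the Fourier coefficients given by Proposition~\ref{prop:kohnen}: part~(a) fixes the unit-discriminant support to $\kro Dv=\gamma(v)$, while (b) and (c) control the ramified case. Together with the Hecke eigenvalue information of Proposition~\ref{prop:hecke_linear}, this pins $\tphi_v$ down on the unique line in $V_{\tpi_v}$ characterized by its Iwahori transformation and Atkin--Lehner sign in subcase~(I), respectively on the line of the local new vector of the supercuspidal representation in subcase~(II). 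Any scalar of proportionality is absorbed into the constant $c_v=c_v(\varphi_v)$.

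With $\tphi_v$ so identified, the explicit Baruch--Mao formulas for $\tlv{D}(\tphi_v)$ and $\normD^2$ on this vector apply (the analogue of \cite[Prop.~8.1]{baruch-mao} in subcase~(I), and the supercuspidal computation of \cite[Sect.~9]{baruch-mao} in subcase~(II)). Combined with the $L$-factor above, the $\abs{D}_v$ normalization, and the factor $e_v(D)$, which contributes a $2$ exactly when $v\mid\cond D$, all $D$-dependence cancels for fundamental discriminants with $\kro Dv\neq-\AL[v]{g}$, and one reads off a constant $c_v$. The main obstacle is the identification of $\tphi_v$ in the second step: the local vector on which Baruch--Mao state their formulas arises from an \emph{a priori} different construction than ours, so the matching requires careful tracking of Iwahori invariance, central character, and Atkin--Lehner sign. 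The range $\kro Dv\neq-\AL[v]{g}$ is precisely the one on which those formulas collapse to the clean form yielding a constant, consistent with Remark~\ref{rmk:even}'s expectation that an analogous extension to even $v$ should be possible along the same lines.
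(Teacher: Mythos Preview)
Your broad strategy matches the paper's---identify $\tpi_v$ via Proposition~\ref{prop:tpiv}(b), pin down $\tphi_v$ from modularity, then invoke an explicit Baruch--Mao computation---but you overcomplicate the identification step and carry a case the paper does not treat.

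The paper's argument is essentially three lines. From Proposition~\ref{prop:tpiv} it takes $\tpi_v\simeq\widetilde\sigma_v(\mu)$ (your subcase~(I)); Proposition~\ref{prop:modularity}(a) alone shows $\tphi_v$ is Iwahori-fixed; and \cite[Prop.~8.4]{baruch-mao}, which is stated precisely for an Iwahori-fixed vector in $\widetilde\sigma_v(\mu)$, yields the constancy of $\mcE_v(D)$ directly for fundamental $D$ with $D\tau\notin(\OOx[v])^2$. You do not need Proposition~\ref{prop:kohnen}, Hecke eigenvalues, or Atkin--Lehner matching here: Iwahori invariance already cuts out a line in $V_{\tpi_v}$, and any scalar is absorbed as you say. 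Note also that the correct reference is \cite[Prop.~8.4]{baruch-mao}, not an analogue of their Prop.~8.1 (which handles the unramified principal series) nor anything in their Sect.~9.

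The paper does not address your subcase~(II). Its proof opens by asserting $\tpi_v\simeq\widetilde\sigma_v(\mu)$, which by Proposition~\ref{prop:tpiv}(b) presupposes $\gamma(v)=\AL[v]{g}$. This is all that is needed downstream: in Proposition~\ref{prop:edf} one only compares $D_1,D_2$ of type $\gamma$ at $v$, and when $\gamma(v)=-\AL[v]{g}$ both are units with the same quadratic symbol, hence differ by a unit square and are handled by Lemma~\ref{lem:edunit} rather than by the present lemma. So your supercuspidal analysis is unnecessary for the paper's purposes.
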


\begin{proof}
	By Proposition \ref{prop:tpiv} we have that
	$\tpi_v \simeq \widetilde\sigma_v({\mu})$ with
	$\mu = \abs{\,\cdot\,}^{1/2}_v \chi_v^{\tau}$ and 
	$\tau \in \OOx[v]$ such that $\kro{\tau}v = -\AL[v]{g}$.
	In particular $D \tau \notin (\OOx[v])^2$.
	Moreover,
	Proposition \ref{prop:modularity} (a)
	implies that $\tphi_v$ is fixed by the Iwahori subgroup.
	Then the result follows by applying \cite[Prop.~8.4]{baruch-mao}.
\end{proof}

\begin{rmk}
	\label{rmk:even}
	We expect this result to hold for even $v$ as well.
	This is ongoing work, and can be proved generalizing to even primes the
	framework given in \cite[Sect.~8]{su} for studying Steinberg
	representations.
\end{rmk}

\begin{lemma}\label{lem:edunit}
	Let $v \in \mbf$.
	Let $D_1,D_2 \in F_v^\times$ be such that there exists $t_v \in \OOx[v]$
	with $D_1 = D_2\, t_v^2$.
	Then $\mcE_v(D_1) = \mcE_v(D_2)$.
\end{lemma}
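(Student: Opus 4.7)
The plan is to verify, factor by factor in the definition of $\mcE_v(D)$, that each ingredient is invariant under $D\mapsto D\,t_v^2$ for $t_v\in\OOx[v]$. The elementary pieces are immediate: $\abs{D}_v$ is unchanged since $\abs{t_v}_v=1$; the quadratic character $\chic[v]{D}$ depends only on the class of $D$ in $F_v^\times/(F_v^\times)^2$, so the local $L$-factor $L(\pi_v\otimes\chic[v]{D},1/2)$ coincides for $D_1$ and $D_2$; and at finite $v$ the factor $e_v(D)$ depends only on whether $v\mid\cond D$, which is insensitive to multiplication by $(\OOx[v])^2$.

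For the Whittaker piece $\abs{\tlv{D}(\tphi_v)}^2/\normD^2$ my plan is to relate the functionals $\tlv{D_1}$ and $\tlv{D_2}$ via the metaplectic diagonal action. Let $\tilde h(t_v)\in\widetilde{\SL_2}(F_v)$ be a lift of $\smat{t_v&0\\0&t_v^{-1}}$. The conjugation identity
\[
\tilde h(t_v)\,n(x)\,\tilde h(t_v)^{-1}=n(t_v^2\,x),
\]
valid in the metaplectic cover, shows that if $\tlv{D_2}$ is a nonzero $\psi_v^{D_2}$-Whittaker functional then $\tlv{D_2}\circ\tpi_v(\tilde h(t_v))$ is a nonzero $\psi_v^{D_1}$-Whittaker functional, and hence up to scalar coincides with $\tlv{D_1}$. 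Because $\tpi_v(\tilde h(t_v))$ is unitary on $V_{\tpi_v}$, it preserves the Kirillov inner product underlying $\normD[\vardot]$, and consequently the ratio for $D_1$ evaluated at $\tphi_v$ equals the ratio for $D_2$ evaluated at $\tpi_v(\tilde h(t_v))\tphi_v$. The claim therefore reduces to showing $\tpi_v(\tilde h(t_v))\tphi_v=\alpha\,\tphi_v$ with $\abs{\alpha}=1$.

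This last step follows from the modularity relation \eqref{eqn:modularity}: the matrix $\smat{t_v&0\\0&t_v^{-1}}$ lies in the adelized $\Gamma_{\id c}$ for the level $\id c$ of $f$, so $\tphi=f_\mbA$ transforms by the character value $\chi_{\id c}(t_v)$ together with the automorphy factor $K_\ah$, both of unit modulus on $\OOx[v]$. The main obstacle is the metaplectic cocycle: at dyadic and ramified places the lift $\tilde h(t_v)$ involves the Weil index and the Hilbert symbol, so $\tphi_v$ need not be literally $\tilde h(t_v)$-invariant, and a direct computation of $\alpha$ would be delicate. Fortunately only $\abs{\alpha}$ enters the statement, and every factor in the transformation law for $\tphi$ has absolute value one on $\OOx[v]$, which bypasses the need for an explicit cocycle computation.
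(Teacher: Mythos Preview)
Your argument is correct and follows the same route as the paper: relate $\tlv{D_1}$ to $\tlv{D_2}$ through the diagonal metaplectic action, observe that the induced norm is preserved, and reduce to showing $\tpi_v(\tilde h(t_v))\tphi_v=\alpha\,\tphi_v$ with $\abs{\alpha}=1$. For this last step the paper simply cites \cite[p.~386]{waldspu-coeffs} (applicable by Proposition~\ref{prop:modularity}~(a)), which gives $\utv{t}\cdot\tphi_v=\delta_v\,\tphi_v$ for a root of unity $\delta_v$. Your direct appeal to~\eqref{eqn:modularity} has the right spirit, but one inaccuracy: the automorphy factor $K_\ah$ is purely archimedean and plays no role under right translation by a finite-place element; the operative fact is the right invariance of $f_\mbA$ under the local congruence subgroup at $v$ up to $\chi_{\id c}$ and the metaplectic cocycle, all of which indeed have modulus one.
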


\begin{proof}
	Let $t_v \in F_v^\times$ and denote $\utv t = \smat{t_v & 0 \\ 0 &
	t_v^{-1}}$.
	If $D_1 = D_2 \, t_v^2$, given a nonzero Whittaker functional $\tlv{D_2}$
	we obtain a nonzero Whittaker functional $\tlv{D_1}$ by letting
	\begin{equation}\label{eqn:whittaker!}
		\tlv{D_1}\left(\widetilde\alpha_v\right)
		=
		\tlv{D_2}\left(\utv t \cdot \widetilde\alpha_v\right)
		.
	\end{equation}
	In particular, the induced norms satisfy that
	\begin{equation}\label{eqn:localnorm}
		\normD[D_1][\widetilde\alpha_v]^2
		=
		\normD[D_2][\utv{t} \cdot \widetilde\alpha_v]^2
		=
		\normD[D_2][\widetilde\alpha_v]^2
		.
	\end{equation}

	Assume now that $t_v \in \OOx[v]$.
	Then by \cite[p.~386]{waldspu-coeffs}, which can be used by
	Proposition \ref{prop:modularity} (a)
	and can be used over
	other fields than $\Q$ due to its local nature, there exists a root of
	unity $\delta_v$ such that
	$\utv{t} \cdot \tphi_v = \delta_v\,\tphi_v$.
	This together with \eqref{eqn:whittaker!} and \eqref{eqn:localnorm}
	completes the proof.
\end{proof}

\begin{prop}\label{prop:edf}
	Let $v \in \mbf$.
	Let $D_1,D_2 \in \OO[v]$ be fundamental discriminants of type $\gamma$ at $v$.
	Then $\mcE_v(D_1) = \mcE_v(D_2)$.
\end{prop}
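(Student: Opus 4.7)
The plan is a case analysis on $v$ together with the structure of the type $\gamma$ condition at $v$, invoking Lemmas \ref{lem:ednr}, \ref{lem:edspecial}, and \ref{lem:edunit} in the appropriate situation.

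First, when $v \nmid \id N$, both $D_1$ and $D_2$ are fundamental discriminants in $\OO[v]$, so Lemma \ref{lem:ednr} immediately yields $\mcE_v(D_1) = \mcE_v(D_2) = c_v$. Next, when $v \mid \id N$ is odd with $\val_v(\id N) = 1$ and $\gamma(v) = \AL[v]{g}$, the type $\gamma$ condition allows $\kro{D_i}{v} \in \{\gamma(v), 0\}$; in particular $\kro{D_i}{v} \neq -\AL[v]{g}$, so Lemma \ref{lem:edspecial} applies to both $D_i$ and again $\mcE_v(D_1) = \mcE_v(D_2) = c_v$.

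It remains to handle the remaining cases: $\val_v(\id N) \geq 2$, or $v$ even with $v \mid \id N$, or $v$ odd with $\val_v(\id N) = 1$ and $\gamma(v) = -\AL[v]{g}$. In each of these the type $\gamma$ condition forces $\kro{D_i}{v} = \gamma(v) \in \{\pm 1\}$, so $v$ does not ramify in $F_v(\sqrt{D_i})$ and hence $D_i \in \OOx[v]$. Since moreover $\kro{D_1}{v} = \kro{D_2}{v}$, the local extensions $F_v(\sqrt{D_1})/F_v$ and $F_v(\sqrt{D_2})/F_v$ coincide, being either both trivial (when $\gamma(v) = -1$) or both equal to the unique unramified quadratic extension (when $\gamma(v) = 1$). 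Consequently $D_1/D_2 \in (F_v^\times)^2 \cap \OOx[v] = (\OOx[v])^2$, so we can write $D_1 = D_2 t^2$ for some $t \in \OOx[v]$ and Lemma \ref{lem:edunit} yields $\mcE_v(D_1) = \mcE_v(D_2)$.

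The substantive content is carried entirely by the three lemmas already proved; the main effort in this proposition is bookkeeping, namely verifying that the case split is exhaustive and that the type $\gamma$ condition restricts the possible values of $\kro{D_i}{v}$ as asserted, so as to reduce uniformly to one of those three local statements.
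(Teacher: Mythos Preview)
Your proof is correct and follows exactly the approach the paper intends: the paper's own proof is the single sentence ``This is immediate from lemmas \ref{lem:ednr}, \ref{lem:edspecial} and \ref{lem:edunit},'' and your argument is a careful unpacking of precisely how the type $\gamma$ condition at $v$ funnels each case into one of those three lemmas. Your case split is exhaustive and the reductions are accurate.
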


\begin{proof}
	This is immediate from Lemmas \ref{lem:ednr}, \ref{lem:edspecial} and
	\ref{lem:edunit}.
\end{proof}

\begin{prop}\label{prop:eda}
	Let $v \in \mba$.
	Let $D_1,D_2 \in F_v^\times$ be of type $\gamma$ at $v$.
	Then $\mcE_v(D_1) = \mcE_v(D_2)$.
\end{prop}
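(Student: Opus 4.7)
At an archimedean place $v \in \mba$, the condition of being of type $\gamma$ reduces to $\sgn(D) = \gamma(v)$, since $\kro{D}{v} = \sgn(D_v)$. Given $D_1, D_2$ of type $\gamma$, we may therefore write $D_1 = D_2 t^2$ for some $t > 0$. The construction from the proof of Lemma~\ref{lem:edunit} carries over to this setting: the prescription $\tlv{D_1}(\widetilde\alpha_v) := \tlv{D_2}(\utv{t} \cdot \widetilde\alpha_v)$ yields a valid nonzero $\psi_v^{D_1}$-Whittaker functional on $V_{\tpi_v}$ (whose existence is guaranteed by Proposition~\ref{prop:shimura_map2}), and the substitution $b = ta$ in the defining integral for $\normD[D_1]^2$ gives $\normD[D_1] = \normD[D_2]$ on $V_{\tpi_v}$. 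The crucial point is that this change of variables does not require $t$ to be a unit.

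Unlike the non-archimedean setting, $\utv{t} \cdot \tphi_v$ is no longer a scalar multiple of $\tphi_v$: the matrix coefficient $\tlv{D_2}(\utv{t} \cdot \tphi_v) = W_v^{D_2}(\utv{t})$ must be computed explicitly. By Proposition~\ref{prop:tpiv} together with the (skew-)holomorphy of $f$ of weight $3/2 + k_v$ at $v$, the representation $\tpi_v$ is the (anti)holomorphic discrete series of weight $3/2 + k_v$ with signature $\ah_v = \gamma(v)$, and $\tphi_v$ is (a multiple of) its extremal $K$-weight vector. The classical explicit formula for the Whittaker function of this vector, derived from the differential equation imposed by the extremal-weight condition, gives an expression of the form $W_v^{D_2}(\utv{a}) = C \cdot a^{\alpha} e^{-\beta a^2 \abs{D_2}_v}$ for positive constants $\alpha, \beta$ depending on $k_v$, where $C$ depends on $D_2$ but not on $a$. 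The parallel formula in the antiholomorphic case follows by complex conjugation.

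Specializing at $a = t$ yields
\[
    \frac{\abs{\tlv{D_1}(\tphi_v)}^2}{\abs{\tlv{D_2}(\tphi_v)}^2}
    =
    t^{2\alpha} \, e^{-2\beta(t^2 - 1)\abs{D_2}_v}.
\]
Combining this with the elementary identity $\abs{D_1}_v = t^2 \abs{D_2}_v$, with the explicit ratio of $e_v$ factors computed from the definition $e_v(D) = e^{2\pi \abs{D}_v} \abs{D}_v^{1/2 - k_v}$, and with the equality $L(\pi_v \otimes \chic[v]{D_1}, 1/2) = L(\pi_v \otimes \chic[v]{D_2}, 1/2)$—valid because the character $\chic[v]{D}$ at archimedean $v$ depends only on $\sgn(D)$—the values of $\alpha$ and $\beta$ dictated by the extremal-weight Whittaker function make all $t$-dependent contributions cancel in the ratio $\mcE_v(D_1) / \mcE_v(D_2)$, giving the desired equality.

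The main obstacle is pinning down the precise normalization of the Whittaker function of the half-integral-weight discrete series and verifying that the resulting exponents $\alpha, \beta$ match those dictated by the archimedean factor $e_v$; this should either be handled by a direct calculation using the lowering operator in the $(\mathfrak g, K)$-module, or by invoking the analogous archimedean computation already carried out in \cite{baruch-mao}.
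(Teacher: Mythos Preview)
Your approach is correct and aligns with the paper's: both reduce to the explicit archimedean Whittaker computation for the minimal-weight vector in the (anti)holomorphic discrete series, and both note that the local $L$-factors agree since $\chic[v]{D}$ depends only on $\sgn(D)$. The paper simply invokes \cite[Prop.~8.8]{baruch-mao} (applied to $D_i/2$ when $\gamma(v)=1$, and its mirror with $D\mapsto -D$ when $\gamma(v)=-1$, justified via the Whittaker model in \cite[p.~24]{waldspu_corresp}), whereas you unpack what that computation looks like before pointing to the same reference. Your final paragraph is exactly right: the ``main obstacle'' you identify---verifying that $\alpha=k_v+1/2$ and $\beta=\pi$ so the $t$-dependence cancels against $e_v(D)/\abs{D}_v$---is precisely the content of \cite[Prop.~8.8]{baruch-mao}, so there is no need to redo it.
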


\begin{proof}
	By
	Proposition \ref{prop:modularity} (a)
	we have that $\tphi_v$ is a vector of minimal weight in $\tpi_v$.
	Since $D_1 = D_2 \,t^2$ with $t \in F_v^\times$ we have
	$L(\pi_v \otimes \chic[v]{D_1},1/2) = L(\pi_v \otimes \chic[v]{D_2},1/2)$.
	\par
    In the case $\gamma(v) = 1$ the result follows by applying
	\cite[Prop.~8.8]{baruch-mao} to $D = D_i/2$.
	In the case when $\gamma(v) = -1$, i.e., when $\tphi_v$ is
	antiholomorphic, the result of Baruch and Mao also holds if we
	change $D$ by $-D$ in their formula for $e(\tphi_v,\psi^D)$
	as can be seen using the formula for the Whittaker model given in
	\cite[p.~24]{waldspu_corresp}.
\end{proof}

\begin{proof}[Proof of Proposition~\ref{prop:bmw}]

Let $D \in F^\times$ be of type $\gamma$, and let $\id a \in \IF$.
By our choice of $\psi$ the $(D,\id a)$-th Fourier coefficient of $f$
is given by
\begin{equation}\label{eqn:coeffsW}
	\lambda(D,\id a;f) =
	\chi_\mbf(t) \,
	\idn a^{1/2} \,
	e^{\pi \sum_{v \in \mba} \abs{D}_v} \,
	\widetilde W^D_{\ut t \cdot \tphi}(e)
\end{equation}
where $t \in \hatx F$ is such that $t \OO = \id a$ 
and $\ut t = \smat{t & 0 \\ 0 & t^{-1}}$ 
(see \eqref{eqn:fourier} and \eqref{eqn:whitt}).

We apply Theorem~\ref{thm:bm} to $\ut t \cdot \tphi$.
Since the norm on $V_\tpi$ is $\widetilde
\SL_2\left(\FA\right)$-equivariant,
using \eqref{eqn:whittaker!} and \eqref{eqn:localnorm} 
we can rewrite \eqref{eqn:bm} as

\begin{multline}\label{eqn:bm2}
	\abs{\widetilde W^D_{\ut t \cdot \tphi}(e)}^2
	\Abs{\varphi}^2
	\prod_{v \in \Sigma}
	L\left(\pi_v\otimes\chic[v]{Dt_v^2},1/2\right)
	\abs{ \mathcal L_v(\varphi_v) }^2
	\normD[Dt_v^2]^2
	\abs{D}_v
	= \\
	L\left(\pi\otimes\chic{D},1/2\right)
	\Abs{\tphi}^2
	\abs{W_\varphi(e)}^2
	\prod_{v \in \Sigma}
	\abs{ \tlv{Dt_v^2}\left(\tphi_v\right) }^2
	\Abs{ \varphi_v}_v^2.
\end{multline}

Now let $(D_i, \id a_i)$ be fundamental discriminants with $D_i$ of type
$\gamma$, and let $t_i \in \hatx F$ be such that $t_i \OO = \id a_i$.
We let $\Sigma$ be a finite set of places containing $\SN$ and the even places,
and satisfying that if $v \notin \Sigma$ then $t_{i,v} \in \OOx[v]$
(so that $D_i$ is fundamental at $v$).
Then \eqref{eqn:coeffsW} together with \eqref{eqn:bm2} applied to $D = D_i$ and
$t = t_i$ imply that
\begin{multline}
	\label{eqn:bmw_aux}
	2^{\omega(D_1, \id N)}
	\idn[2]{a} \,
	\abs{\lambda\left(D_1, \id a_1;f\right)}^2 \,
	L\left(g\otimes\chic{D_2},1/2\right) \,
	\prod_{v \in \Sigma} \mcE_v\left(D_2\, t_{2,v}^2\right) \,
	\abs{t_{2,v}}_v^2
	\\
	=
	2^{\omega(D_2, \id N)}
	\idn[1]{a} \,
	\abs{ \lambda\left(D_2, \id a_2;f\right) }^2 \,
	L\left(g\otimes\chic{D_1},1/2\right)  \,
	\prod_{v \in \Sigma} \mcE_v\left(D_1\, t_{1,v}^2\right) \,
	\abs{t_{1,v}}_v^2.
\end{multline}
By Propositions \ref{prop:edf} and \ref{prop:eda} we have that
$\mcE_v\left(D_1\, t_{1,v}^2\right) = \mcE_v\left(D_2\, t_{2,v}^2\right)$ for
every $v \in \Sigma$.
Then \eqref{eqn:bmw} follows from \eqref{eqn:bmw_aux}, since
$
	\prod_{v \in \Sigma} \abs{t_{i,v}}_v^2
	=
	\mathcal N(\mathfrak a_i)^{-2}
	.
$
\end{proof}


\section{The rational case}
\label{sect:Q}

Assume throughout this section that $F = \Q$.

\medskip

Given a positive integer $N$,
denote $\Sigma_N = \set{v}{v\mid N} \cup \{\infty\}$.
Let $g \in S_{2+2k}(N)$ be a normalized Hecke newform,
and let $\gamma : \Sigma_N
\to \{\pm1\}$ be such that 
\begin{equation}\label{eqn:signo_de_gammaQ}
	\prod_{v \mid N}
	\AL[v]{g}\,
	\gamma(v)^{\val_v(N)}
	=
	(-1)^{k+1} \gamma(\infty)
	\,
\end{equation}
(see \eqref{eqn:signo_de_gamma}).
Then Theorem~\ref{thm:main}, which generalizes the result of Baruch and Mao for
square-free levels mentioned in the Introduction (\cite[Thm. 10.1]{baruch-mao}),
can be stated in the following form:

\begin{thm}\label{thm:Q}
Assume that $N$ is not a square.
There exists a nonzero (holomorphic) cuspidal form $f\sg$ of weight $3/2+k$
whose Fourier coefficients $\lambda(\vert D \vert;f\sg)$ are effectively
computable and satisfy
\begin{equation}\label{eqn:formulaQ}
	L\left(1/2,g\otimes \chic{D}\right)
	=
	2^{\omega(D,N)} \, 
	c_{g,\gamma} \,
	\ll{g,g} \,
	\frac{1}{\abs{D}^{k+1/2}} \,
	\frac{\abs{ \lambda(\vert D \vert;f\sg) }^2}{\ll{f\sg,f\sg}}
\end{equation}
for every fundamental discriminant $D \in \Z$ such that
\begin{equation*}
	\kro Dv =
	\begin{cases}
		\gamma(v) \text{ or } 0 &
		\text{when $\val_v(N) = 1$, $v$ is odd and $\gamma(v) = \AL[v]{g}$,} \\
		\gamma(v) & \text{otherwise}.
	\end{cases}
\end{equation*}

Here $c_{g,\gamma}$ is given in $\eqref{eqn:c_gamma}$
and $\omega(D,N)$ denotes the number of primes dividing both $N$
and $D$.

The level of $f\sg$ is $4NN'N''$, where $N'$ is the
	product of the primes in the set $\set{v\parallel
N}{\gamma(v)\neq\AL[v]{g}}$ and
\begin{equation*}\label{eqn:N''}
	N'' =
	\begin{cases}
		4, & 2 \parallel N \text{ and } \AL[2]{g} \neq \gamma(2),\\
		4, & 4 \parallel N \text{ and } \AL[2]{g} \neq 1 ,\\
		2, & 8 \parallel N \text{ and } \AL[2]{g} \neq \gamma(2),\\
		1, & \text{otherwise}.
	\end{cases}
\end{equation*}
\par
\end{thm}

Note that the form in Theorem~\ref{thm:main} can be mapped 
to a holomorphic $f\sg$ as in the statement above by applying the isomorphism in
Proposition~\ref{prop:hilbert_holomorphic} with $u=-\gamma(\infty)$.

\medskip

Even though our construction is more general, for simplicity we choose a prime
$p$ such that $\val_p(N)$ is odd, we let $B$ be the quaternion algebra over $\Q$
ramified at $\infty$ and at $p$, and we consider a Pizer-Eichler order $R
\subseteq B$ of discriminant $N$.
The modular form $f\sg$ in Theorem~\ref{thm:Q} is given by $\tlu(\varphi)$,
where $\varphi$ is a quaternionic modular form on $B$ in Jacquet-Langlands
correspondence with $g$, and $\tlu$ is the map from \eqref{eqn:theta_map}.
Here $l$ and $\chi$ must be chosen according to $\gamma$; this choice, which
determines the weight functions involved in $\tlu$ (see \eqref{eqn:wadelic}),
can be done as follows.

Firstly, $l$ is required to satisfy that $L(1/2,g\otimes\chi^l) \neq 0$.
Furthermore, we require that $\gamma(\infty) \, l <
0$, and that for $q \mid N$
\begin{equation*}
	\kro lq =
	\begin{cases}
		-\gamma(q),& q = p,\\
		\gamma(q),& q \neq p.
	\end{cases}
\end{equation*}
For simplicity, $l$ can be chosen either as $l=1$ or as a prime not
dividing $N$ with $l \equiv 1 \pmod{4}$.
In particular $\cond{l} = (l)$ is square-free and prime to $2N$ as required in
Sect.~\ref{sect:construction}, and at most one weight function of type I is
involved.

A weight function of type II is required for every $q \mid N$ such that
$\gamma(q)^{\val_q(N)} \neq \AL[q]g$.
For such $q$ we let $\chi_q$ be an odd Dirichlet character of
conductor $q$ if $q$ is odd, and we let $\chi_2$ be the odd character of
conductor $4$ if $q=2$.
Then we let $\chi$ be the primitive Dirichlet character 
obtained by multiplying the characters $\chi_q$.
With this choice of $\chi$ we have that the level of $f_\gamma$ equals
$4\lcm(N,\cond{\chi}^2) = 4NN'N''$, with
$N',N''$ as above (see Proposition~\ref{prop:modularity}).

\medskip

When $N = p$ and $k=0$, by \eqref{eqn:signo_de_gammaQ} our theta series involve
a weight function of type II at $p$ if and only if $\gamma(\infty) = 1$.
Hence we recover the weight $3/2$ forms $g_-$ and $g_+$ from \cite{mrvt}
(not to be confused with our form $g$), according to
whether $\gamma(\infty)$ equals $-1$ or $1$.
Furthermore, in this case our result (and our construction) agrees with that of
\cite{mao}.

In the case of composite levels we recover the examples given in
\cite{pt_composite}; furthermore, the choice of $\chi$ given above explains
which are the weight functions of type II needed in the examples from op.cit.,
according to each function $\gamma$.
We remark that those examples were not shown to satisfy \eqref{eqn:formulaQ}
prior to this work.

\medskip

We refer the reader to \cite{mrvt,pt_composite} (and to
Sect.~\ref{sect:example}) for fully detailed examples.
We conclude this section illustrating briefly Theorem~\ref{thm:Q} in a
case of even level, which is not considered in op.cit.

\subsection{A form of even level}

We consider the cuspidal newform $g$ with LMFDB label
\href{https://www.lmfdb.org/ModularForm/GL2/Q/holomorphic/50/2/a/a/}{50.2.a.a}
(see \cite{lmfdb}).
This is the first modular of weight $2$ with rational eigenvalues 
whose level is exactly divisible by $2$ and is not square-free.
Its Atkin-Lehner eigenvalues are given by $\AL[2]{g} = 1, \AL[5]{g} = -1$.

There are four types $\gamma$ satisfying \eqref{eqn:signo_de_gammaQ}.
We focus our attention on the type given by
$\gamma(\infty) =
1
, \gamma(2) =
1
, \gamma(5) =
1
$.
Taking $l=-11$ and a weight function of type II at $5$ we get
\begin{equation*}
	f_{\gamma} =
		q +  q^{4} -  q^{16} - 3 q^{24} - 3 q^{41} + 3 q^{44} -  q^{49} -  q^{64} + 3 q^{76} + 3 q^{81} + 3 q^{89} + 3 q^{96}
	+ O(q^{100}).
\end{equation*}
According to Theorem~\ref{thm:Q} we have that
\begin{equation*}
	L\left(1/2,g\otimes \chic{D}\right)
	=
	2^{\omega(D,N)} \, 
	C_\gamma \,
	\frac{\abs{ \lambda(D;f\sg) }^2}{D^{1/2}}
\end{equation*}
for every positive fundamental discriminant $D$ such that
$\kro D2 = \kro D5 = 1$, where
$C_\gamma =
0.137248\dots\;$.

The form $f\sg$ has even fundamental discriminants in its support.
We expect our results to hold as well for these discriminants,
by allowing $\kro D2$ to be zero when $\val_2(N) = 1$ and $\gamma(2) =
\AL[2]{g}$ in Theorem~\ref{thm:Q}; see Remark~\ref{rmk:even}.
This was numerically verified in this case, as well as in the other examples of
modular forms of even weight considered in \cite{code}.

\begin{rmk}
	The central values $L\left(1/2,g\otimes\chi^{mD}\right)$ for odd
	fundamental discriminants $D$ and $m \in \{-4,8,-8\}$
	could, in principle, be obtained by applying 
	Theorem~\ref{thm:Q} to $g\otimes\chi^{m}$.
	But the latter has level $\lcm(50,m^2)$, which is a square.
	Our result does not apply in this case; a different family of
	orders must be considered
	(see \cite{ptII}, where the case of forms level $p^2$
	is treated).
	Nevertheless, this technique could be used in other cases, e.g. 
	for the form $g$ with label
	\href{https://www.lmfdb.org/ModularForm/GL2/Q/holomorphic/14/2/a/a/}{14.2.a.a}.
\end{rmk}


\section{An example over \texorpdfstring{$\Q(\sqrt3)$}{Qsqrt3}}
\label{sect:example}

Let $F = \Q(a)$, with $a^2 = 3$.
We let $\mba = \{v_1,v_2\}$, with $v_1(a) = -\sqrt 3$ and $v_2(a) = \sqrt 3$.
We have that $h_F = 1$ and $h_F^+ = 2$.
Moreover, every unit in $\OO$ is either totally positive or totally
negative.
In particular our construction will give skew-holomorphic modular forms that
cannot be mapped to holomorphic forms preserving the level
(see Remark~\ref{rmk:uholo}).

\medskip

We consider the cuspidal newform $g$ over $F$ with LMDFB label
\href{https://www.lmfdb.org/ModularForm/GL2/TotallyReal/2.2.12.1/holomorphic/2.2.12.1-47.2-b}{2.2.12.1-47.2-b}
(see \cite{lmfdb}); this is the first Hilbert modular
form over $F$ of weight $\mathbf 2$, prime level, rational Hecke eigenvalues,
and $-$ sign in the functional equation (so that, in particular, $L(1/2,g)=0$).
It has level $\id p = (4a -1)$, with $\idn p = 47$, and its
Atkin-Lehner eigenvalue at $\id p$ is $\varepsilon_g(\id p) = -1$.

Every $D \in F^\times$ with + sign in the functional equation for
$L(s,g\otimes\chic{D})$ is of one of the four types
listed in Table~\ref{tab:types}.
For $D$ not covered in this table
we have trivially $L(1/2,g\otimes\chic{D})=0$.

\begin{table}[ht]
\begin{tabular}{c cccc}
    Type of $D$ & $\sgn v_1(D)$ & $\sgn v_2(D)$ &
    $\kro{D}{\id p}$
    \\[6pt]\hline
    $\gamma_1$ & $+$ & $+$ & $-1$ or $0$ \\
    $\gamma_2$ & $-$ & $-$ & $-1$ or $0$ \\
    $\gamma_3$ & $+$ & $-$ & $1$ \\
    $\gamma_4$ & $-$ & $+$ & $1$ \\
\end{tabular}
\caption{Types with $\varepsilon_{g,\gamma_i} = 1$}
\label{tab:types}
\end{table}

\medskip

We let $B$ be the quaternion algebra over $F$ given by $i^2 = j^2 = -1, ij = -ji
= k$, which is ramified exactly at $\mba$.
We consider the Eichler order $R\subseteq B$ with discriminant $\id p$ given by
\[
	R = \ll{1,\, (-4a + 1)i,\, \tfrac12((a - 38)i + j),\, \tfrac12(a + 34i + k)}. \\
\]
It has class number $4$.
The elements $x_i \in \hatx B$ representing the right $R$-ideal classes have
norms given by
\begin{equation}
	\label{eqn:norms}
	\norm(x_1) = (1), \;
	\norm(x_2) = (6a-25), \;
	\norm(x_3) = (-4a+1), \;
	\norm(x_4) = (2a-1).
\end{equation}
They induce the following ternary lattices:
\begin{align*}
	L_{x_1} & = L_{x_3} =
	\ll{(-4a + 1)i,\, \tfrac12((a - 38)i + j),\, \tfrac12(a + 34i + k)},\\
	L_{x_2} & = L_{x_4} =
	\big\langle
	(6a - 25)i,\,
	\tfrac12((a - 504)i + j),\,
	\tfrac{1}{22}(-11a - (2a+1)(3736i + 4j + k))
			\big\rangle.
\end{align*}
Furthermore, the quaternionic modular form $\varphi$ corresponding to $g$ is
given by
\[
	\varphi
	=
	-\varphi_{x_1}
	+\varphi_{x_2}
	+\varphi_{x_3}
	-\varphi_{x_4}
    ,
\]
where $\varphi_x$ stands for $\varphi_{x,1}$ as given by~\eqref{eqn:qmf}.

\medskip

We now describe each modular form $f_\gamma = \tlu(\varphi)$ as defined by
\eqref{eqn:theta_map}.
Among the Hecke characters $\chi$ of smallest possible conductor satisfying
\ref{hyp:Hchi}, in each case we were able to choose one with $\chi_{\id p}$
quadratic, for convenience of computation;
otherwise Remark~\ref{rmk:oddchi} would have been applied.
Furthermore, in each case the auxiliary parameter $l \in F^\times$ satisfying
\ref{hyp:Hl} is chosen such that $\cond l$ is either trivial or prime, so that
we only need to consider at most one weight function of type I at $\cond {l}$
for computing the weight function $w$ given in \eqref{eqn:wadelic}.
For convenience, we also choose $l$ such that $(l, \OO)$ is fundamental.

\begin{itemize}
\item Let $\gamma_1$ be given by
	$\gamma_1(v_1) =
	1
	, \gamma_1(v_2) =
	1
	, \gamma_1(\id p) =
	-1
	$.
	We have that $\cond{\chi} = \OO$, and
	we take $l =
	-1
	$.
	Then $f_{\gamma_1}$ is holomorphic and has level $4\id p$.

\item Let $\gamma_2$ be given by
	$\gamma_2(v_1) =
	-1
	, \gamma_2(v_2) =
	-1
	, \gamma_2(\id p) =
	-1
	$.
	We have that $\cond{\chi} = \OO$, and
	we take $l =
	5
	$.
	Then $f_{\gamma_2}$ is antiholomorphic at both variables and has level $4\id
	p$.
	It can be mapped to a holomorphic form of the same level using
	Proposition~\ref{prop:hilbert_holomorphic}, letting $u = -1$.
\item Let $\gamma_3$ be given by
	$\gamma_3(v_1) =
	1
	, \gamma_3(v_2) =
	-1
	, \gamma_3(\id p) =
	-1
	$.
	We have that $\cond{\chi} = \id p$, and
	we take $l =
	4a + 5
	$.
	Then $f_{\gamma_3}$ is holomorphic at $v_1$ and antiholomorphic at $v_2$,
	and has level $4\id p^2$.
	It cannot be mapped to a holomorphic form of the same level using
	Proposition~\ref{prop:hilbert_holomorphic}, since there does not exist $u
	\in \OOx$ such that $\sgn u_{v_1} = 1$ and $\sgn u_{v_2} = -1$.
\item Let $\gamma_4$ be given by
	$\gamma_4(v_1) =
	-1
	, \gamma_4(v_2) =
	1
	, \gamma_4(\id p) =
	-1
	$.
	We have that $\cond{\chi} = \id p$, and
	we take $l =
	-4a+5
	$.
	Then $f_{\gamma_3}$ is antiholomorphic at $v_1$ and holomorphic at $v_2$,
	and has level $4\id p^2$.
	Again, it cannot be mapped to a holomorphic form of the same level.
\end{itemize}

\medskip

Finally, using \eqref{eqn:cfourier} we compute the first Fourier coefficients
$\lambda(D) = \lambda(D,\OO;f\ssg)$ with $(D,\OO)$ fundamental.
Since the four lattices $L_{x_i}$ are locally the same at $\id p$ and at
$\cond {l}$ (we chose the representatives for the ideal classes so that this was
true), the functions $w_x$ appearing in \eqref{eqn:cfourier} can be computed in
terms of $w$ using \eqref{eqn:wxw} and \eqref{eqn:norms}.
Moreover, we compute the corresponding central values $L_D = L(1/2,g\otimes
\chi^D)$ using Theorem~\ref{thm:formula}; see tables
\ref{tab:g1},
\ref{tab:g2},
\ref{tab:g3} and
\ref{tab:g4}.

The entries in these tables are ordered according to the norms $\mathcal N(D)$
of the discriminants.
Only one element of each coset $D \cdot (\OOx)^2$ was included, since for $u \in
\OOx$ we have that $\lambda(u^2 D)$ is related to $\lambda(D)$ by
\cite[(3.14b)]{shim-hh}.

\begin{table}[ht]
\begin{tabular}{cc|cc || cc|cc}
	$\mathcal N(D) $ & $D$ & $\lambda(D)$ & $L_D$
		&
	$\mathcal N(D) $ & $D$ & $\lambda(D)$ & $L_D$
	\\
\hline
$25$ & $5$ & $2$ & $4.574$ & $148$ & $-6a + 16$ & $2$ & $1.880$ \\ $33$ & $4a +
9$ & $-2$ & $3.981$ & $169$ & $13$ & $0$ & $0.000$ \\ $52$ & $-2a + 8$ & $-2$ &
$3.172$ & $193$ & $-12a + 25$ & $0$ & $0.000$ \\ $73$ & $-4a + 11$ & $0$ &
$0.000$ & $208$ & $-4a + 16$ & $-2$ & $1.586$ \\ $96$ & $4a + 12$ & $2$ &
$2.334$ & $244$ & $-2a + 16$ & $2$ & $1.464$ \\ $96$ & $-4a + 12$ & $-2$ &
$2.334$ & $244$ & $2a + 16$ & $-2$ & $1.464$ \\ $97$ & $8a + 17$ & $2$ & $2.322$
		& $249$ & $8a + 21$ & $-2$ & $1.449$ \\ $97$ & $-8a + 17$ & $-2$ &
$2.322$ & $249$ & $-8a + 21$ & $2$ & $1.449$ \\ $121$ & $11$ & $2$ & $2.079$ &
$276$ & $-10a + 24$ & $0$ & $0.000$ \\ $148$ & $6a + 16$ & $2$ & $1.880$ & $313$
	  & $4a + 19$ & $2$ & $1.293$
\\
\end{tabular}
\vspace{-1ex}
\caption{Fourier coefficients for $f_{\gamma_1}$ and their corresponding twisted
central values}
\label{tab:g1}
\end{table}

\vspace{-1ex}

\begin{table}[ht]
\begin{tabular}{cc|cc || cc|cc}
	$\mathcal N(D) $ & $D$ & $\lambda(D)$ & $L_D$
		&
	$\mathcal N(D) $ & $D$ & $\lambda(D)$ & $L_D$
	\\
\hline
$1$ & $-1$ & $2$ & $1.400$ & $241$ & $4a - 17$ & $-8$ & $1.443$ \\ $16$ & $4a -
8$ & $-4$ & $1.400$ & $241$ & $-4a - 17$ & $4$ & $0.361$ \\ $33$ & $4a - 9$ &
$4$ & $0.975$ & $276$ & $-10a - 24$ & $0$ & $0.000$ \\ $49$ & $-7$ & $-4$ &
$0.800$ & $289$ & $-17$ & $-4$ & $0.329$ \\ $52$ & $-2a - 8$ & $0$ & $0.000$ &
$313$ & $4a - 19$ & $8$ & $1.266$ \\ $73$ & $-4a - 11$ & $4$ & $0.655$ & $321$ &
$-16a - 33$ & $-4$ & $0.313$ \\ $177$ & $4a - 15$ & $12$ & $3.788$ & $337$ & $8a
- 23$ & $4$ & $0.305$ \\ $177$ & $-4a - 15$ & $4$ & $0.421$ & $337$ & $-8a - 23$
	  & $0$ & $0.000$ \\ $193$ & $-12a - 25$ & $-4$ & $0.403$ & $352$ & $-4a -
20$ & $-8$ & $1.194$ \\ $208$ & $-4a - 16$ & $8$ & $1.553$ & $352$ & $12a - 28$
	& $-16$ & $4.776$
\\
\end{tabular}
\vspace{-1ex}
\caption{Fourier coefficients for $f_{\gamma_2}$ and their corresponding twisted
central values}
\label{tab:g2}
\end{table}

\begin{table}[ht]
\begin{tabular}{cc|cc || cc|cc}
	$\mathcal N(D) $ & $D$ & $\lambda(D)$ & $L_D$
		&
	$\mathcal N(D) $ & $D$ & $\lambda(D)$ & $L_D$
	\\
\hline
$-23$ & $-4a + 5$ & $-4$ & $2.807$ & $-143$ & $-16a + 25$ & $-4$ & $1.126$ \\
$-32$ & $-4a + 4$ & $4$ & $2.380$ & $-167$ & $-8a + 5$ & $0$ & $0.000$ \\ $-32$
	  & $-12a + 20$ & $-4$ & $2.380$ & $-176$ & $-20a + 32$ & $8$ & $4.060$ \\
$-39$ & $-4a + 3$ & $-4$ & $2.156$ & $-183$ & $-8a - 3$ & $-4$ & $0.995$ \\
$-44$ & $-10a + 16$ & $0$ & $0.000$ & $-183$ & $-8a + 3$ & $-4$ & $0.995$ \\
$-71$ & $-12a + 19$ & $4$ & $1.598$ & $-300$ & $-10a$ & $8$ & $3.109$ \\ $-71$ &
$-8a + 11$ & $0$ & $0.000$ & $-311$ & $-12a + 11$ & $-8$ & $3.054$ \\ $-111$ &
$-8a + 9$ & $4$ & $1.278$ & $-327$ & $-16a + 21$ & $4$ & $0.745$ \\ $-111$ &
$-20a + 33$ & $-4$ & $1.278$ & $-332$ & $-14a + 16$ & $8$ & $2.956$ \\ $-143$ &
$-12a + 17$ & $8$ & $4.504$ & $-332$ & $-34a + 56$ & $-8$ & $2.956$
\\
\end{tabular}
\caption{Fourier coefficients for $f_{\gamma_3}$ and their corresponding twisted
central values}
\label{tab:g3}
\end{table}

\begin{table}[ht]
\begin{tabular}{cc|cc || cc|cc}
	$\mathcal N(D) $ & $D$ & $\lambda(D)$ & $L_D$
		&
	$\mathcal N(D) $ & $D$ & $\lambda(D)$ & $L_D$
	\\
\hline
$-12$ & $2a$ & $4$ & $2.747$ & $-191$ & $8a - 1$ & $12$ & $6.197$ \\ $-23$ & $4a
+ 5$ & $4$ & $1.984$ & $-191$ & $8a + 1$ & $-4$ & $0.689$ \\ $-39$ & $4a + 3$ &
$0$ & $0.000$ & $-236$ & $10a + 8$ & $4$ & $0.619$ \\ $-44$ & $10a + 16$ & $4$ &
$1.435$ & $-236$ & $38a + 64$ & $-4$ & $0.619$ \\ $-47$ & $4a + 1$ & $-4$ &
$1.388$ & $-239$ & $16a + 23$ & $-8$ & $2.462$ \\ $-48$ & $4a$ & $-4$ & $1.374$
		& $-239$ & $20a + 31$ & $0$ & $0.000$ \\ $-143$ & $8a + 7$ & $-4$ &
$0.796$ & $-263$ & $12a + 13$ & $-4$ & $0.587$ \\ $-143$ & $28a + 47$ & $-8$ &
$3.183$ & $-263$ & $32a + 53$ & $-8$ & $2.347$ \\ $-167$ & $8a + 5$ & $-8$ &
$2.945$ & $-311$ & $12a + 11$ & $-4$ & $0.540$ \\ $-176$ & $20a + 32$ & $-4$ &
$0.717$ & $-327$ & $16a + 21$ & $-8$ & $2.105$
\\
\end{tabular}
\caption{Fourier coefficients for $f_{\gamma_4}$ and their corresponding twisted
central values}
\label{tab:g4}
\end{table}

\end{document}